\newtheorem{theorem}{Theorem}
\newtheorem{lemma}[theorem]{Lemma}
\newtheorem{corollary}[theorem]{Corollary}
\newtheorem{proposition}[theorem]{Proposition}
\newtheorem{definition}[theorem]{Definition}
\title{Motions of a connected subgraph representing a swarm of robots inside a graph of work stations}
\author{Aar{\' o}n Atilano\footnotemark[3] \and Sebastian Bejos \footnotemark[3] \footnotemark[2] \and Christian Rubio-Montiel\footnotemark[3]}
\begin{document}
\maketitle

\def\thefootnote{\fnsymbol{footnote}}
\footnotetext[3]{Divisi{\' o}n de Matem{\' a}ticas e Ingenier{\' i}a, FES Acatl{\' a}n, Universidad Nacional Aut{\'o}noma de M{\' e}xico, Naucalpan, Mexico. {\tt aaron.atilano@comunidad.unam.mx, christian.rubio@acatlan.unam.mx}.}
\footnotetext[2]{Coordinaci{\' o}n de Ciencias Computacionales, Instituto Nacional de Astrof{\' i}sica, {\' O}ptica y Electr{\' o}nica, Puebla, Mexico. {\tt sebastian.bejos@inaoep.mx}.}

\begin{abstract} 
Imagine that a swarm of robots is given, these robots must communicate with each other, and they can do so if certain conditions are met. We say that the swarm is connected if there is at least one way to send a message between each pair of robots.  A robot can move from a work station to another only if the connectivity of the swarm is preserved in order to perform some tasks. We model the problem via graph theory, we study connected subgraphs and how to motion them inside a connected graph preserving the connectivity. We determine completely the group of movements.
\end{abstract}

\textbf{Keywords:} Edge-blocks, the Wilson group, motion planning, robots swarps, pebble motion

\textbf{Mathematics Subject Classifications}: 05C25, 05C40, 05E18, 94C15


\section{Introduction}

In this work we model the following problem. Imagine that a swarm of robots is given, these robots must communicate with each other, and they can do so if certain conditions are met
. We say that the swarm is connected if there is at least one way to send a message between each pair of robots. A message between robots can be sent if either there is a direct communication between them or if there are intermediate robots which can send the message. Some work stations in a region are also given, the number of work stations are at least the number of robots. A robot can move from one of these work stations to another only if the connectivity of the swarm is preserved. The swarm of robots has one fixed initial position and, in order to perform some tasks, the robots move from one station to another as needed, always maintaining the swarm connected. After a while the swarm of robots returns to its initial position. In order to achieve this goal it is not necessary that each robot returns to its initial position, we only care about the position of the whole swarm, so as long as each one of the initial positions are occupied and the swarm is connected, we say that it has returned to its original position.
Our intent in this paper is to study the different permutations that might appear once the swarm returns to its original position. In order to do so, we must also study the possible moves that the swarm can make, all moves must meet three conditions: 1. The connectivity of the swarm must be kept, 2. Only one robot is in each workstation at each time, 3. To avoid crashes, two robots are not allowed to swap positions.

We model the problem using a graph as follows. The work stations are represented by the vertices of a graph, two vertices are connected by an edge if their corresponding workstations allow a couple of robots, one in each workstation, to communicate with each other. Notice that the initial positions of the robots induce a unique subgraph of our workstations graph and that every time a robot moves this induced 
subgraph might change. Since we are interested only in the moves that assure the connectivity of the swarm, both the workstations graph and every induced
subgraph must be connected. Under this model the subgraph of robots moves through the workstations graph and we ask how the permutations of the initial subgraph look like.

Related problems have been studied from a different perspective in the area of motion planning under the names of ``robots swarm'' and ``pebble motion'', for example in \cite{CKLL} and \cite{MR4036097}.

A classical related problem is the well-known ``15-puzzle'' which was generalized to graphs by Wilson \cite{MR0332555}, who proved that for any nonseparable graph, except for one, its associate group is the symmetric group unless the graph is bipartite, for which it is the alternating group.

While Wilson considered just an empty workstation, the problem was generalized to $k$ empty workstations in \cite{kornhauser1984coordinating} where it was also given a polynomial time algorithm that decides reachability of a target configuration. In \cite{MR1822278} optimal algorithms for specific graphs were explored. Colored versions were studied in \cite{MR2889522} and \cite{MR2672474}. In \cite{RW} it was proven that finding a shortest solution for the extended puzzle is NP-hard and therefore it is computationally intractable. 

In the following section we define formally the problem. In Section \ref{section3}, we prove that the set of possible movements is a group, and we define what we call the Wilson group (in honor of Richard M. Wilson). In Section \ref{section4}, we characterize such group when there are no ``empty workstations''. Finally, in Section \ref{section5}, we characterize the group in the case when there is at least one ``empty workstation''.


\section{Definitions and basic results}\label{section2}

In this section we introduce definitions, terminology and basic results. All the graphs considered in the paper are finite and simple.

\begin{definition}
Let $G$ be a graph, $R$ a $k$-set and $f_t$ a function such that \[f_t\colon V(G)\rightarrow R\cup\{\emptyset\}.\]
We denote $f_t^{-1}(R)$ by $V_t$, and we say $f_t$ is an \emph{$R$-configuration over $G$} if $f_{t}\mid_{V_{t}}$ is bijective, where $t\in\Delta$ and $\Delta$ is a set of natural numbers.
\end{definition}

The elements of $R$ are called \emph{labels} and we use $R=[k]$ or a subset of $[k]$ for simplicity, where $[k]:=\{1,2,\dots,k\}$.

\begin{definition}
Let $G$ be a graph and $f_t$ a $[k]$-configuration over $G$. We say $f_t$ is a \emph{connected $[k]$-configuration over $G$} if the induced subgraph $G[V_t]$ of $G$ is connected.
\end{definition}

\begin{figure}[!htbp]
\begin{center}
\includegraphics{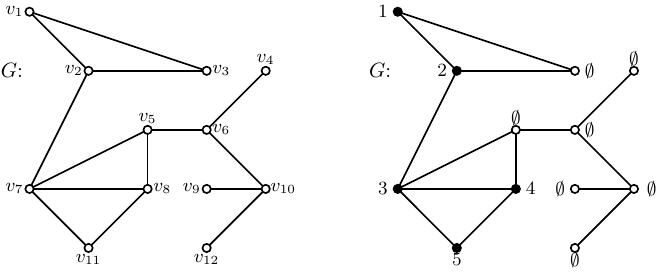}
\caption{(Left) A graph $G$. (Right) The graph $G$ labeled with a connected $[5]$-configuration $f_t$.}\label{fig01}
\end{center}
\end{figure}

If a vertex $v$ is such that $f_t(v)=\emptyset$, we say that it is \emph{empty}. The set of empty vertices is denoted by $V_\emptyset$. Figure \ref{fig01} shows an example of a connected $[k]$-configuration $f_t$ over a graph $G$, for $k=5$. We write \[f_t=\left(
	\begin{array}{@{\extracolsep{-2mm}}cccccccccccc}
	v_1&v_2&v_3&v_4&v_5&v_6&v_7&v_8&v_9&v_{10}&v_{11}&v_{12}\\
	1&2&\emptyset&\emptyset&\emptyset&\emptyset&3&4&\emptyset&\emptyset&5&\emptyset
	\end{array}\hspace{-2mm}\right).\]

Suppose that $f_t$ and $f_s$ are two connected $[k]$-configurations over a graph $G$. If $V_t=V_s$, for $f_t(v_j)=i$ and $f_s(v_j)=i'$ we have that $\sigma(i)=i'$ where $\sigma$ is a permutation of $V_t$.

\begin{definition}
Let $f_t$ and $f_s$ be two connected $[k]$-configurations over a graph $G$. If $V_t=V_s$ we say $f_t$ is \emph{similar} to $f_s$ and we denote it by $f_t \simeq f_s$.
\end{definition}

It is not hard to see that the relation $\simeq$ is an equivalence relation over the set of $[k]$-configurations over $G$. The equivalence class of $f_t$ is denoted by $[f_t]$. Therefore, a class $[f_t]$ is an unlabeled connected $[k]$-configuration $f_t$.

\subsection{Motioning connected subgraphs} 

In this subsection, we establish the rules to motion connected induced subgraphs preserving the connectivity.

\begin{definition}
Let $f_t$ be a $[k]$-connected configuration over a graph $G$. Let $w[f_t]$ be a function such that \[w[f_t]\colon V(G)\rightarrow \{0,1\}\]
for each $v\in V(G)$, $w[f_r](v)=1$ if $f_t(v)\in [k]$ and $w[f_r](v)=0$ otherwise.

The function $w[f_t]$ is called \emph{the weight function of $f_t$}.
\end{definition}

Clearly, if $f_t\simeq f_s$ then $w[f_t]=w[f_s]$. We recall that a cycle of a graph is denoted by $(v_1,v_2,\dots,v_r)$ where $v_1=v_r$. However, to keep our arguments as simple as possible, we choose to use $v_1\not = v_r$ and then $v_1$ is adjacent to $v_r$.

\begin{definition}
Let $f_t$ be a connected $[k]$-configuration over a graph $G$. An \emph{$r$-cycle} $p$ is a cycle $p=(v_1,v_2,\dots,v_r)$ such that $w[f_t](v_i)=1$, for all $i\in [r]$. And an \emph{$r$-path} $p$ is a path $p=(v_1,v_2,\dots,v_r)$ such that $w[f_t](v_i)=0$ if and only if $i=1$, that is, only the vertex $v_1$ has weight $0$.
\end{definition}

Now, we associate a permutation to an $r$-cycle or path. In this paper, the product of permutations means composition of functions on the left. For a detailed introduction on permutations we refer to the book of Rotman \cite{MR1307623}.

\begin{definition}
Let $f_t$ be a connected $[k]$-configuration over a graph $G$ and let $p$ be an $r$-cycle or a path $p=(v_1,v_2,\dots,v_r)$. An \emph{elementary $p$-movement} of $V_t$ is a permutation $\sigma_p$ such that
\[\sigma_p=(v_1v_2\dots v_r)=(v_1v_2)(v_2v_3)\dots(v_{r-1}v_r).\]  
\end{definition}

Hence, we can define configurations $f_s$ arising from a given configuration $f_t$.

\begin{definition}
Let $f_t$ be a connected $[k]$-configuration over a graph $G$ and $\sigma_p$ an elementary $p$-movement of $V_t$. The $[k]$-configuration $f_{t+1}=f_t\circ \sigma_p$ over $G$ is an \emph{elementary configuration movement} arising from $f_t$.
\end{definition}

Note that if $G$ is a tree such that $V(G)=V_t$ for a connected $[k]$-configuration then there is no elementary $p$-movements. And in general, it is possible that $G[V_{t+1}]$ is a disconnected subgraph.

\begin{definition}
Let $f_t$ be a connected $[k]$-configuration over a graph $G$ and $\sigma_p$ an elementary $p$-movement of $V_t$. If $f_{t+1}$ is an elementary configuration movement arising from $f_t$ such that it is a connected $[k]$-configuration then $f_{t+1}$ is called \emph{valid} as well as $\sigma_p$.
\end{definition}

Figure \ref{fig02} shows two valid elementary $p$-movements, namely, $\sigma_{p}=(v_5v_8v_7v_2v_1)$ and $\sigma_{p'}=(v_7v_8v_{11})$, therefore $f_{t+1}=f_t\circ \sigma_{p}$ and $f'_{t+1}=f_t\circ \sigma_{p'}$ where

\[f_{t+1}=\left(
	\begin{array}{@{\extracolsep{-2mm}}cccccccccccc}
	v_1&v_2&v_3&v_4&v_5&v_6&v_7&v_8&v_9&v_{10}&v_{11}&v_{12}\\
	\emptyset&1&\emptyset&\emptyset&4&\emptyset&2&3&\emptyset&\emptyset&5&\emptyset
	\end{array}\hspace{-2mm}\right)\text{ and }\]
\[f'_{t+1}=\left(
	\begin{array}{@{\extracolsep{-2mm}}cccccccccccc}
	v_1&v_2&v_3&v_4&v_5&v_6&v_7&v_8&v_9&v_{10}&v_{11}&v_{12}\\
	1&2&\emptyset&\emptyset&\emptyset&\emptyset&4&5&\emptyset&\emptyset&3&\emptyset
	\end{array}\hspace{-2mm}\right).\]

\begin{figure}[!htbp]
\begin{center}
\includegraphics{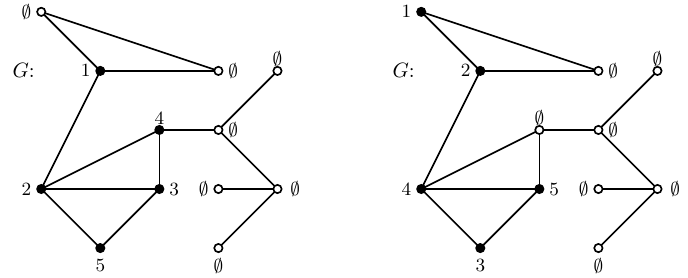}
\caption{(Left) The graph $G$ labeled with a connected $[5]$-configuration $f_{t+1}$. (Right) The graph $G$ labeled with a connected $[5]$-configuration $f'_{t+1}$.}\label{fig02}
\end{center}
\end{figure}

An example of an invalid elementary $p$-movement is $\sigma_{p''}=(v_3v_2v_7)$. 

Since we obtain a connected $[k]$-configuration $f_{t+1}$ from a connected $[k]$-configuration $f_{t}$ via a valid elementary $p$-movement, we have the following proposition.

\begin{proposition}\label{prop9}
A $\sigma_p$ is a valid elementary $p$-movement of $V_t$  if and only if $\sigma^{-1}_p$ is a valid elementary $p$-movement of $V_{t+1}.$
\end{proposition}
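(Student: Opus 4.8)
The plan is to reduce everything to the single algebraic identity $f_{t+1}\circ\sigma_p^{-1}=f_t$, which holds because $f_{t+1}=f_t\circ\sigma_p$ by definition and $\sigma_p\sigma_p^{-1}$ is the identity permutation. This says that applying the inverse movement to $f_{t+1}$ recovers exactly the original configuration $f_t$. Since $f_t$ is connected by hypothesis (being a connected $[k]$-configuration is built into the definition of an elementary $p$-movement of $V_t$), the configuration produced by $\sigma_p^{-1}$ out of $f_{t+1}$ is automatically connected, and the whole question reduces to whether $\sigma_p^{-1}$ is a bona fide elementary movement of $V_{t+1}$ at all.

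First I would make precise the sense in which $\sigma_p^{-1}$ is an elementary movement. Writing $p=(v_1,v_2,\dots,v_r)$, the inverse permutation is $\sigma_p^{-1}=(v_rv_{r-1}\dots v_1)=\sigma_{p'}$, where $p'=(v_r,v_{r-1},\dots,v_1)$ is the reversal of $p$ in $G$. I would then check that $p'$ has the correct type with respect to $f_{t+1}$. If $p$ is an $r$-cycle, then every $v_i$ has weight $1$ under $f_t$; a cycle movement only permutes labels among occupied vertices, so $V_{t+1}=V_t$ and every $v_i$ still has weight $1$ under $f_{t+1}$, whence the reversed cycle $p'$ is again an $r$-cycle. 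If $p$ is an $r$-path, then $v_1$ is the unique empty vertex, and computing $f_{t+1}=f_t\circ\sigma_p$ vertex by vertex shows that the empty vertex is shifted to $v_r$ while $v_1,\dots,v_{r-1}$ become occupied; hence $v_r$ is the unique vertex of weight $0$ along $p'$, so $p'$ is an $r$-path with respect to $f_{t+1}$. In either case $\sigma_p^{-1}=\sigma_{p'}$ is an elementary movement of $V_{t+1}$.

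With this in hand the two implications are short. For the forward direction, assume $\sigma_p$ is valid, so $f_{t+1}$ is a connected $[k]$-configuration and $V_{t+1}$ is genuine; by the previous paragraph $\sigma_p^{-1}$ is an elementary movement of $V_{t+1}$, and the configuration it produces is $f_{t+1}\circ\sigma_p^{-1}=f_t$, which is connected, so $\sigma_p^{-1}$ is valid. For the converse, assuming $\sigma_p^{-1}$ is a valid elementary movement of $V_{t+1}$ already forces $f_{t+1}$ to be a connected configuration, which is exactly the assertion that $\sigma_p$ is valid; the same weight bookkeeping, now read from $f_{t+1}$ back to $f_t$ via $f_t=f_{t+1}\circ\sigma_p^{-1}$, confirms that $p$ is of the correct type with respect to $f_t$.

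I expect the only real obstacle to be the weight bookkeeping in the path case: one must track carefully that $\sigma_p=(v_1v_2\dots v_r)$ carries the single empty vertex from $v_1$ to $v_r$, so that the reversed path $p'$ indeed begins at the new empty vertex and satisfies the defining weight condition of an $r$-path. Everything else follows formally from the identity $f_{t+1}\circ\sigma_p^{-1}=f_t$ together with the standing assumption that $f_t$ is connected.
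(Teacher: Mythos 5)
Your proof is correct and follows the same idea the paper relies on: the paper states this proposition without a detailed proof, justifying it only by the remark that a valid movement carries the connected configuration $f_t$ to the connected configuration $f_{t+1}$, which is precisely your identity $f_{t+1}\circ\sigma_p^{-1}=f_t$ read in both directions. Your additional bookkeeping (the reversed path $p'$ is an $r$-path for $f_{t+1}$ because the empty vertex moves from $v_1$ to $v_r$, and the reversed cycle is again an $r$-cycle) correctly fills in the details the paper leaves implicit.
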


Consider the set of empty vertices $V_\emptyset$. For any permutation $\sigma$ in the symmetric group $S_{V_\emptyset}$ of $V_\emptyset$ we have that $f_t\circ \sigma=f_t$, therefore, $\sigma$ is a valid elementary $p$-movement.
Given a connected $[k]$-configuration $f_t$, we denote the set of valid elementary $p$-movements of $V_t$ as $\Gamma[f_t]$. Therefore $S_{V_\emptyset}\subseteq\Gamma[f_t]$.

\begin{proposition}
If $f_t\simeq f_s$ then $\Gamma[f_t]=\Gamma[f_s]$ and $f_t\circ \sigma\simeq f_s\circ \sigma$ for any $\sigma\in \Gamma[f_t]$.
\end{proposition}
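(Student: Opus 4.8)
The plan is to reduce everything to a single observation about how the non-empty vertex set transforms under an elementary movement, and then to exploit the fact that similarity, validity, and the very notion of $r$-cycle or $r$-path all depend on a configuration only through its non-empty vertex set. Recall that $f_t \simeq f_s$ means precisely $V_t = V_s$, and that (as already noted in the excerpt) this forces $w[f_t] = w[f_s]$.

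First I would establish the key identity $V_{t+1} = \sigma_p^{-1}(V_t)$ for any elementary $p$-movement $\sigma_p$. Indeed, since $f_{t+1} = f_t \circ \sigma_p$, for every vertex $v$ we have $f_{t+1}(v) = f_t(\sigma_p(v))$, so $v \in V_{t+1}$ if and only if $\sigma_p(v) \in V_t$, that is, $V_{t+1} = \sigma_p^{-1}(V_t)$. The crucial feature of this identity is that its right-hand side depends only on $V_t$ and on $\sigma_p$, and not on the particular labeling $f_t$.

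Next I would argue $\Gamma[f_t] = \Gamma[f_s]$ in two steps. A permutation qualifies as an elementary $p$-movement through the notion of $r$-cycle or $r$-path, which by definition is stated purely in terms of the weight function; since $w[f_t] = w[f_s]$, exactly the same cycles and paths $p$ produce elementary $p$-movements for $f_t$ and for $f_s$, and the inclusion $S_{V_\emptyset}$ is likewise common to both. It remains to compare validity. By definition $\sigma_p \in \Gamma[f_t]$ if and only if $G[V_{t+1}]$ is connected; by the key identity $V_{t+1} = \sigma_p^{-1}(V_t) = \sigma_p^{-1}(V_s) = V_{s+1}$, so $G[V_{t+1}]$ and $G[V_{s+1}]$ are literally the same induced subgraph. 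Hence $\sigma_p$ is valid for $f_t$ exactly when it is valid for $f_s$, giving $\Gamma[f_t] = \Gamma[f_s]$.

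Finally, for the similarity claim, take any $\sigma \in \Gamma[f_t] = \Gamma[f_s]$. Applying the key identity to both configurations shows that $f_t \circ \sigma$ and $f_s \circ \sigma$ share the same non-empty vertex set $\sigma^{-1}(V_t) = \sigma^{-1}(V_s)$; since validity of $\sigma$ makes both of them connected $[k]$-configurations, they are similar, i.e. $f_t \circ \sigma \simeq f_s \circ \sigma$. The whole argument is essentially bookkeeping, and the only place demanding genuine care is the preimage computation under the left-composition convention, together with verifying that the $r$-cycle and $r$-path definitions — and hence membership in the candidate pool of movements — are functions of the weight alone; once that is secured, every remaining step is immediate.
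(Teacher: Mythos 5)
Your proof is correct: the identity $V_{t+1}=\sigma_p^{-1}(V_t)$, together with the observations that the notions of $r$-cycle, $r$-path and validity depend on $f_t$ only through $V_t$ (equivalently through $w[f_t]$), is exactly the verification needed. The paper states this proposition without proof, treating it as immediate from the definitions, and your argument supplies precisely the routine bookkeeping it omits.
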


By Proposition \ref{prop9} we have the following.

\begin{proposition}
$\sigma\in\Gamma[f_t]
$ if and only if $\sigma^{-1}\in\Gamma[f_t\circ \sigma]$.
\end{proposition}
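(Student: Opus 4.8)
The plan is to read this statement as a repackaging of Proposition~\ref{prop9} in the language of the sets $\Gamma[\cdot]$, so the whole argument hinges on matching the notation correctly. First I would record the key identification: if $\sigma\in\Gamma[f_t]$, then by definition $\sigma$ is a \emph{valid} elementary $p$-movement of $V_t$, which means $f_{t+1}:=f_t\circ\sigma$ is again a connected $[k]$-configuration. Hence $\Gamma[f_t\circ\sigma]=\Gamma[f_{t+1}]$ is well defined, and membership in it is exactly what Proposition~\ref{prop9} concerns.

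For the forward implication I would take $\sigma\in\Gamma[f_t]$, put $f_{t+1}=f_t\circ\sigma$, and apply Proposition~\ref{prop9}: since $\sigma$ is a valid elementary $p$-movement of $V_t$, its inverse $\sigma^{-1}$ is a valid elementary $p$-movement of $V_{t+1}$, that is, $\sigma^{-1}\in\Gamma[f_{t+1}]=\Gamma[f_t\circ\sigma]$. The converse is the same argument run backwards: the hypothesis $\sigma^{-1}\in\Gamma[f_t\circ\sigma]$ already presupposes that $f_t\circ\sigma$ is a connected configuration, so I would apply Proposition~\ref{prop9} to $f_t\circ\sigma$ with the movement $\sigma^{-1}$ and conclude that $(\sigma^{-1})^{-1}=\sigma$ is valid for $(f_t\circ\sigma)\circ\sigma^{-1}=f_t$, i.e. $\sigma\in\Gamma[f_t]$.

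The only point needing care---and the closest thing to an obstacle---is that $\Gamma[f_t]$ also contains the permutations of $S_{V_\emptyset}$, which do not arise from an $r$-cycle or $r$-path and so are not literally covered by Proposition~\ref{prop9}. For these I would argue separately but trivially: if $\sigma\in S_{V_\emptyset}$ then $f_t\circ\sigma=f_t$, so $\Gamma[f_t\circ\sigma]=\Gamma[f_t]$, and since $S_{V_\emptyset}$ is a subgroup it is closed under inverses, whence $\sigma^{-1}\in S_{V_\emptyset}\subseteq\Gamma[f_t]=\Gamma[f_t\circ\sigma]$, settling both directions at once. Beyond this bookkeeping I expect no real difficulty, since all the substance is already contained in Proposition~\ref{prop9}; what it supplies is precisely the fact that the reversed cycle or path attached to $\sigma_p^{-1}$ is again an elementary movement of the updated configuration $f_{t+1}$ (the emptied vertex having moved from $v_1$ to $v_r$).
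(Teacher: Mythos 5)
Your proposal is correct and follows exactly the paper's route: the paper derives this proposition directly from Proposition~\ref{prop9}, with no further argument. Your extra care in handling the permutations of $S_{V_\emptyset}$ separately is a reasonable bit of bookkeeping the paper leaves implicit, but the substance is identical.
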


Next, we define a valid sequence of connected configurations.

\begin{definition}
Let $\{f_0,f_1,\dots,f_t\}$ be a set of connected $[k]$-configuration over a graph $G$. We say that it is a \emph{valid $f_0f_t$-sequence} if $f_s$ is a valid $[k]$-configuration arising from $f_{s-1}$ for all $s\in [t]$.
\end{definition}

Figure \ref{fig03} shows an example of a valid $f_0f_2$-sequence of a graph $H$ for 
\[f_0=\left(
	\begin{array}{@{\extracolsep{-2mm}}ccccccc}
	v_1&v_2&v_3&v_4&v_5&v_6&v_7\\
	\emptyset&1&\emptyset&2&3&\emptyset&\emptyset
	\end{array}\hspace{-2mm}\right).\]
Taking a $3$-path $p_1=(v_3,v_4,v_5)$ and the permutation $\sigma_{p_1}=(v_3v_4v_5)$ we get $f_1=f_0\circ \sigma_{p_1}$ obtaining $f_1=\left(
	\begin{array}{@{\extracolsep{-2mm}}ccccccc}
	v_1&v_2&v_3&v_4&v_5&v_6&v_7\\
	\emptyset&1&2&3&\emptyset&\emptyset&\emptyset
	\end{array}\hspace{-2mm}\right).$
Then, taking a $4$-path $p_1=(v_6,v_3,v_4,v_2)$ and the permutation $\sigma_{p_2}=(v_6v_3v_4v_2)$ we get $f_2=f_1\circ \sigma_{p_2}$ obtaining $f_2=\left(
	\begin{array}{@{\extracolsep{-2mm}}ccccccc}
	v_1&v_2&v_3&v_4&v_5&v_6&v_7\\
	\emptyset&\emptyset&3&1&\emptyset&2&\emptyset
	\end{array}\hspace{-2mm}\right).$

\begin{figure}[!htbp]
\begin{center}
\includegraphics{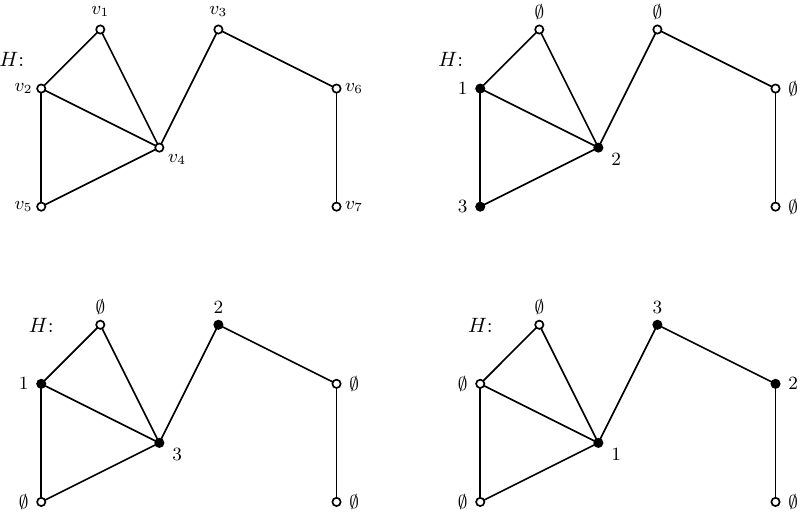}
\caption{(Left-top) A graph $H$. (Right-top) The graph $H$ labeled with a connected $[3]$-configuration $f_0$. (Left-bottom) The graph $H$ labeled with a connected $[3]$-configuration $f_1$. (Right-bottom) The graph $H$ labeled with a connected $[3]$-configuration $f_2$.}\label{fig03}
\end{center}
\end{figure}

Therefore, $f_2=f_0\circ \sigma_{p_1} \circ \sigma_{p_2}$, i.e., $f_2=f_0\circ \sigma$ where $\sigma=\sigma_{p_1} \circ \sigma_{p_2}$. In general, if $\{f_0,f_1,\dots,f_t\}$ is a set of $[k]$-configuration over a graph $G$, hence $f_i=f_{i-1}\circ \sigma_{p_i}$ for some $\sigma_{p_i}\in \Gamma[f_{i-1}]$ with $i\in [t]$ and then $f_t=f_0\circ\sigma$ for $\sigma=\sigma_{p_1} \circ \sigma_{p_2}\circ\dots\circ\sigma_{p_t}$.

\begin{definition}
Let $f_0$ and $f_t$ be two connected $[k]$-configurations over a graph $G$. If there exists a valid $f_0f_t$-sequence $\{f_0,f_1,\dots,f_t\}$ for which $f_i=f_{i-1}\circ \sigma_{p_i}$ for some $\sigma_{p_i}\in \Gamma[f_{i-1}]$ with $i\in [t]$, the permutation $\sigma=\sigma_{p_1} \circ \sigma_{p_2}\circ\dots\circ\sigma_{p_t}$ is called a \emph{valid movement} of $V_0$ having that $f_t=f_0\circ\sigma$.
\end{definition}

In a natural way we have the following two propositions.

\begin{proposition}\label{prop14}
A permutation $\sigma$ is a valid movement of $V_0$ if and only if $\sigma^{-1}$ is a valid movement of $V_{t}.$
\end{proposition}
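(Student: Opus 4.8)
The plan is to prove Proposition \ref{prop14} by unwinding the definition of a valid movement and reversing the underlying valid sequence step by step, relying on Proposition \ref{prop9}. Suppose $\sigma$ is a valid movement of $V_0$. Then there is a valid $f_0f_t$-sequence $\{f_0,f_1,\dots,f_t\}$ with $f_i=f_{i-1}\circ\sigma_{p_i}$ for valid elementary $p$-movements $\sigma_{p_i}\in\Gamma[f_{i-1}]$, and $\sigma=\sigma_{p_1}\circ\sigma_{p_2}\circ\dots\circ\sigma_{p_t}$ with $f_t=f_0\circ\sigma$. The idea is to run this sequence backwards. Consider the reversed list $\{f_t,f_{t-1},\dots,f_0\}$; I will show it is itself a valid sequence whose composed permutation is exactly $\sigma^{-1}$.

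First I would verify each reversed step is valid. For each $i\in[t]$ we have $f_i=f_{i-1}\circ\sigma_{p_i}$, so $f_{i-1}=f_i\circ\sigma_{p_i}^{-1}$. By Proposition \ref{prop9}, since $\sigma_{p_i}$ is a valid elementary $p$-movement of $V_{i-1}$, its inverse $\sigma_{p_i}^{-1}$ is a valid elementary $p$-movement of $V_i$; equivalently, $\sigma_{p_i}^{-1}\in\Gamma[f_i]$. Hence each transition $f_i\to f_{i-1}$ obtained by applying $\sigma_{p_i}^{-1}$ is a valid elementary configuration movement, and the reversed collection is a valid $f_tf_0$-sequence.

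Next I would compute the composed permutation of the reversed sequence. Running backward, the permutations applied in order are $\sigma_{p_t}^{-1},\sigma_{p_{t-1}}^{-1},\dots,\sigma_{p_1}^{-1}$, so the associated valid movement of $V_t$ is
\[
\tau=\sigma_{p_t}^{-1}\circ\sigma_{p_{t-1}}^{-1}\circ\dots\circ\sigma_{p_1}^{-1}
=\bigl(\sigma_{p_1}\circ\sigma_{p_2}\circ\dots\circ\sigma_{p_t}\bigr)^{-1}=\sigma^{-1},
\]
using the standard fact that the inverse of a product reverses the order of the factors. Consistency with $f_t=f_0\circ\sigma$ then gives $f_0=f_t\circ\sigma^{-1}=f_t\circ\tau$, so $\sigma^{-1}$ is indeed a valid movement of $V_t$. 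The converse direction is symmetric: applying the same argument to $\sigma^{-1}$ as a valid movement of $V_t$ recovers $\sigma=(\sigma^{-1})^{-1}$ as a valid movement of $V_0$, which closes the equivalence.

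The main obstacle I anticipate is bookkeeping rather than conceptual: I must be careful that the convention for composing permutations (on the left, as the paper fixes) matches the order in which the elementary movements are applied along the sequence, so that the reversed sequence genuinely yields $\sigma^{-1}$ and not some conjugate or mis-ordered product. The only real content is Proposition \ref{prop9}, which guarantees that reversing a single valid step stays valid; once that is in hand, the extension to a full sequence is an immediate induction on $t$, and the algebraic identity $(\sigma_{p_1}\circ\dots\circ\sigma_{p_t})^{-1}=\sigma_{p_t}^{-1}\circ\dots\circ\sigma_{p_1}^{-1}$ finishes the argument.
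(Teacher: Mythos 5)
Your proof is correct and is precisely the argument the paper leaves implicit (it states Proposition \ref{prop14} without proof, remarking only that it holds ``in a natural way''): reverse the valid sequence, apply Proposition \ref{prop9} to each step, and note that the composed permutation of the reversed sequence is $\sigma_{p_t}^{-1}\circ\dots\circ\sigma_{p_1}^{-1}=\sigma^{-1}$. Your bookkeeping of the left-composition convention is consistent with the paper's identity $f_t=f_0\circ\sigma_{p_1}\circ\dots\circ\sigma_{p_t}$, so nothing is missing.
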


\begin{proposition}\label{prop15}
If $f_t\simeq f_s$ then $f_t\circ \sigma\simeq f_s\circ \sigma$ for any valid movement $\sigma$ of $V_t$.
\end{proposition}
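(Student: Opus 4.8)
The plan is to induct on the length $m$ of the valid movement $\sigma$, using as the base case the elementary proposition already established, namely that $f_t \simeq f_s$ implies $\Gamma[f_t] = \Gamma[f_s]$ and $f_t \circ \rho \simeq f_s \circ \rho$ for every $\rho \in \Gamma[f_t]$. Recall that a valid movement of $V_t$ is, by definition, a composition $\sigma = \sigma_{p_1} \circ \sigma_{p_2} \circ \cdots \circ \sigma_{p_m}$ read off from a valid sequence $\{f_t = g_0, g_1, \dots, g_m\}$ with $g_i = g_{i-1} \circ \sigma_{p_i}$ and $\sigma_{p_i} \in \Gamma[g_{i-1}]$; the number $m$ of elementary factors is the length on which I induct. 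The case $m = 0$ is trivial, since then $\sigma$ is the identity, and the case $m = 1$ is exactly the elementary proposition.

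For the inductive step I would split off the first elementary factor, writing $\sigma = \sigma_{p_1} \circ \tau$ with $\tau = \sigma_{p_2} \circ \cdots \circ \sigma_{p_m}$, and set $g = f_t \circ \sigma_{p_1}$ and $h = f_s \circ \sigma_{p_1}$. Because $f_t \simeq f_s$ gives $\Gamma[f_t] = \Gamma[f_s]$, the movement $\sigma_{p_1}$ also lies in $\Gamma[f_s]$, so $h$ is again a connected $[k]$-configuration, and the elementary proposition gives $g \simeq h$. The tail of the valid sequence, $\{g_1, \dots, g_m\}$, witnesses that $\tau$ is a valid movement of $V_{g_1} = V_g$, so the induction hypothesis applies to the pair $g \simeq h$ together with the length-$(m-1)$ movement $\tau$, yielding $g \circ \tau \simeq h \circ \tau$. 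Since $g \circ \tau = f_t \circ \sigma_{p_1} \circ \tau = f_t \circ \sigma$ and likewise $h \circ \tau = f_s \circ \sigma$, this is precisely the desired conclusion $f_t \circ \sigma \simeq f_s \circ \sigma$.

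The engine behind every step is the observation that the occupied set of a composite $f \circ \rho$ is $\rho^{-1}(V(f))$, since a vertex $v$ is occupied by $f \circ \rho$ exactly when $\rho(v) \in f^{-1}([k])$. In particular this set depends only on $V(f)$ and on $\rho$, never on the actual labels, which is simultaneously why $\Gamma[\cdot]$ is determined by the occupied set alone and why similarity is preserved. One could even try to collapse the whole argument into this single identity: from $V_t = V_s$ one gets $\sigma^{-1}(V_t) = \sigma^{-1}(V_s)$, so $f_t \circ \sigma$ and $f_s \circ \sigma$ occupy the same vertices.

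I expect the only real obstacle to be bookkeeping rather than mathematics: one must check that $h = f_s \circ \sigma_{p_1}$ is a bona fide connected configuration, so that $\simeq$ is even defined on it, and that the tail $\tau$ qualifies as a genuine valid movement of $V_g$ rather than merely an abstract permutation. Both points rest on the fact that validity of each elementary step is governed by the weight function, i.e. by the occupied set, so that $V_t = V_s$ keeps the two evolutions in lockstep at every stage. Handling these definedness issues cleanly is exactly what makes the inductive framing, with the elementary proposition as base case, preferable to the one-line set computation.
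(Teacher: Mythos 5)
The paper states Proposition \ref{prop15} without proof, offering it as following ``in a natural way,'' and your argument is the correct and complete unpacking of that claim: the induction on the number of elementary factors, anchored in the identity $V(f\circ\rho)=\rho^{-1}(V(f))$ and the already-established fact that $f_t\simeq f_s$ forces $\Gamma[f_t]=\Gamma[f_s]$, is exactly the intended justification. Nothing is missing; in particular you are right that the only genuine content is checking that $f_s\circ\sigma$ remains a valid connected configuration at every intermediate stage, which your lockstep observation handles.
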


To end this section, we have the following theorem about the classes $[f_t]$.

\begin{theorem}\label{teo16}
Let $[f_t]$ and $[f_s]$ be two unlabeled connected $[k]$-configurations over a graph $G$. Then there exists a valid $f_tf_{t+r}$-sequence $\{f_t,f_{t+1},\dots,f_{t+r}\}$ for which $f_{t+r}\simeq f_s$.
\end{theorem}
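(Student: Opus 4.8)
The plan is to prove a stronger reachability statement: any connected $[k]$-configuration can be transported, through valid movements, to a single fixed canonical position, after which Proposition~\ref{prop14} finishes the job. Since $\simeq$ only records the vertex set $V_t=f_t^{-1}([k])$, it suffices to exhibit, for arbitrary connected $k$-subsets $V_t,V_s$ of $V(G)$, a valid sequence whose final configuration occupies exactly $V_s$. If $V_t=V_s$ the empty sequence works, so assume $V_t\neq V_s$; then $|V(G)|>k$, i.e. there is at least one empty vertex. Fix a depth-first spanning tree $T$ of the connected graph $G$ with vertex order $u_1,\dots,u_n$, and set $V^\ast=\{u_1,\dots,u_k\}$; being a DFS prefix, $V^\ast$ induces a connected subgraph. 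I would first reduce the theorem to the claim that \emph{every connected $k$-subset admits a valid movement onto $V^\ast$}: applying this to $V_t$ and to $V_s$ and reversing the second movement via Proposition~\ref{prop14} yields a valid movement from $V_t$ to $V_s$.

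The workhorse is a single ``crawl'' realized by one $r$-path movement. Given the current set $V_t$, an empty vertex $u\notin V_t$ adjacent to $V_t$, and a vertex $v_r\in V_t$ for which $(V_t\setminus\{v_r\})\cup\{u\}$ is connected, I choose a simple path in $G$ from $u$ into $V_t$ ending at $v_r$; as only $u$ has weight $0$, this is an $r$-path, and the associated $\sigma_p$ shifts every token one step and produces precisely the configuration on $(V_t\setminus\{v_r\})\cup\{u\}$. This set is connected by hypothesis, so the move is valid. Thus one valid move can simultaneously \emph{absorb} an adjacent empty vertex and \emph{release} any vertex whose removal, after the absorption, keeps the subgraph connected.

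To drive $V_t$ to $V^\ast$ I would use the potential $\Phi(V_t)=\sum_{v\in V_t}\mathrm{ind}(v)$, where $\mathrm{ind}$ is the DFS index; $\Phi$ is uniquely minimized by $V^\ast$. If $V_t\neq V^\ast$, let $u_j$ be the smallest-index vertex of $V^\ast\setminus V_t$; then $u_1,\dots,u_{j-1}\in V_t$, and since in a DFS tree the parent of $u_j$ has smaller index, $u_j$ is empty and adjacent to $V_t$. Put $W=V_t\cup\{u_j\}$, a connected set containing the whole prefix $\{u_1,\dots,u_j\}$ together with at least one vertex of index $>j$. The decisive step is to produce a non-cut vertex $v^\ast$ of $G[W]$ with $\mathrm{ind}(v^\ast)>j$; crawling $u_j$ in and $v^\ast$ out is then valid and lowers $\Phi$ by $\mathrm{ind}(v^\ast)-j>0$, so finitely many crawls reach $V^\ast$.

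Establishing the existence of such a $v^\ast$ is the main obstacle, and I would argue from the no-cross-edge property of DFS trees: every neighbor of a vertex lies on its root-path, so the maximum-index vertex $u_M$ of $W$ has all its $W$-neighbors among its ancestors. If $G[W]\setminus\{u_M\}$ is connected we take $v^\ast=u_M$. Otherwise $u_M$ is a cut vertex; the connected prefix lies in a single component, while any other component $C$ consists solely of vertices of index $>j$ and is joined to the rest of $W$ only through $u_M$. Recursing into the smaller connected graph $G[C\cup\{u_M\}]$, which has at least two non-cut vertices and hence one lying in $C$, produces a vertex $v^\ast\in C$ that is non-cut in $G[W]$ and has index $>j$, as required. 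Once this lemma is in place, the potential argument, the crawl construction, and Proposition~\ref{prop14} combine to give the theorem.
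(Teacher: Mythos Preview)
Your argument is correct and takes a genuinely different route from the paper's. The paper moves $V_t$ directly toward $V_s$ in two phases: first it shortens a $V_tV_s$-geodesic by pushing a leaf of a spanning tree $T_t$ of $G[V_t]$ toward the first vertex of the geodesic, and once the sets meet it enlarges a maximal connected piece of $V_t\cap V_s$ one vertex at a time, again always releasing a spanning-tree leaf. The elementary step is exactly your ``crawl'', but because the released vertex is always chosen as a leaf of a spanning tree of the current $G[V_t]$, no separate non-cut-vertex lemma is required. Your approach instead normalizes everything to a canonical DFS prefix $V^\ast$ via the potential $\Phi(V_t)$; this buys a clean monotone termination argument and a target independent of $f_s$, at the price of the auxiliary lemma that $W=V_t\cup\{u_j\}$ contains a non-cut vertex of index greater than $j$. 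Two small remarks on that lemma: the sentence ``every neighbor of a vertex lies on its root-path'' is not literally true (neighbors may also be descendants)---what you actually use is that the $W$-neighbors of the maximum-index vertex $u_M$ are ancestors, and in fact your treatment of the component $C$ needs only that the prefix $\{u_1,\dots,u_j\}$ is connected and avoids $u_M$; and the assertion that a non-cut vertex $v^\ast\in C$ of $G[C\cup\{u_M\}]$ is automatically non-cut in $G[W]$ deserves one line of justification: removing $v^\ast$ leaves $(C\cup\{u_M\})\setminus\{v^\ast\}$ connected and still containing $u_M$, through which every other component of $G[W\setminus\{u_M\}]$ remains attached.
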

\begin{proof}
Let $T_t$ and $T_s$ spanning trees be of $G[V_t]$ and $G[V_s]$, respectively. And let $P=(x_0,x_1,\dots,x_r)$ be a $T_tT_s$-geodesic. If the length $r$ of $P$ is positive, then $w[f_t(x_1)]=0$. Take a leaf $y$ of $T_t$ contained in $V_t\setminus \{x_0\}$ and let $p_1=(x_1,x_0,\dots,y',y)$ a path containing the $x_0y$-path of $T_t$. Therefore, $\sigma_{p_1}$ is a valid elementary $p_1$-movement of $V_t$. The tree $T_{t+1}$ with vertex set $V_{t+1}=V_t\cup\{x_1\}\setminus\{y\}$ and edge set $E(T_t)\cup\{x_1x_0\}\setminus\{yy'\}$ is a spanning tree of $G[V_{t+1}]$ with $P'=(x_1,\dots,x_r)$ a $T_{t+1}T_s$-geodesic shorter than $P$.

Now, we can assume that $r=0$. Consider a maximal component $T$ of $G[V_t\cap V_s]$ and take spanning trees $T_t$ and $T_s$ of $G[V_t]$ and $G[V_s]$, respectively, such that $T$ is a subgraph of them. If $T_t=T_s$ then $V_t=V_s$ and then $f_t\simeq f_s$. We can assume that there exist a leaf $y$ of $T_t$ contained in $V_t\setminus V(T)$ and then there exist a vertex $x$ in $V_s\setminus V(T)$ such that $xy''$ is an edge of $T_s$ and $y''$ is a vertex of $T$. Let $q_1=(x,y'',\dots,y',y)$ the path of $T_t\cup T_s$. Therefore, $\sigma_{q_1}$ is a valid elementary $q_1$-movement of $V_t$. The tree $T_{t+1}$ with vertex set $V_{t+1}=V_t\cup\{x\}\setminus\{y\}$ and edge set $E(T_t)\cup\{xy''\}\setminus\{yy'\}$ is a spanning tree of $G[V_{t+1}]$ and the maximal component $T'$ of $G[V_{t+1}\cap V_s]$ has order largest than $T$. Since the graph is finite, the result follows. 
\end{proof}

\begin{corollary} \label{cor17}
Let $[f_t]$ and $[f_s]$ be two unlabeled connected $[k]$-configurations over a graph $G$. Then there exists a valid movement $\sigma$ from $V_t$ to $V_s$.

\end{corollary}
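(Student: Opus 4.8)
The plan is to invoke Theorem \ref{teo16} directly and then translate its conclusion, which is phrased in the language of valid sequences, into the language of valid movements. First I would apply Theorem \ref{teo16} to the given unlabeled configurations $[f_t]$ and $[f_s]$, obtaining a valid $f_tf_{t+r}$-sequence $\{f_t, f_{t+1}, \dots, f_{t+r}\}$ with $f_{t+r} \simeq f_s$. By the definition of a valid sequence, each step has the form $f_{t+i} = f_{t+i-1} \circ \sigma_{p_i}$ for some valid elementary $p_i$-movement $\sigma_{p_i} \in \Gamma[f_{t+i-1}]$, with $i \in [r]$.

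Next I would set $\sigma = \sigma_{p_1} \circ \sigma_{p_2} \circ \cdots \circ \sigma_{p_r}$, exactly as in the discussion preceding the definition of a valid movement, so that $f_{t+r} = f_t \circ \sigma$ and $\sigma$ is by definition a valid movement of $V_t$. Finally, since $f_{t+r} \simeq f_s$ means precisely that $V_{t+r} = V_s$, the movement $\sigma$ carries the support $V_t$ onto the support $V_s$; that is, $\sigma$ is the desired valid movement from $V_t$ to $V_s$, which is all the corollary asserts at the level of unlabeled configurations.

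Because Theorem \ref{teo16} already carries out the entire combinatorial argument—the geodesic-shortening step between spanning trees followed by the component-growing step—there is essentially no obstacle here. The only point that requires care is verifying that the theorem's conclusion matches the definition of a valid movement, namely that a valid sequence terminating in a configuration similar to $f_s$ is exactly the data packaged into a valid movement of $V_t$. The equivalence $f_{t+r} \simeq f_s$ is what guarantees that the two endpoints agree on their supports, and hence as unlabeled configurations, so no further work is needed beyond reading off $\sigma$ from the sequence supplied by Theorem \ref{teo16}.
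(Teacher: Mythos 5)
Your proposal is correct and follows exactly the route the paper intends: the corollary is stated as an immediate consequence of Theorem \ref{teo16}, obtained by composing the elementary movements of the valid sequence into a single valid movement $\sigma$ with $f_{t+r}=f_t\circ\sigma$ and $V_{t+r}=V_s$. The paper gives no separate proof, and your unpacking of the definitions is precisely the missing bookkeeping.
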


\section{The Wilson group}\label{section3}

Given two connected $[k]$-configurations over a graph $G$, by Theorem \ref{teo16} and Corollary \ref{cor17}, we know that we can move the first one to the second one via connected subgraphs. In this section, we prove a similar result but considering the case when the labels are sorted.

Firstly, we define the following interesting set $\Phi$ regarding to valid movements. 
\begin{definition}
Let $f_t$ be a connected $[k]$-configuration over a graph $G$. The \emph{Wilson set} $\Phi[f_t]$ is the set of valid movements of $V_t$ such that $f_t=f_t\circ \sigma$ for all $\sigma\in\Phi[f_t]$.
\end{definition}

Clearly, the symmetric group $S_{V_\emptyset}$ of $V_\emptyset$ is a subset of $\Phi[f_t]$. The following proposition establishes that the Wilson set is independent to the labels of $f_t$.

\begin{proposition}
If $f_t\simeq f_s$ then $\Phi[f_t]=\Phi[f_s]$. In particular, if $\sigma \in \Phi[f_t]$ then $\Phi[f_t]=\Phi[f_t\circ\sigma]$.
\end{proposition}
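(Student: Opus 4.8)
The plan is to observe that membership of a permutation $\sigma$ in $\Phi[f_t]$ is governed by two conditions, namely that $\sigma$ be a valid movement of $V_t$ and that $f_t\circ\sigma=f_t$, and to show that each of them is determined by the occupied set $V_t$ alone. Since $f_t\simeq f_s$ means $V_s=V_t$, both $\Phi[f_t]$ and $\Phi[f_s]$ are then cut out of the permutations of $V(G)$ by identical conditions, so they coincide.

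First I would show that $\sigma$ is a valid movement of $V_t$ if and only if it is a valid movement of $V_s$. Suppose $\sigma$ is a valid movement of $V_t$, realized by a valid sequence $\{f_t=g_0,g_1,\dots,g_r\}$ with $g_i=g_{i-1}\circ\sigma_{p_i}$ and $\sigma=\sigma_{p_1}\circ\cdots\circ\sigma_{p_r}$. I would build a mirrored sequence starting from $f_s$ by the \emph{same} elementary movements, setting $h_0=f_s$ and $h_i=h_{i-1}\circ\sigma_{p_i}$. Using the earlier proposition that similar configurations have equal sets of valid elementary movements and that applying a common $\sigma_{p_i}$ preserves similarity (i.e.\ $g\simeq h$ implies $\Gamma[g]=\Gamma[h]$ and $g\circ\sigma_{p_i}\simeq h\circ\sigma_{p_i}$), an induction on $i$ gives $g_i\simeq h_i$ together with $\sigma_{p_i}\in\Gamma[g_{i-1}]=\Gamma[h_{i-1}]$ for every $i$. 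Hence each $h_i$ is a valid connected configuration arising from $h_{i-1}$, so $\{h_0,\dots,h_r\}$ is a valid $f_sh_r$-sequence realizing the same $\sigma$; thus $\sigma$ is a valid movement of $V_s$. By symmetry the two notions of valid movement coincide.

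Next I would note that the defining condition $f_t\circ\sigma=f_t$ depends only on $V_t$: since $f_t\mid_{V_t}$ is injective, $f_t(\sigma(v))=f_t(v)$ forces $\sigma(v)=v$ for every $v\in V_t$ and leaves $\sigma$ free on $V_\emptyset$, so $f_t\circ\sigma=f_t$ is equivalent to the statement that $\sigma$ fixes $V_t$ pointwise, a condition phrased purely in terms of $V_t=V_s$; the same equivalence holds verbatim for $f_s$. Combining the two steps yields $\Phi[f_t]=\Phi[f_s]$. For the final assertion, if $\sigma\in\Phi[f_t]$ then $f_t\circ\sigma=f_t$ and $f_t\circ\sigma$ is the endpoint of a valid sequence, hence a connected $[k]$-configuration; in particular $f_t\circ\sigma\simeq f_t$, and applying the first part with $f_s=f_t\circ\sigma$ gives $\Phi[f_t]=\Phi[f_t\circ\sigma]$. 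I expect the main obstacle to be the induction in the second paragraph: one must verify carefully that the similarity $g_i\simeq h_i$ propagates and that the validity (connectivity) of each mirrored step is genuinely inherited from the original, which is exactly where the cited proposition on similar configurations is essential.
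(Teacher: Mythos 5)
Your proof is correct, and its skeleton --- reduce both conditions for membership in $\Phi$ to conditions on the occupied set $V_t=V_s$ --- is the same as the paper's, which simply invokes Proposition \ref{prop15} to get $f_s\simeq f_t\simeq f_t\circ\sigma\simeq f_s\circ\sigma$ and concludes $\sigma\in\Phi[f_s]$. Your second paragraph is in fact more careful than the paper: the authors never explicitly verify that a valid movement realized by a sequence starting at $f_t$ can also be realized starting at $f_s$, and your induction along the mirrored sequence is exactly that missing verification. The one place you diverge is the third paragraph. You read the defining condition $f_t=f_t\circ\sigma$ as literal equality of functions and correctly deduce that $\sigma$ must fix $V_t$ pointwise; but that reading cannot be what the authors intend, since it would force $\Phi[f_t]\subseteq\{(1)_{V_t}\}\times S_{V_\emptyset}$ for \emph{every} configuration, contradicting Theorem \ref{teo23} (a saturated cycle has Wilson group $\mathbb{Z}_n$) and essentially all of Sections \ref{section4} and \ref{section5}. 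The paper's own proof of this proposition quietly replaces equality by similarity: membership in $\Phi[f_t]$ is really the condition $f_t\circ\sigma\simeq f_t$, equivalently $\sigma(V_t)=V_t$. Fortunately your strategy survives this correction verbatim --- ``$\sigma$ stabilizes $V_t$ setwise'' is still a condition phrased purely in terms of $V_t=V_s$ --- so both halves of your reduction go through and the conclusion $\Phi[f_t]=\Phi[f_s]$, together with the special case $f_s=f_t\circ\sigma$, stands; you should just restate the third paragraph with the set-wise condition in place of the pointwise one.
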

\begin{proof}
Let $\sigma\in\Phi[f_t]$. Since $\sigma$ is a valid movement of $V_t$, by Proposition \ref{prop15}, we have $f_s\simeq f_t\simeq f_t\circ\sigma \simeq f_s\circ \sigma$, i.e., $\sigma\in\Phi[f_s]$ and then $\Phi[f_t]\subseteq\Phi[f_s]$. Analogously, $\Phi[f_s]\subseteq\Phi[f_t]$ and then $\Phi[f_t]=\Phi[f_s]$. Now, in particular, if $f_s=f_t\circ\sigma$ for some $\sigma\in \Phi[f_t]$ then $\Phi[f_t]=\Phi[f_t\circ\sigma]$.
\end{proof}

\begin{proposition}\label{proposition20}
Let $\sigma$ be a valid movement of $V_t$. If $\phi\in\Phi[f_t\circ \sigma]$ then $\sigma\circ\phi\circ\sigma^{-1}\in\Phi[f_t]$.
\end{proposition}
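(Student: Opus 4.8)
The plan is to unpack the two requirements hidden in the claim $\sigma\circ\phi\circ\sigma^{-1}\in\Phi[f_t]$: first, that this permutation fixes the configuration, i.e. $f_t=f_t\circ(\sigma\circ\phi\circ\sigma^{-1})$; and second, that $\sigma\circ\phi\circ\sigma^{-1}$ is genuinely a valid movement of $V_t$. The first is a one-line computation. Since $\phi\in\Phi[f_t\circ\sigma]$ we have $f_t\circ\sigma=(f_t\circ\sigma)\circ\phi$, so composing on the right with $\sigma^{-1}$ and using associativity gives
\[
f_t\circ(\sigma\circ\phi\circ\sigma^{-1})=\big((f_t\circ\sigma)\circ\phi\big)\circ\sigma^{-1}=(f_t\circ\sigma)\circ\sigma^{-1}=f_t.
\]

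For the second requirement, the idea is to build a single valid $f_tf_t$-sequence whose associated permutation is exactly $\sigma\circ\phi\circ\sigma^{-1}$, by concatenating three valid sequences. Because $\sigma$ is a valid movement of $V_t$, there is a valid sequence from $f_t$ to $f_t\circ\sigma$ realizing $\sigma$ as a product of elementary $p$-movements. Because $\phi\in\Phi[f_t\circ\sigma]$, it is in particular a valid movement of $V_{f_t\circ\sigma}$, so there is a valid sequence from $f_t\circ\sigma$ to $(f_t\circ\sigma)\circ\phi=f_t\circ\sigma$ realizing $\phi$; crucially this sub-sequence starts and ends at the same configuration $f_t\circ\sigma$. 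Finally, by Proposition \ref{prop14}, since $\sigma$ takes $f_t$ to $f_t\circ\sigma$, the inverse $\sigma^{-1}$ is a valid movement of $V_{f_t\circ\sigma}$, realized by a valid sequence from $f_t\circ\sigma$ to $(f_t\circ\sigma)\circ\sigma^{-1}=f_t$. Splicing the three sequences end to end produces one valid $f_tf_t$-sequence, and since the product of permutations is composition, its associated permutation telescopes to $\sigma\circ\phi\circ\sigma^{-1}$. Thus $\sigma\circ\phi\circ\sigma^{-1}$ is a valid movement of $V_t$, and combined with the computation above we conclude $\sigma\circ\phi\circ\sigma^{-1}\in\Phi[f_t]$.

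The main point requiring care is the consistency of the concatenation: the terminal configuration of each sub-sequence must coincide with the initial configuration of the next. The first sub-sequence ends at $f_t\circ\sigma$, the middle one both starts and ends at $f_t\circ\sigma$ (this is precisely where $\phi\in\Phi[f_t\circ\sigma]$ is needed, not merely that $\phi$ is some valid movement), and the last one starts at $f_t\circ\sigma$, so the gluing is legitimate and every intermediate configuration is still a connected $[k]$-configuration. One should also confirm that $V_{f_t\circ\sigma}=\sigma^{-1}(V_t)$ so that Proposition \ref{prop14} is applied with the correct vertex set; this is immediate from the fact that $(f_t\circ\sigma)(v)=f_t(\sigma(v))\in[k]$ exactly when $\sigma(v)\in V_t$. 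Beyond this bookkeeping no real obstacle appears, because validity of the elementary movements depends only on the weight functions, which are preserved along each sub-sequence.
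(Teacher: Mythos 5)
Your proof is correct and follows essentially the same route as the paper: decompose $\sigma\circ\phi\circ\sigma^{-1}$ into the three valid movements $\sigma$, $\phi$ and $\sigma^{-1}$ (using Proposition \ref{prop14} for the last), observe that they concatenate into a valid movement of $V_t$, and check that the resulting configuration is unchanged. If anything, your explicit computation $f_t\circ(\sigma\circ\phi\circ\sigma^{-1})=f_t$ is slightly sharper than the paper's argument, which only derives the similarity $f_t\simeq f_t\circ\sigma\circ\phi\circ\sigma^{-1}$ via Proposition \ref{prop15}, whereas the definition of $\Phi[f_t]$ asks for equality.
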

\begin{proof}
Since $\sigma$ is a valid movement of $V_t$, by Proposition \ref{prop14}, $\sigma^{-1}$ is a valid movement of $\sigma(V_t)$. On the other hand, since $\phi\in\Phi[f_t\circ \sigma]$ then $f_t\circ\sigma\simeq f_t\circ\sigma\circ\phi$. By Proposition \ref{prop15}, $\sigma^{-1}$ is a valid movement of $\sigma\circ\phi$ and then $f_t\simeq f_t\circ\sigma\circ\phi\circ\sigma^{-1}$. Because $\sigma\circ\phi\circ\sigma^{-1}$ is a valid movement of $V_t$ we have that $\sigma\circ\phi\circ\sigma^{-1}\in\Phi[f_t]$.
\end{proof}

Next, we prove that the Wilson set is, in fact, a group.

\begin{theorem}
The pair $(\Phi[f_t],\circ)$ is a group.
\end{theorem}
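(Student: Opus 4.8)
The plan is to verify the four group axioms for $(\Phi[f_t],\circ)$ directly, noting at the outset that every element of $\Phi[f_t]$ is a permutation of $V(G)$ and that composition of such permutations is associative; hence associativity is inherited for free. The identity permutation belongs to $\Phi[f_t]$, since it lies in $S_{V_\emptyset}\subseteq\Phi[f_t]$ (as the paper already records) and trivially satisfies $f_t=f_t\circ\mathrm{id}$. Thus the work reduces to establishing closure under $\circ$ and closure under inverses.

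For closure, take $\sigma,\tau\in\Phi[f_t]$. First I would check the defining fixing condition: using associativity of composition and $f_t\circ\sigma=f_t$, $f_t\circ\tau=f_t$, we get $f_t\circ(\sigma\circ\tau)=(f_t\circ\sigma)\circ\tau=f_t\circ\tau=f_t$. It remains to see that $\sigma\circ\tau$ is itself a valid movement of $V_t$. Let $f_t=g_0,g_1,\dots,g_a$ be a valid sequence realizing $\sigma$ (so $g_a=f_t\circ\sigma$) and let $f_t=h_0,h_1,\dots,h_b$ be a valid sequence realizing $\tau$ (so $h_b=f_t\circ\tau$). Since $\sigma\in\Phi[f_t]$ forces $g_a=f_t\circ\sigma=f_t=h_0$, the terminal configuration of the first sequence is exactly the initial configuration of the second, so their concatenation $g_0,\dots,g_a=h_0,h_1,\dots,h_b$ is again a valid sequence. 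By the left-composition convention, the permutation it realizes is $\sigma_{p_1}\circ\cdots\circ\sigma_{p_a}\circ\sigma_{q_1}\circ\cdots\circ\sigma_{q_b}=\sigma\circ\tau$, so $\sigma\circ\tau$ is a valid movement of $V_t$ and hence lies in $\Phi[f_t]$.

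For inverses, take $\sigma\in\Phi[f_t]$ realized by a valid sequence ending at $f_{t+r}=f_t\circ\sigma$. By Proposition \ref{prop14}, $\sigma^{-1}$ is a valid movement of $V_{t+r}$; but $f_t\circ\sigma=f_t$ gives $V_{t+r}=V_t$, so $\sigma^{-1}$ is a valid movement of $V_t$. Composing the identity $f_t=f_t\circ\sigma$ on the right with $\sigma^{-1}$ yields $f_t\circ\sigma^{-1}=f_t\circ\sigma\circ\sigma^{-1}=f_t$, so $\sigma^{-1}$ satisfies the defining condition and therefore $\sigma^{-1}\in\Phi[f_t]$. Together with the identity, closure, and inherited associativity, this shows $(\Phi[f_t],\circ)$ is a group.

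The step I expect to be the main obstacle is the closure argument, specifically the claim that the concatenation of the two realizing sequences is again valid and realizes precisely $\sigma\circ\tau$. This hinges on the fact that an element of $\Phi[f_t]$ returns the configuration exactly to $f_t$, so that the second sequence may be appended at the point where the first terminates; without this one would at best land on a configuration similar to $f_t$ and would have to invoke the class-invariance of the valid elementary movements (the proposition asserting $\Gamma[f_t]=\Gamma[f_s]$ whenever $f_t\simeq f_s$, together with Proposition \ref{prop15}) to relabel and splice the two sequences. I would also remark that Proposition \ref{proposition20} already records the conjugation behaviour $\sigma\,\phi\,\sigma^{-1}\in\Phi[f_t]$, which is not needed for the group axioms here but foreshadows the normality exploited in later sections.
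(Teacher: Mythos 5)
Your proof is correct and follows essentially the same route as the paper: associativity and the identity are immediate, inverses come from Proposition \ref{prop14}, and closure comes from composing the two realizing sequences. The only difference is cosmetic --- you make explicit the concatenation of valid sequences (and the role of the exact equality $f_t\circ\sigma=f_t$ in splicing them), whereas the paper hides this step inside its earlier proposition that $\Phi[f_t]=\Phi[f_t\circ\sigma_1]$; your version is, if anything, slightly more careful on that point.
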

\begin{proof}
Let $\sigma_1,\sigma_2\in \Phi[f_t]$, then $f_t\simeq f_t\circ\sigma_1$ and $\Phi[f_t]=\Phi[f_t\circ\sigma_1]$ hence $\sigma_2\in \Phi[f_t\circ\sigma_1]$ and $f_t\circ\sigma_1\simeq f_t\circ\sigma_1\circ\sigma_2$. Then $f_t\simeq f_t\circ\sigma_1\circ\sigma_2$ and $\sigma_1\circ\sigma_2\in \Phi[f_t]$. Therefore, $\Phi[f_t]$ is closed under the operation $\circ$.

Now, it is clear that $f_t\simeq f_t\circ (1)$ and then the identity function is an element of $\Phi[f_t]$. Also it is clear the operation $\circ$ is associative.

Finally, let $\sigma\in \Phi[f_t]$. By Proposition \ref{prop14}, $\sigma^{-1}$ is a valid movement of $V_t$ and by Proposition \ref{prop15}, we have that $\sigma^{-1}\in \Phi[f_t]$.
\end{proof}

\begin{theorem}
Let $f_0$ and $f_t$ be connected $[k]$-configurations and $\sigma$ a valid movement of $V_0$ with $f_s=f_t\circ\sigma$ and $f_t\simeq f_s$. Then there exists a valid $f_0f_t$-sequence if and only if $f_t=f_s\circ\phi$ for some $\phi\in\Phi[f_t]$.  
\end{theorem}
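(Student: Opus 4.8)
The statement is an equivalence whose intuitive content is that $\sigma$ already carries $f_0$ onto the target positions: writing $f_s$ for the configuration $f_0\circ\sigma$ reached from $f_0$ by performing the valid movement $\sigma$, the hypothesis $f_s\simeq f_t$ says that $f_s$ occupies exactly $V_t$ and differs from $f_t$ only by a relabeling. Reaching $f_t$ exactly should therefore be the same as being able to correct those labels by a movement that returns the occupied set to $V_t$, i.e. by an element of the Wilson group $\Phi[f_t]$. The plan is to prove both implications by splicing valid sequences end to end, using that a concatenation of valid sequences is again valid, that $f_s\simeq f_t$ forces $\Phi[f_s]=\Phi[f_t]$, and that $(\Phi[f_t],\circ)$ is a group.

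For sufficiency I would assume $f_t=f_s\circ\phi$ with $\phi\in\Phi[f_t]$ and assemble the $f_0f_t$-sequence from two blocks. Since $\sigma$ is a valid movement of $V_0$, the definition of a valid movement already provides a valid sequence from $f_0$ ending at $f_s$. Next, because $f_s\simeq f_t$ gives $\Phi[f_s]=\Phi[f_t]$, the permutation $\phi$ is a valid movement of $V_s$, hence is realized by a valid sequence starting at $f_s$ and ending at $f_s\circ\phi=f_t$. Concatenating the second block onto the first produces a valid $f_0f_t$-sequence, as desired.

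For necessity I would start from a valid $f_0f_t$-sequence, record it by its valid movement $\tau$ with $f_0\circ\tau=f_t$ (as in the discussion following Figure \ref{fig03}), and set $\phi:=\sigma^{-1}\circ\tau$. Then $f_s\circ\phi=f_0\circ\sigma\circ\sigma^{-1}\circ\tau=f_0\circ\tau=f_t$, so $f_t=f_s\circ\phi$ holds exactly, and it remains only to place $\phi$ in $\Phi[f_t]$. This is the step I expect to be the main obstacle. I would argue it as follows: by Proposition \ref{prop14} the inverse $\sigma^{-1}$ is a valid movement of $V_s$ carrying $f_s$ back to $f_0$; prepending the sequence that realizes $\sigma^{-1}$ to the one that realizes $\tau$ exhibits $\phi=\sigma^{-1}\circ\tau$ as a genuine valid movement of $V_s=V_t$ whose net effect returns the configuration to $f_t\simeq f_s$. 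Hence $\phi\in\Phi[f_s]=\Phi[f_t]$, again invoking $f_s\simeq f_t$. The delicate points are exactly that each spliced elementary step must keep the induced subgraph connected — automatic here because each block was valid to begin with, with Proposition \ref{prop15} propagating similarity across the splice — and that the net permutation indeed fixes the class $[f_t]$; Proposition \ref{proposition20}, which describes how $\Phi$ transforms under conjugation by a valid movement, offers an alternative, conjugation-based route to the same membership.
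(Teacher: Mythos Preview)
Your proof is correct and follows essentially the same route as the paper: for the converse you concatenate the valid movement $\sigma$ with the valid movement $\phi$ (using $\Phi[f_s]=\Phi[f_t]$), and for the forward direction you set $\phi=\sigma^{-1}\circ\tau$ (the paper writes $\sigma_1$ for your $\tau$) and argue via Proposition~\ref{prop14} that this composite is a valid movement of $V_s=V_t$ returning the occupied set to $V_t$, hence lies in $\Phi[f_s]=\Phi[f_t]$. You also correctly read the intended hypothesis as $f_s=f_0\circ\sigma$, which is how the paper's own proof uses it.
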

\begin{proof}
Assume that there exists a valid $f_0f_t$-sequence and $\sigma_1$ is a valid movement from $V_0$ to $V_t$. By Proposition \ref{prop14}, $\sigma_1^{-1}$ is a valid movement from $V_t$ to $V_0$, i.e., $f_0=f_t\circ \sigma_1^{-1}$. Since $\sigma$ is a valid movement from $V_s$ to $V_t$ then $\sigma^{-1}$ is a valid movement from $V_t$ to $V_s$, i.e., $f_s=f_t\circ \sigma^{-1}$, then $f_t\circ\sigma^{-1}\simeq f_s\circ\sigma^{-1}=f_0$. Therefore, $f_0=f_t\circ \sigma_1^{-1}\simeq f_t\circ \sigma^{-1}$ and by Proposition \ref{prop15}, $f_t\simeq f_t\circ \sigma^{-1} \circ \sigma_1 $ and for $\phi=\sigma^{-1} \circ \sigma_1$ we have that $f_t=f_s\circ\phi$ with $\phi\in\Phi[f_t]$.

Now, we verify the converse, let $f_t=f_s\circ\phi$ be for some $\phi\in\Phi[f_t]$, then $\phi$ is a valid movement from $V_s$ to $V_t$ and $\sigma$ is a valid movement from $V_0$ to $V_s$, therefore $\sigma\circ\phi$ is a valid movement from $V_0$ to $V_t$ and the theorem follows.
\end{proof}

The existence of the valid movement $\sigma$ is guaranteed by Theorem \ref{teo16} and Corollary \ref{cor17}, hence, in order to verify the existences of a valid $f_0f_t$-sequence, we only need to find some $\phi\in\Phi[f_t]$ for which $f_t=f_s\circ\phi$.

\subsection{Some Wilson groups}

Therefore, we need to know the structure of the Wilson group of a given subgraph. We begin with some particular configurations.

\begin{theorem}\label{teo23}
Let $f_t$ be a connected $[n]$-configuration over a graph $G$ of order $n$.
\begin{enumerate}
\item If $G$ is a cycle, then $\Phi[f_t]$ is the cyclic group $\mathbb{Z}_n$.
\item If $G$ is a tree, then $\Phi[f_t]$ is the trivial group $\{(1)_{V}\}$.
\end{enumerate}
\end{theorem}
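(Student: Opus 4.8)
The plan is to exploit the fact that when $G$ has order $n$ and $f_t$ is an $[n]$-configuration, the restriction of $f_t$ to $V_t$ is a bijection onto $[n]$, so $V_t=V(G)$ and there are no empty vertices, i.e.\ $V_\emptyset=\emptyset$. Consequently the defining condition $f_t\simeq f_t\circ\sigma$ of the Wilson set reduces to $\sigma(V_t)=V_t$, which holds automatically for every permutation of $V(G)$. Hence $\Phi[f_t]$ is exactly the set of \emph{all} valid movements of $V_t$, and the task becomes to list the elementary moves that can occur and to identify the group they generate.

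First I would record that along any valid sequence starting at $f_t$ the configuration stays full: each elementary move is a permutation of $V(G)$, so the occupied set $\sigma^{-1}(V_t)=V(G)$ is preserved and no empty vertex ever appears. In particular no $r$-path can ever be used, since an $r$-path requires its initial vertex $v_1$ to have weight $0$. Thus the only elementary moves available at any configuration reached are those associated with $r$-cycles, that is, with cycle subgraphs of $G$ all of whose vertices are occupied, which here means simply the cycle subgraphs of $G$. Moreover the underlying graph $G$ itself never changes (only the labelling does), so the menu of available elementary moves is literally the same at every step of every valid sequence.

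For the tree case this finishes the argument at once: a tree is acyclic, so there are no $r$-cycles and, as just noted, no $r$-paths, hence no elementary $p$-movements at all (exactly as remarked after the definition of elementary configuration movement). The only valid movement is then the empty product, the identity, and therefore $\Phi[f_t]=\{(1)_V\}$ is the trivial group.

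For the cycle case the key geometric point is that $C_n$ contains a unique cycle subgraph, namely $C_n$ itself, traversed in one of two orientations. So the only elementary moves are the full rotation $\sigma_p=(v_1v_2\cdots v_n)$ and its inverse, and applying either one again leaves a full connected configuration on the same graph $C_n$, hence is valid. By the invariance noted above, every valid movement is therefore a product of copies of $\sigma_p^{\pm1}$, i.e.\ an element of $\langle\sigma_p\rangle$, and conversely every power of $\sigma_p$ is realized by iterating the rotation; since $\sigma_p$ is an $n$-cycle it has order $n$, giving $\Phi[f_t]=\langle\sigma_p\rangle\cong\mathbb{Z}_n$. The step I expect to require the most care is precisely the upper bound, namely that no valid movement lies outside $\langle\sigma_p\rangle$: this is where one must invoke the fact that both the occupancy pattern and the graph are invariant under the moves, so that the same two rotations remain the only options at every step. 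Once this invariance is made explicit, both the generation of $\mathbb{Z}_n$ and the identification of $\Phi[f_t]$ with the full set of valid movements are routine.
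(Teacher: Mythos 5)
Your proof is correct and follows essentially the same route as the paper's own (very terse) argument: with $V_\emptyset=\emptyset$ no $r$-path move is ever available, so the only elementary moves come from cycle subgraphs, which yields the trivial group for a tree and $\langle(v_1v_2\cdots v_n)\rangle\cong\mathbb{Z}_n$ for the cycle. Your added observations --- that saturation is preserved along every valid sequence and that every valid movement automatically lies in $\Phi[f_t]$ --- are exactly the details the paper leaves implicit, and your reading of the Wilson set via $\simeq$ rather than literal equality is the one the paper itself uses throughout.
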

\begin{proof}
First, since $V_\emptyset$ is empty, then only the elementary valid movements are the cycles, for instance $\sigma=(v_1v_2\dots v_n)$ and then $\Phi[f_t]=\left\langle \sigma\right\rangle $ which is $\mathbb{Z}_n$.
Second, there is no elementary valid movements different to the identity permutation, and the result follows.
\end{proof}

\begin{theorem}\label{teo24}
Let $f_t$ be a connected $[k]$-configuration over a graph $G$ of order $n>k$. If $G$ is a cycle or a path then $\Phi[f_t]$ is $\{(1)_{V_t}\}\times S_{V_\emptyset}$.
\end{theorem}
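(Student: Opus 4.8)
The plan is to exploit the rigidity of cycles and paths: because $n>k$, the occupied set $V_t$ spans a proper sub-arc of the cycle (or a proper subpath of the path), so $G[V_t]$ is itself a path and its $k$ labeled vertices come in a well-defined linear order. I would record this order as an \emph{ordered reading} $W(f)$, namely the sequence of labels met when the block is traversed from a distinguished endpoint in a fixed direction of $G$, and then show that this reading is invariant under every valid elementary movement. Granting the invariant, any $\sigma\in\Phi[f_t]$ satisfies $f_t\simeq f_t\circ\sigma$, so the final block reoccupies the same arc $V_t$ with the same distinguished endpoint; equality of readings then forces $f_t\circ\sigma=f_t$ on $V_t$, i.e. $\sigma$ fixes every labeled vertex. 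Since $\sigma$ also preserves $V_\emptyset$ setwise, this gives $\Phi[f_t]\subseteq\{(1)_{V_t}\}\times S_{V_\emptyset}$, while the reverse inclusion is the already observed $S_{V_\emptyset}\subseteq\Phi[f_t]$.

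The substance is to classify the valid elementary movements. First I would dispose of $r$-cycle movements: a path contains no cycle, and the unique cycle of $C_n$ would require all $n$ vertices to have weight $1$, which is impossible when $k<n$. Thus only $r$-path movements and the free permutations in $S_{V_\emptyset}$ can occur. For an $r$-path movement $\sigma_p=(v_1v_2\cdots v_r)$, with $v_1$ empty and $v_2,\dots,v_r$ occupied, a direct computation gives $V_{t+1}=(V_t\setminus\{v_r\})\cup\{v_1\}$. Since $V_t$ is an arc and $v_2$ is the endpoint carrying the empty neighbour $v_1$, the occupied vertices $v_3,v_4,\dots$ must proceed monotonically along the arc, and connectivity of $G[V_{t+1}]$ forces $v_r$ to be the opposite endpoint; hence $\{v_2,\dots,v_r\}$ is the whole block and $\sigma_p$ slides it by exactly one vertex. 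This is the step I expect to be the main obstacle, since it is precisely the connectivity constraint that rules out ``partial'' slides that would otherwise split the block and reorder its labels.

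It then remains to check the invariance of $W$. A one-step slide moves the block one vertex along $G$ and carries its distinguished endpoint along with it, so the labels are read off in exactly the same order; a permutation in $S_{V_\emptyset}$ does not touch any label at all. Hence $W(f_s)=W(f_t)$ for every configuration in a valid sequence issued from $f_t$, and in particular $W(f_t\circ\sigma)=W(f_t)$ whenever $\sigma\in\Phi[f_t]$. Because $f_t\circ\sigma$ reoccupies the arc $V_t$ with the same anchored traversal, the equality of readings pins down the label at each vertex of $V_t$, yielding $\sigma\mid_{V_t}=(1)_{V_t}$ and completing the characterization. I would emphasize that the hypothesis $n>k$ is used exactly here: it guarantees a genuine endpoint from which to anchor the reading, which is what kills the cyclic relabelings that are possible in the full-cycle situation of Theorem \ref{teo23}.
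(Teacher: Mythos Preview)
Your approach is exactly the paper's, spelled out more carefully: both argue that (apart from permutations of $V_\emptyset$) the only valid elementary movements are the two one-step slides of the whole occupied block, and that these preserve the linear order of the labels. You formalize the invariant as the reading $W$; the paper simply asserts that ``the labels of $V_t$ are invariant under the valid movements''.

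There is, however, a genuine gap at precisely the step you flagged as the crux. The claim that connectivity of $G[V_{t+1}]$ forces $v_r$ to be the far endpoint of the arc fails when $G$ is a cycle with $n=k+1$: the unique empty vertex $v_1$ is then adjacent to \emph{both} endpoints of the arc, so deleting an interior $v_r$ and adjoining $v_1$ still yields a connected set (an arc going the other way round). Concretely, in $C_4$ with $V_t=\{1,2,3\}$ and vertex $4$ empty, the $3$-path $p_1=(4,1,2)$ is valid and lands on the arc $\{3,4,1\}$; following it by $p_2=(2,3,4)$ returns to $\{1,2,3\}$ with the labels cyclically shifted, so $\sigma_{p_1}\circ\sigma_{p_2}=(1\,2\,3)\in\Phi[f_t]$. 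Your invariant $W$ is not preserved by such partial slides, and in fact the statement itself fails in this boundary case. Your argument (and the paper's two-line proof) does go through for paths and for cycles with $n\ge k+2$, where the two pieces of the broken arc are separated by at least one further empty vertex on the far side; only the case of a cycle with $n=k+1$ needs to be excluded or treated separately.
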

\begin{proof}
The valid movements are given by $k$-paths only into the two opposite directions, namely $\sigma_{p_1}$ and $\sigma_{p_2}$, see Figure \ref{fig04}.
\begin{figure}[!htbp]
\begin{center}
\includegraphics{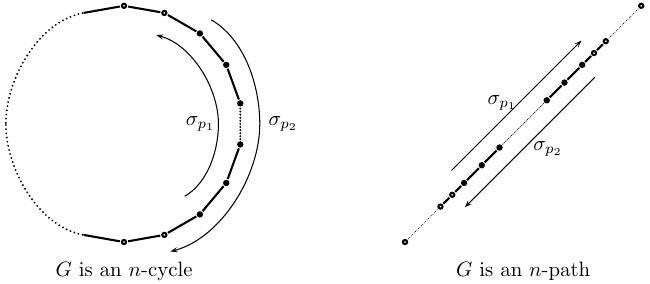}
\caption{The two possible direction of a $k$-path, for $k<n$, into a $n$-cycle or a $n$-path.}\label{fig04}
\end{center}
\end{figure}
Since the labels of $V_t$ are invariant under the valid movements, we have that $\Phi[f_t]=\{(1)_{V_t}\}\times S_{V_\emptyset}$ since any permutation of $V_\emptyset$ also leaves invariant the labels of $V_t$.
\end{proof}

\section{Saturated configurations}\label{section4}

In this section, we study the configurations without empty vertices, that is, each vertex has weight 1.

\begin{definition}
Let $G$ be a connected graph of order $n$. A connected $[n]$-configuration is called \emph{saturated}.
\end{definition}

Theorem \ref{teo23} states a result concerning to saturated configurations, namely, when $G$ is a cycle or a tree. Note that, the elementary movements are only given by cycles, that is, a vertex can be moved if it is in a cycle. 

\begin{corollary}
Let $f_t$ be a connected $[n]$-configuration over a unicyclic connected graph $G$ of order $n$ for which its cycle has order $k$. Then $\Phi[f_t]$ is the cyclic group $\mathbb{Z}_k$.
\end{corollary}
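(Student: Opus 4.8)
The plan is to reduce this statement directly to the second part of Theorem~\ref{teo23} by analyzing where elementary valid movements can occur in a unicyclic graph. First I would observe that since $G$ is a connected $[n]$-configuration over a graph of order $n$, the configuration is saturated, so $V_\emptyset=\emptyset$ and every vertex has weight $1$. As noted in the remark preceding the corollary, in the saturated case an elementary valid movement cannot be an $r$-path (an $r$-path requires a vertex of weight $0$, which does not exist here), so every elementary valid movement must be an $r$-cycle $p=(v_1,\dots,v_r)$ with all vertices of weight $1$. Thus $\Phi[f_t]$ is generated by the permutations $\sigma_p$ arising from cycles of the graph $G$ itself.

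The key structural fact is that a unicyclic connected graph of order $n$ has exactly $n$ edges and contains a unique cycle $C$, of order $k$; every edge outside $C$ is a bridge lying in one of the trees hanging off the cycle. First I would argue that the only cycle (as a subgraph) available to form an $r$-cycle movement is precisely $C$, since $C$ is the unique cycle of $G$. Hence the only nontrivial elementary valid movement, up to orientation, is $\sigma_C=(u_1u_2\dots u_k)$ where $u_1,\dots,u_k$ are the vertices of $C$ in cyclic order, together with its inverse; any genuine movement of the tree-branches would require moving a vertex not lying on a cycle, and by the same reasoning as in the tree case of Theorem~\ref{teo23}, a vertex lying on no cycle admits no valid elementary movement that displaces it while preserving connectivity.

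Next I would verify validity: rotating the labels around $C$ by one step sends $V_t=V(G)$ to itself (indeed it fixes the vertex set setwise, permuting labels only), so $G[V_{t+1}]=G[V(G)]=G$ remains connected, confirming $\sigma_C$ is valid. Therefore $\Phi[f_t]=\langle\sigma_C\rangle$. Since $\sigma_C$ is a single $k$-cycle acting on the $k$ vertices of $C$ and fixing the remaining $n-k$ vertices, it has order exactly $k$, and the cyclic group it generates is isomorphic to $\mathbb{Z}_k$. This mirrors exactly the first part of Theorem~\ref{teo23} localized to the subgraph $C$.

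I expect the main obstacle to be justifying rigorously that no valid movement can permute the vertices lying on the pendant trees, i.e. that the generators of $\Phi[f_t]$ really come only from the unique cycle $C$ and not from longer elementary $r$-cycles passing through bridge edges. The careful point is that an elementary $p$-movement is defined from a \emph{cycle} $p$ of the graph, and a unicyclic graph has no cycle other than $C$, so no such $p$ can traverse a bridge; consequently the bridge vertices are never moved by any elementary valid movement, and hence are fixed by every element of $\Phi[f_t]$. Once this is pinned down, the identification $\Phi[f_t]\cong\mathbb{Z}_k$ follows immediately, completing the argument.
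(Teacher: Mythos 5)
Your proposal is correct and follows essentially the same route as the paper, which derives this corollary from the remark that in a saturated configuration the only elementary movements come from cycles of $G$, so that the unique $k$-cycle of the unicyclic graph generates $\Phi[f_t]\cong\mathbb{Z}_k$ exactly as in the first part of Theorem~\ref{teo23}. No substantive difference from the paper's (implicit) argument.
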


The following definitions are about edge-connectivity and edge-blocks.

\begin{definition}
A non-empty bridgeless connected subgraph $\textbf{B}$ of $G$ is called an \emph{edge-block} of $G$ if $\textbf{B}$ is maximal.
\end{definition}

Note that the Wilson group induces a (left) group action $\varphi$ into the set of vertices $\varphi\colon \Phi[f_t] \times V(G) \rightarrow V(G) $ where $\varphi(\sigma,v)=\sigma(v)$, therefore we have Theorem \ref{teo30}.

\begin{theorem}\label{teo30}
If $f_t$ is a saturated configuration over $G$ and $v\in V(\textbf{B})$ with $\textbf{B}$ an edge-block of $G$, then the orbit $\Phi[f_t]v$ of $v$ is $V(\textbf{B})$.
\end{theorem}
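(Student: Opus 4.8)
The plan is to prove the two inclusions $\Phi[f_t]v\subseteq V(\textbf{B})$ and $V(\textbf{B})\subseteq \Phi[f_t]v$ separately, after first pinning down the exact form of $\Phi[f_t]$ for a saturated configuration. Since $f_t$ is saturated we have $V_\emptyset=\emptyset$, so by the definition of an $r$-path there are no valid elementary movements coming from paths; the only elementary valid movements are the cyclic permutations $\sigma_p=(v_1v_2\dots v_r)$ associated to cycles $p=(v_1,\dots,v_r)$ of $G$, and each of these is automatically valid, since rotating the labels along a cycle of a connected saturated configuration again yields a connected saturated configuration. Moreover the defining condition of the Wilson set is vacuous here, because any such $\sigma$ preserves $V_t=V(G)$ setwise and hence $f_t\simeq f_t\circ\sigma$. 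Consequently $\Phi[f_t]$ is precisely the subgroup of the symmetric group on $V(G)$ generated by all the cycle rotations $\sigma_p$, and I will use this description throughout.

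For the inclusion $\Phi[f_t]v\subseteq V(\textbf{B})$, I would first record two structural facts about edge-blocks: (i) the vertex sets of the edge-blocks of $G$ partition $V(G)$, since if two maximal bridgeless connected subgraphs shared a vertex their union would again be bridgeless and connected, contradicting maximality; and (ii) every cycle of $G$, being itself a bridgeless connected subgraph, is contained in a single edge-block. Granting these, each generator $\sigma_p$ fixes every vertex off the cycle $p$ and permutes the vertices of $p$, all of which lie in one edge-block; hence every $\sigma_p$ preserves the partition of $V(G)$ into edge-block vertex sets. Therefore the whole group $\Phi[f_t]$ preserves this partition, and the orbit of $v$ cannot leave the edge-block $\textbf{B}$ containing it.

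For the reverse inclusion I would show that $\Phi[f_t]$ acts transitively on $V(\textbf{B})$ by working edge by edge. I claim the endpoints $x,y$ of any edge $xy\in E(\textbf{B})$ lie in the same orbit: since $\textbf{B}$ is bridgeless and connected, the edge $xy$ lies on a cycle $p$ contained in $\textbf{B}$, the cyclic group $\langle\sigma_p\rangle\subseteq\Phi[f_t]$ acts transitively on the vertices of $p$, and as $x,y$ are both on $p$ some power of $\sigma_p$ sends $x$ to $y$. Because $\textbf{B}$ is connected, any two of its vertices are joined by a path of such edges, so all of $V(\textbf{B})$ lies in a single orbit; in particular $V(\textbf{B})\subseteq\Phi[f_t]v$. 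Combining the inclusions gives $\Phi[f_t]v=V(\textbf{B})$, and if $\textbf{B}$ is a single vertex the statement is immediate since then $\Phi[f_t]$ fixes $v$ and $V(\textbf{B})=\{v\}$.

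The step I expect to be the main obstacle is the transitivity argument, because the tempting shortcut of producing one cycle through $v$ and a target vertex $u$ fails in general: in a bridgeless graph two vertices need not lie on a common cycle, as in two triangles glued at a vertex. Reducing transitivity to the endpoints of individual edges, where a common cycle is guaranteed by bridgelessness, and then propagating along a path in $\textbf{B}$, is what makes the argument go through, and I would phrase it so that it also covers the degenerate edge-block consisting of a single vertex.
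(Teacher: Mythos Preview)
Your proof is correct and complete. The difference from the paper lies in the transitivity half. The paper appeals to Menger's theorem: given $u,v\in V(\textbf{B})$ it takes two edge-disjoint $uv$-paths in the $2$-edge-connected graph $\textbf{B}$, observes that their union decomposes as a chain of cycles $p_1,\dots,p_r$ meeting at cut-vertices $v_0=v,v_1,\dots,v_r=u$, and then composes suitable powers $\sigma_{p_r}^{a_r}\circ\cdots\circ\sigma_{p_1}^{a_1}$ to carry $v$ to $u$ in one shot. Your argument is more elementary: you reduce to the case of adjacent vertices, using only that every edge of a bridgeless graph lies on a cycle (so $\sigma_p$ moves one endpoint to the other), and then chain along a path in $\textbf{B}$. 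This avoids Menger and also sidesteps the paper's slightly delicate assertion about exactly which internal vertices the two edge-disjoint paths share. For the inclusion $\Phi[f_t]v\subseteq V(\textbf{B})$ the two proofs say the same thing; your formulation via ``edge-blocks partition $V(G)$ and every cycle lies in one block, hence every generator respects the partition'' makes the invariance explicit, whereas the paper just notes that a vertex outside $\textbf{B}$ is separated from it by a bridge.
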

\begin{proof}
First, note that an edge-block could be separable if it contains cut-vertices. Now, let $u$ be a vertex of $V(\textbf{B})$. By Menger's Theorem, there exist two edge disjoint $uv$-paths such that they internally share only cut-vertices $v_1,\dots,v_{r-1}$. The union of this two paths is a union of cycles $p_1,\dots p_r$ where $v=v_0$ is a vertex of $p_1$ and $u=v_r$ is a vertex of $p_r$. The permutation $\sigma=\sigma^{a_r}_{p_r}\circ\dots\circ\sigma^{a_1}_{p_1}$ for which $\sigma^{a_i}_{p_i}(v_{i-1})=v_i$ for $i\in\{1,\dots,r\}$ maps $v$ to $u$, therefore $u\in \Phi[f_t]v$ and $V(\textbf{B})\subseteq \Phi[f_t]v$.
Finally, because a vertex $x$ out of $V(\textbf{B})$ is connected to $V(\textbf{B})$ via a bridge, it is not possible to move $x$ into $V(\textbf{B})$. Therefore $V(\textbf{B})=\Phi[f_t]v$.
\end{proof}

In consequence, the Wilson group of a saturated configuration of a graph in the product of the Wilson groups arising from each edge-block. Hence, we analyze the edge-blocks to know which is the Wilson group of a saturated graph.

We recall that $S_X$ denotes the symmetric group over $X$, while $A_X$ denotes the alternating group over $X$.

\begin{lemma}\label{lemma31}
Let $G=(V,E)$ be a graph which is a cycle $C$ with a chord with some subdivisions. And let $f_t$ be a saturated configuration of $G$. Then $\Phi[f_t]=S_V$.
\end{lemma}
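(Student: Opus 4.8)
The plan is to prove that a cycle with one chord, possibly subdivided, generates the full symmetric group $S_V$ on its vertex set, where $V = V(G)$ and $f_t$ is saturated (so $V_\emptyset = \emptyset$). The natural strategy is to exhibit two specific elementary movements whose associated permutations generate $S_V$, using the fact that each cycle in $G$ yields a cyclic permutation via an elementary $p$-movement. The underlying structure is this: the chord splits the big cycle $C$ into two arcs, and together with the chord these arcs form two ``inner'' cycles sharing a common path (the chord and its subdivision vertices). I would first set up careful notation, labeling the vertices so that the two inner cycles $p_1$ and $p_2$ are explicit, and observe that each $\sigma_{p_1}$ and $\sigma_{p_2}$ is a valid elementary movement by Theorem \ref{teo23}(1) applied locally, since rotating a cycle keeps $G[V_{t+1}]$ connected (the rest of $G$ hangs on by shared vertices).

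The key algebraic step is to reduce to a known generation result for $S_n$. The two cyclic permutations $\sigma_{p_1}$ and $\sigma_{p_2}$ are full cycles on their respective vertex sets, and these vertex sets overlap in exactly the chord-path. I would exploit the classical fact that two cyclic permutations on overlapping supports, whose union is all of $V$ and whose overlap is a single shared path, generate a highly transitive group. Concretely, I expect to commute or conjugate $\sigma_{p_1}$ and $\sigma_{p_2}$ against each other to manufacture a transposition or a $3$-cycle supported on a small set of vertices; once I have a $3$-cycle together with a full cycle on $V$ (obtained as $\sigma_C$, the rotation of the outer cycle $C$, which is itself a valid movement), standard generation theorems give $\langle \sigma_C, (\text{$3$-cycle})\rangle = S_V$ or $A_V$. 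To land in $S_V$ rather than $A_V$, I would check parity: the presence of two cycles of different lengths sharing a path produces permutations of both parities, so the group is not contained in $A_V$, forcing $S_V$.

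The main obstacle will be the bookkeeping forced by the subdivisions. Subdividing edges lengthens the cycles $p_1$, $p_2$, and $C$, so the supports and overlaps change size, and I must verify that the commutator or conjugate I construct still lands on a controllable small support independent of how many subdivision vertices appear. I expect the cleanest route is to show directly that $\langle \sigma_{p_1}, \sigma_{p_2}\rangle$ acts transitively on $V$ (immediate, since the two supports cover $V$ and overlap) and is primitive (the overlap structure rules out nontrivial block systems), then invoke Theorem \ref{teo30} together with a primitivity-plus-$3$-cycle criterion (Jordan's theorem: a primitive group containing a $3$-cycle contains $A_V$). The parity argument then upgrades $A_V$ to $S_V$. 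The delicate point to get right is producing the $3$-cycle cleanly from $\sigma_{p_1}$ and $\sigma_{p_2}$ regardless of subdivision lengths; I would handle this by conjugating one cyclic rotation by the other to shift a single vertex across the shared path and reading off a short cycle as the resulting commutator $\sigma_{p_1}^{-1}\sigma_{p_2}^{-1}\sigma_{p_1}\sigma_{p_2}$, which should collapse to a $3$-cycle on the three vertices adjacent to the two endpoints of the chord.
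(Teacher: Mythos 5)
Your setup (the two inner cycles plus the outer cycle, all yielding valid rotations in the saturated case) matches the paper's, but the step on which everything hinges --- manufacturing a $3$-cycle or transposition of small support --- is left as a guess, and the guess is wrong. The commutator $\sigma_{p_1}^{-1}\sigma_{p_2}^{-1}\sigma_{p_1}\sigma_{p_2}$ of two cyclic rotations whose supports overlap in the chord path is not a $3$-cycle once that path has at least two vertices, which it always does (the chord has two endpoints even before any subdivision): for instance, for $(1\,2\,3\,4)$ and $(3\,4\,5\,6)$ overlapping in $\{3,4\}$ the commutator is $(2\,6)(3\,4)$, an even permutation that is not a $3$-cycle. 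The paper avoids this entirely by combining all three rotations with orientations chosen so that two turn one way and one the other, obtaining $\sigma_{C}\circ\sigma_{C''}\circ\sigma_{C'}=(v_1 w_1)$, a single transposition; conjugating this by suitable powers of the rotations then yields every transposition and hence $S_V$ directly. No Jordan-type primitivity criterion is needed, which is fortunate because the primitivity of $\langle\sigma_{p_1},\sigma_{p_2}\rangle$ is something you assert but do not prove.

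Your parity argument is also mis-stated. ``Two cycles of different lengths'' does not produce permutations of both parities (lengths $3$ and $5$ both give even permutations). What is true is that the three cycle lengths $|C'|$, $|C''|$, $|C|$ sum to an even number, so at least one is even and the corresponding rotation is an odd permutation; but the cleanest fix is the paper's, which produces a transposition outright and makes any parity discussion unnecessary. Compare with Lemma~\ref{lemma32}, where two cycles meet in a single vertex and the group genuinely can collapse to $A_V$: the chord configuration escapes that fate precisely because of the three-rotation identity above, not because of a general ``different lengths'' principle.
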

\begin{proof}
The cycle $C$ can be interpreted as the union of the cycles $C'=(v_1\dots v_rw_m\dots w_1)$ and $C''=(w_1\dots w_mu_s\dots u_1)$ where the chord with its subdivisions is the path $T=(w_1\dots w_m)$ and the cycle $C$ is $(u_1\dots u_sw_mv_r\dots v_1w_1)$, see Figure \ref{fig05}.
\begin{figure}[!htbp]
\begin{center}
\includegraphics{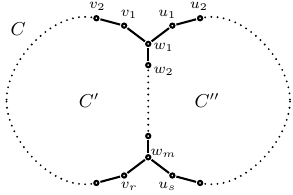}
\caption{Cycles of Lemma \ref{lemma31}.}\label{fig05}
\end{center}
\end{figure}

Since for the permutations $\sigma_{C'}$ and $\sigma_{C''}$ is turning clockwise while for the permutations $\sigma_C$ is turning counterclockwise, we have that
\[\sigma_{C}\circ\sigma_{C''}\circ\sigma_{C'}=(v_1w_1).\]
We call $\sigma'$ to this transposition. Now, we show $(vu)\in \Phi[f_t]$ for any $v,u\in V(G)$ with $v\not=u$. Without lossing of generality, $v=w_1$ and $u\in V(C'')$. For some $i$, $\sigma^i_{C''}$ we have $\sigma^i_{C''}(u)=v$. Suppose $u\not\in\{w_2,\dots,w_m\}$, then the permutation $\sigma=\sigma^i_{C''}\circ\sigma^{-1}_{C'}$ is such that $\sigma(v)=u$ with $v\sim u$. If $u\in\{w_2,\dots,w_m\}$, then $\sigma=\sigma^{i+1}_{C''}\circ\sigma^{-1}_{C'}$ is such that $\sigma(v)=u$ such that $v\sim u$. Hence, $(vu)=\sigma^{-1}\circ \sigma'\circ \sigma$ and $(vu)\in \Phi[f_t]$ and then $\Phi[f_t]=S_V$.
\end{proof}

A permutation is an element of $A_X$ if and only if it is a product of an even number of transpositions in $X$. Since every 3-cycle $(ijk)$ is the product of two transpositions $(ij)(ik)$ and the product of two transpositions $(ij)(kl)$ is the product of two 3-cycles $(ikj)(kjl)$ then the alternating group is generated by 3-cycles.

\begin{lemma}\label{lemma32}
Let $G=(V,E)$ be a graph which is the union of two cycles $C$ and $C'$ with exactly a cut vertex. And let $f_t$ be a saturated configuration of $G$. Then $\Phi[f_t]=S_V$ if $C$ or $C'$ is even, otherwise $\Phi[f_t]=A_V$.
\end{lemma}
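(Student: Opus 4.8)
The plan is to identify $\Phi[f_t]$ as the group generated by the two cycle rotations, manufacture a single $3$-cycle from a commutator, bootstrap from that $3$-cycle up to the full alternating group, and then read off $S_V$ versus $A_V$ from a parity count. Write $x$ for the cut vertex and label $C=(x\,c_1\,c_2\cdots c_{a-1})$ and $C'=(x\,d_1\,d_2\cdots d_{b-1})$, where $a=|V(C)|$ and $b=|V(C')|$. Since $f_t$ is saturated there are no empty vertices, so the only valid elementary movements are full rotations of the cycles of $G$; and because $C$ and $C'$ meet only in $x$, these two cycles are the only cycles of $G$. Hence $\Phi[f_t]=\langle\sigma_C,\sigma_{C'}\rangle$, where $\sigma_C$ is an $a$-cycle and $\sigma_{C'}$ is a $b$-cycle whose supports meet only in $x$.

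The key step is a commutator computation. Conjugating $\sigma_C$ by $\sigma_{C'}$ only relabels the entry $x=c_0$, giving $\sigma_{C'}\sigma_C\sigma_{C'}^{-1}=(d_1\,c_1\,c_2\cdots c_{a-1})$, and multiplying by $\sigma_C^{-1}$ cancels all but three points:
\[
\sigma_{C'}\,\sigma_C\,\sigma_{C'}^{-1}\,\sigma_C^{-1}=(x\,d_1\,c_1).
\]
Thus $\Phi[f_t]$ contains the $3$-cycle $\tau=(x\,d_1\,c_1)$, tying together the cut vertex and one neighbour on each cycle.

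Next I would spread $\tau$ around by conjugation and generate $A_V$. Since $\sigma_C^{k}\tau\sigma_C^{-k}=(c_k\,d_1\,c_{k+1})$ (indices of the $c$'s mod $a$, with $c_0=x$) and $\sigma_{C'}^{k}\tau\sigma_{C'}^{-k}=(d_k\,d_{k+1}\,c_1)$ (indices of the $d$'s mod $b$, with $d_0=x$), the group contains two families of $3$-cycles. Products of consecutive members of the first family yield the consecutive $3$-cycles $(c_k\,c_{k+2}\,c_{k+1})$ on $V(C)$, and these together with $\tau$ (which attaches the extra point $d_1$ to $c_0,c_1$) generate the alternating group on $V(C)\cup\{d_1\}$; symmetrically the second family generates the alternating group on $V(C')\cup\{c_1\}$. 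These two alternating groups have supports whose union is all of $V$ and which overlap in the three points $x,c_1,d_1$; since two alternating groups on sets overlapping in at least two points generate the alternating group on their union, I conclude $A_V\subseteq\Phi[f_t]$.

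Finally a parity count settles which group occurs, using that an $m$-cycle is even exactly when $m$ is odd: so $\sigma_C\in A_V$ iff $a$ is odd and $\sigma_{C'}\in A_V$ iff $b$ is odd. If $C$ or $C'$ is even then the corresponding generator is an odd permutation, so $\Phi[f_t]\not\subseteq A_V$ and hence $\Phi[f_t]=S_V$; if both are odd, both generators lie in $A_V$, forcing $\Phi[f_t]\subseteq A_V$ and hence $\Phi[f_t]=A_V$. I expect the main obstacle to be the bootstrapping in the third paragraph: the commutator and the parity count are short, but promoting the single $3$-cycle $\tau$ to all of $A_V$ requires care in verifying that the two conjugation families really generate the two overlapping alternating groups (via the standard fact that consecutive $3$-cycles generate an alternating group) and that the overlap lemma applies on the nose.
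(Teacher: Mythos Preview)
Your argument is correct and follows essentially the same architecture as the paper's proof: both compute the commutator $[\sigma_{C'},\sigma_C]$ to produce a single $3$-cycle through the cut vertex and one neighbour on each side, then bootstrap to all of $A_V$, and finish with the identical parity argument. The only difference is in the bootstrapping step: the paper reduces an arbitrary $3$-cycle $(w_1\,v\,u)$ to the basic one by a short explicit case analysis on whether $v,u$ lie in $C$ or $C'$, whereas you conjugate by powers of $\sigma_C$ and $\sigma_{C'}$ to build the two overlapping alternating subgroups $A_{V(C)\cup\{d_1\}}$ and $A_{V(C')\cup\{c_1\}}$ and then invoke the overlap principle; since your two supports meet in the three points $x,c_1,d_1$, this goes through cleanly (indeed already the $3$-cycles $(x\,c_1\,c_i)$ and $(x\,c_1\,d_j)$ lie in the respective subgroups and generate $A_V$ directly).
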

\begin{proof}
First, we prove $A_V\subseteq \Phi[f_t]$, that is $(wvu)$ is an element of $\Phi[f_t]$ for any $w,v,u\in V$. We can assume $w=w_1$ since, for some $j$, $\sigma=\sigma^j_{C}$ or $\sigma=\sigma^j_{C'}$ is such that $\sigma(w)=w_1$ and then $(wvu)=\sigma^{-1}\circ(w_1vu)\circ\sigma$. 

Second, let $C=(w_1v_1\dots v_r)$ be and $C'=(w_1u_1\dots u_s)$, see Figure \ref{fig06}. Therefore $\sigma^{-1}_{C'}\circ\sigma^{-1}_{C}\circ\sigma_{C'}\circ\sigma_C=(w_1v_1u_1)$. We call $\sigma'$ to this 3-cycle. 

Then, we divide the proof into three cases:
\begin{figure}[!htbp]
\begin{center}
\includegraphics{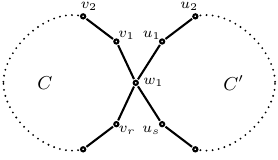}
\caption{Cycles of Lemma \ref{lemma32}.}\label{fig06}
\end{center}
\end{figure}
\begin{enumerate}
\item If $v=v_1$ and $u\in V(C')$, for some $j$, $\sigma=\sigma^j_{C'}$ is such that $\sigma(u)=w_1$ then $(w_1vu)=\sigma^{-1}\circ\sigma^{-1}_C\circ\sigma\circ\sigma_C$.
\item If $v=v_1$ and $u\in V(C)$, for some $j$, $\sigma=\sigma^j_{C}$ is such that $\sigma(u)=w_1$ then $(w_1vu)=\sigma^{-1}\circ\sigma_{C'}\circ\sigma'\circ\sigma^{-1}_{C'}\circ\sigma$.
\item Similar to (1.), but $v\in V(C)\setminus\{v_1\}$. If $u=u_1$ then the case is analogous to (1.) by symmetry. If $u\not=u_1$ then $\sigma=\sigma^{-1}_{C'}\circ\sigma^{j}\circ\sigma_{C'}$, for $\sigma^j(v)=v_1$ and some $j$, gets a configuration similar to the case (1.)
\end{enumerate}
Now, if $C$ and $C'$ are odd cycles, then $\sigma_C$ and $\sigma_{C'}$ are even permutations, therefore $A_V= \Phi[f_t]$. On the other hand, if $C$ or $C'$ is even, then it is an odd permutations, therefore $S_V= \Phi[f_t]$.
\end{proof}

In order to prove our main results, we define the following concept.

\begin{definition}
An edge-block of a graph is called \emph{weak} if it is a cycle or if every two cycles sharing vertices have exactly a vertex in common.
\end{definition}

\begin{figure}[!htbp]
\begin{center}
\includegraphics{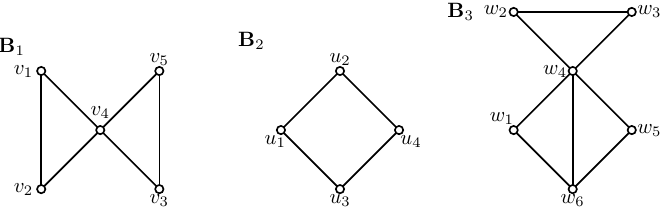}
\caption{The edge-blocks $\textbf{B}_1$ and $\textbf{B}_2$ are weak but $\textbf{B}_3$ is not.}\label{fig07}
\end{center}
\end{figure}

\begin{lemma}\label{lemma34}
Let $\textbf{B}$ be an edge-block that is not a cycle and $f_t$ be a saturated configuration. If $\textbf{B}$ is weak such that every cycle is odd, then $\Phi[f_t]=A_V$ otherwise $\Phi[f_t]=S_V$.
\end{lemma}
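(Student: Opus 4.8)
In the saturated case $V_\emptyset=\emptyset$, so every cycle rotation $\sigma_C$ is a valid movement (it fixes the occupied set $V=V(\textbf{B})$), and by Theorem \ref{teo30} the group $\Phi[f_t]$ acts on $V$ as the subgroup of $S_V$ generated by the rotations $\{\sigma_C: C \text{ a cycle of } \textbf{B}\}$. I would prove the lemma in two independent pieces: a lower bound $A_V\subseteq\Phi[f_t]$ that holds in \emph{all} cases, together with a parity criterion. Granting the lower bound, the dichotomy is immediate. First note that $\textbf{B}$ is weak if and only if it is a cactus, i.e. every edge lies on a unique cycle: if an edge lay on two cycles they would share both its endpoints, contradicting weakness, while bridgelessness forces every edge onto at least one cycle. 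In a cactus every cycle is one of the basic cycles, so if $\textbf{B}$ is weak with every cycle odd then each generator $\sigma_C$ is a cycle of odd length $r$, hence a product of $r-1$ (an even number) of transpositions, i.e. an even permutation. Thus $\Phi[f_t]\subseteq A_V$, and with the lower bound $\Phi[f_t]=A_V$.

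\textbf{The ``otherwise'' case.} Here either (a) $\textbf{B}$ is weak but has an even cycle $C$, or (b) $\textbf{B}$ is not weak. In case (a) the rotation $\sigma_C$ of an even cycle is an odd permutation lying in $\Phi[f_t]$. In case (b) weakness fails, so two cycles share more than one vertex; their union then contains a theta subgraph, that is, a cycle with a subdivided chord, and Lemma \ref{lemma31} produces the transposition $\sigma_C\circ\sigma_{C''}\circ\sigma_{C'}\in\Phi[f_t]$. In both subcases $\Phi[f_t]$ contains an odd permutation, so $\Phi[f_t]\not\subseteq A_V$; since $[S_V:A_V]=2$, combined with the lower bound this forces $\Phi[f_t]=S_V$.

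\textbf{The lower bound $A_V\subseteq\Phi[f_t]$ (the crux).} I would argue by induction along an ear decomposition $\textbf{B}=C_0\cup P_1\cup\dots\cup P_\ell$ (standard for $2$-edge-connected graphs), where each ear $P_{i+1}$ is either \emph{open} (distinct endpoints) or \emph{closed} (a cycle meeting the current block $\textbf{B}_i$ in a single vertex). At each step, using that $\textbf{B}_i$ is $2$-edge-connected (Menger, exactly as in Theorem \ref{teo30}), the new ear together with paths of $\textbf{B}_i$ forms two cycles meeting in a single vertex (closed case) or two cycles sharing a path (open case). Lemma \ref{lemma32}, respectively Lemma \ref{lemma31}, then gives a copy of $A_W$ (or $S_W$) on the vertex set $W$ of that local configuration. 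The set $W$ meets $V(\textbf{B}_i)$ in at least two vertices (in a whole cycle in the closed case), so I glue the inductive $A_{V(\textbf{B}_i)}$ to $A_W$ via the standard fact that $\langle A_X,A_Y\rangle=A_{X\cup Y}$ whenever $|X\cap Y|\ge 2$: choosing $a,b\in X\cap Y$, all $3$-cycles $(a\,b\,z)$ with $z\in X\cup Y$ appear, and these generate $A_{X\cup Y}$. This upgrades $A_{V(\textbf{B}_i)}$ to $A_{V(\textbf{B}_{i+1})}$, and after the last ear one obtains $A_V\subseteq\Phi[f_t]$. The base case, a single ear attached to $C_0$, is handled verbatim by Lemmas \ref{lemma31} and \ref{lemma32}.

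\textbf{Main obstacle.} The parity bookkeeping of the first paragraph and the base case are routine; the delicate point is the lower bound. The real work is ensuring that each ear contributes a local alternating (or symmetric) group overlapping the already-built part in at least two vertices, so that the gluing identity applies, and verifying the structural correspondence that weakness of $\textbf{B}$ is exactly the absence of open ears (equivalently, $\textbf{B}$ is a cactus). Once this correspondence is secured, an open ear always yields a theta, hence a transposition, and a closed ear built from an even cycle likewise yields an odd permutation, so the appearance of an odd element in $\Phi[f_t]$ coincides precisely with the failure of the ``weak with all cycles odd'' hypothesis, matching the statement.
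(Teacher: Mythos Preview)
Your proof is correct and takes a genuinely different route from the paper's. The paper argues as follows: since $\textbf{B}$ is not a cycle it contains either a theta subgraph (non-weak case) or a figure-eight subgraph (weak case); given any three vertices $u,v,w$, one appeals to Theorem~\ref{teo30} to ``send all of them'' into the vertex set of that fixed gadget via some $\sigma\in\Phi[f_t]$, applies Lemma~\ref{lemma31} or \ref{lemma32} there to realize the $3$-cycle, and conjugates back by $\sigma^{-1}$. This is very short, but the step ``we can send there all of them via a permutation $\sigma$'' tacitly uses more than the $1$-transitivity provided by Theorem~\ref{teo30}; making it rigorous essentially requires an inductive argument along the block/cycle structure---close in spirit to what you do. Your ear-decomposition approach trades brevity for explicitness: each ear contributes a local $A_W$ (or $S_W$) that overlaps the previously built $A_{V(\textbf{B}_i)}$ in at least two vertices, and the gluing identity $\langle A_X,A_Y\rangle=A_{X\cup Y}$ for $|X\cap Y|\ge 2$ does the rest. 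Your parity analysis (weak $\Leftrightarrow$ cactus, hence every cycle is a block-cycle and all generators are even) is cleaner than the paper's one-line appeal to ``the parity of the odd cycles''.

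One small imprecision worth tightening: in the open-ear step you form $C_1=P_{i+1}\cup Q_1$ and $C_2=P_{i+1}\cup Q_2$ with $Q_1,Q_2$ two \emph{edge}-disjoint $a$--$b$ paths in $\textbf{B}_i$, and then invoke Lemma~\ref{lemma31}. But Lemma~\ref{lemma31} needs a genuine theta, i.e.\ $Q_1,Q_2$ internally \emph{vertex}-disjoint, which $2$-edge-connectivity alone does not guarantee. The fix is immediate: take $C=P_{i+1}\cup Q_1$ and follow $Q_2$ from $a$ until it first returns to $C$; this segment is a chord-path for $C$, so $C$ together with it is a theta that still contains all of $P_{i+1}$ and meets $V(\textbf{B}_i)$ in $V(Q_1)$ (at least two vertices), and the gluing goes through unchanged. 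Alternatively, you can bypass Lemma~\ref{lemma31} entirely here: the single rotation $\sigma_C$ conjugates $3$-cycles of $A_{V(\textbf{B}_i)}$ around $C$, picking up the internal vertices of $P_{i+1}$ one at a time.
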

\begin{proof}
Let $\textbf{B}$ be as defined above and suppose that it is weak and every cycle is odd. We prove that $(uvw)\in \Phi[f_t]$ for any $u,v,w\in V(\textbf{B})$ as follows: since $u,v,w$ are in the same orbit and there are (at least) two cycles $C$ and $C'$ with exactly a cut vertex in common. We can send there all of them via a permutation $\sigma$, by Lemma \ref{lemma32}, we can get this 3-cycle there and then, via $\sigma^{-1}$ we get the desire 3-cycle. Because the parity of the odd cycles, $\Phi[f_t]=A_V$.

On the other hand, if $\textbf{B}$ contains an even cycle $\Phi[f_t]=S_V$ or if it is not weak, we can assume that $C$ and $C'$ have in common a path with more than a vertex therefore any transposition $(uv)$ can be done via a permutation $\sigma$ sending $u$ and $v$ to the cycles $C$ and $C'$, by Lemma \ref{lemma31}, we can get this 2-cycle there and finally, via $\sigma^{-1}$ we get the desire transposition getting that $\Phi[f_t]=S_V$.
\end{proof}

Now, we can describe the Wilson group of a saturated configuration.

\begin{theorem}
Let $f_t$ be a saturated configuration of a graph $G$, then 
\[\Phi[f_t]=\stackrel{r}{\underset{i=1}{\prod}}\varGamma_{i}\]
where $\Gamma_i$ is a cyclic group, an alternating group or a symmetric group.
\end{theorem}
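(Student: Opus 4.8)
The plan is to reduce the whole group to an internal direct product indexed by the edge-block decomposition of $G$, and then to identify each factor with one of the three admissible group types using the structural results already proved. First I would recall that the edge-blocks $\textbf{B}_1,\dots,\textbf{B}_r$ of $G$ partition the vertex set: being $2$-edge-connected is an equivalence relation on vertices, bridges join distinct blocks, and the remaining single vertices (those all of whose incident edges are bridges) are trivial blocks, so $V(G)=\bigsqcup_{i=1}^{r} V(\textbf{B}_i)$. Since $f_t$ is saturated we have $V_t=V(G)$, so by the remark preceding this theorem the only valid elementary movements are the cycle permutations $\sigma_p$, and conversely every cycle $p$ of $G$ gives a valid $\sigma_p$ because the occupied set never changes and connectivity is automatic. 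The crucial observation is that a bridge lies on no cycle, so each cycle $p$ is contained in a single edge-block; hence every generator $\sigma_p$ of $\Phi[f_t]$ is a permutation supported on a single $V(\textbf{B}_i)$.

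Next I would assemble the direct product. Let $\Gamma_i$ be the subgroup of $\Phi[f_t]$ generated by the $\sigma_p$ with $p\subseteq \textbf{B}_i$; this is exactly the Wilson group of the saturated configuration induced on $\textbf{B}_i$. Because the sets $V(\textbf{B}_i)$ are pairwise disjoint, permutations coming from different blocks have disjoint support, hence commute, and a nonidentity element supported on $V(\textbf{B}_i)$ cannot coincide with a product of elements supported off $V(\textbf{B}_i)$. Combined with the fact that the $\sigma_p$ generate all of $\Phi[f_t]$, this yields the internal direct product $\Phi[f_t]=\prod_{i=1}^{r}\Gamma_i$. Theorem \ref{teo30} confirms the same confinement at the level of the full group, since the orbit of any $v\in V(\textbf{B}_i)$ is exactly $V(\textbf{B}_i)$, so no element of $\Phi[f_t]$ transports a vertex out of its block.

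It then remains to identify each $\Gamma_i$, and here the earlier results do all the work. If $\textbf{B}_i$ is a single vertex then $\Gamma_i$ is trivial, and if $\textbf{B}_i$ is a cycle of length $m$ then $\Gamma_i\cong\mathbb{Z}_m$ by Theorem \ref{teo23}; in both cases $\Gamma_i$ is cyclic. If $\textbf{B}_i$ is not a cycle, then Lemma \ref{lemma34} applies to the saturated configuration on $\textbf{B}_i$ and gives $\Gamma_i=A_{V(\textbf{B}_i)}$ when $\textbf{B}_i$ is weak with every cycle odd, and $\Gamma_i=S_{V(\textbf{B}_i)}$ otherwise. In every case $\Gamma_i$ is cyclic, alternating, or symmetric, which is precisely the assertion.

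I expect the main obstacle to be making the direct-product step fully rigorous, rather than the identification of the factors, which is immediate from the lemmas. The delicate point is to argue that \emph{every} element of $\Phi[f_t]$, not merely every generator, decomposes uniquely as $\gamma_1\cdots\gamma_r$ with $\gamma_i\in\Gamma_i$; this requires combining the generation statement with the disjoint-support commutation and with the triviality of the intersection of each $\Gamma_i$ with the subgroup generated by the remaining factors. One must also take care to include the degenerate blocks arising from bridges, so that the product ranges over all edge-blocks and the resulting trivial factors are correctly accounted for.
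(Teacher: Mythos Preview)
Your proposal is correct and follows essentially the same route as the paper: decompose $G$ into its edge-blocks, use Theorem~\ref{teo30} to see that each $V(\textbf{B}_i)$ is an orbit (so $\Phi[f_t]$ splits as a direct product over the blocks), and then invoke Theorem~\ref{teo23} and Lemma~\ref{lemma34} to identify each factor as cyclic, alternating, or symmetric. The paper's own proof is only three sentences and treats the direct-product step as already established by the remark preceding the theorem, whereas you spell out explicitly why the generators $\sigma_p$ have support confined to a single block and why this forces the internal direct product; your added care there is warranted but not a different idea.
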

\begin{proof}
Let $\textbf{B}_i$ be the set of edge-blocks $G$, for $i\in\{1,\dots,r\}$. By Theorem \ref{teo30}, the non-trivial orbits are $V(\textbf{B}_i)$ for each $i\in\{1,\dots,r\}$. Hence, for the set $V(\textbf{B}_i)$, its Wilson group is cyclic, by Theorem \ref{teo23}, or it is an alternating group or a symmetric group by Lemma \ref{lemma34} and the result follows.
\end{proof}

\section{No-saturated configurations}\label{section5}

In this section, we only study the Wilson group for no-saturated configurations. The main difference between saturated and no-saturated configurations is the existence of valid movements given by paths. Theorem \ref{teo24} is an example of this fact.

To begin with, we analyze the behavior of the bipartite complete graph $K_{1,3}$, also called as the 3-star graph, which is a relevant graph in no-saturated configurations.

Let $G$ be a graph containing at least a 3-star subgraph and $f_t$ a connected configuration over $G$ such that

\[f_t=\left(
	\begin{array}{@{\extracolsep{-2mm}}cccccc}
	\dots&v&v_1&u_1&w_1&\dots\\
	\dots&1&2&3&4&\dots
	\end{array}\hspace{-2mm}\right)\]
where $v$ is a vertex of degree at least 3 and $v_1$, $u_1$ and $w_1$ are adjacent to $v$. Figure \ref{fig08} shows the sequence of movements to generate the transposition $(vv_1)$ supposing some empty vertices.
\begin{figure}[!htbp]
\begin{center}
\includegraphics{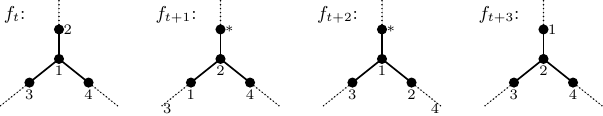}
\caption{Movements over a 3-star.}\label{fig08}
\end{center}
\end{figure}
Before to verify the details to produce the movements to generate such transposition, we give the following definition.
\begin{definition}
Let $G$ be a graph and $uv$ an edge of $G$. A vertex $w$ is \emph{in the direction of $u$ with respect to $v$} if there exists a $wv$-path containing $u$.
\end{definition}

Therefore, the set of vertices of the component of $G-v$ containing $u$ are the vertices in the direction of $u$ with respect to $v$. If $v$ is not a cut-vertex, each vertex is in the direction of $u$ with respect to $v$, for every $u$ in $N(v)$. The set of empty vertices in the direction of $u$ with respect to $v$ is denote by $B_v[f_t](u)$ and its cardinality by $b_v[f_t](u)$ (or simply by $B_v(u)$ and $b_v(u)$, respectively, when $f_t$ is understood), see Figure \ref{fig09}.

\begin{figure}[!htbp]
\begin{center}
\includegraphics{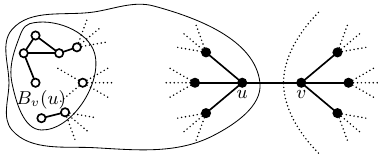}
\caption{A set $B_v(u)$.}\label{fig09}
\end{center}
\end{figure}

\begin{proposition}
Let $G$ be a connected graph. If $G[B_v(u)]$ is a tree, then there exists a spanning tree of $G$ containing $G[B_v(u)]$.
\end{proposition}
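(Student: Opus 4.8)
The plan is to deduce this from the classical fact that any acyclic subgraph of a connected graph extends to a spanning tree, applied to the edge set $F := E(G[B_v(u)])$. The hypothesis that $G[B_v(u)]$ is a tree enters in exactly one way: it guarantees that $F$ is acyclic. This is necessary, since any spanning tree is acyclic and so cannot contain a subgraph harboring a cycle; and it is also sufficient, because acyclicity is all the extension argument requires. (In particular the connectedness of $G[B_v(u)]$ is not really used, only that it is a forest.)

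First I would set up the family $\mathcal{F}$ of all spanning subgraphs $T$ of $G$, meaning $V(T)=V(G)$, that are acyclic and satisfy $F\subseteq E(T)$. This family is nonempty: the subgraph with vertex set $V(G)$ and edge set exactly $F$ is acyclic, since on $B_v(u)$ it coincides with the tree $G[B_v(u)]$ while the remaining vertices of $G$ occur as isolated vertices. Because $G$ is finite, I can then pick a member $T\in\mathcal{F}$ with the maximum number of edges.

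Next I would show this maximal $T$ is connected, hence a spanning tree. If it were not connected, it would have at least two components, and since $G$ itself is connected there would be an edge $e\in E(G)$ joining two distinct components of $T$. Adjoining $e$ cannot create a cycle, its endpoints lying in different components, so $T+e$ is again acyclic, spanning, and contains $F$; thus $T+e\in\mathcal{F}$ with one more edge than $T$, contradicting maximality. Therefore $T$ is a connected, acyclic, spanning subgraph of $G$, i.e.\ a spanning tree, and by construction $E(G[B_v(u)])=F\subseteq E(T)$ with $V(G[B_v(u)])\subseteq V(G)=V(T)$, so $G[B_v(u)]$ is a subgraph of $T$, as required.

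The argument is essentially routine and I do not foresee a serious obstacle. The single point that must be stated carefully is that adjoining a cross-component edge preserves acyclicity, which is precisely where the connectedness of $G$ and the maximality of $T$ interact, forcing an edge-maximal acyclic spanning subgraph of a connected graph to be connected. If a more compact phrasing is preferred, the same conclusion is immediate from matroid theory: $F$ is an independent set of the cycle matroid of $G$, every independent set extends to a basis, and the bases are exactly the spanning trees of $G$.
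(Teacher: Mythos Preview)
Your argument is correct. The paper actually states this proposition without proof, treating it as a routine fact, so there is no paper proof to compare against; your standard ``extend an acyclic edge set to a spanning tree by taking a maximal acyclic spanning subgraph containing it'' argument (equivalently, the matroid formulation you mention) is exactly the elementary justification the authors are implicitly relying on.
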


\begin{lemma}\label{lemma38}
Let $f_t$ be a connected $[k]$-configuration over a graph $G$ and $(w,v,u)$ a path such that $u,v\in V_t$. If $b_v(w)>0$ then there exists a cycle or a path $p$ containing $(w,v,u)$ for which $\sigma_p$ is in $\Gamma [f_t]$.
\end{lemma}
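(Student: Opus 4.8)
The plan is to produce, in every case, either a cycle $p$ of occupied vertices through $(w,v,u)$ — which is automatically valid because a cycle movement fixes the occupied set, so $G[V_{t+1}]=G[V_t]$ stays connected — or a path $p$ whose empty end is the ``hole'' and whose occupied end is a leaf of a spanning tree, so that validity again follows from the fact that deleting a leaf of a spanning tree preserves connectivity. This is the same device used in the proof of Theorem~\ref{teo16}.

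First I would set $H$ to be the component of $G-v$ containing $w$; the hypothesis $b_v(w)>0$ says exactly that $H$ contains an empty vertex. Then I split into two cases according to whether $w$ and $u$ are joined by a path of occupied vertices inside $H$ (equivalently, avoiding $v$). If such a path $R$ exists, then $R$ together with the edges $vu$ and $vw$ forms a simple cycle $C$ all of whose vertices lie in $V_t$; since $v$ is adjacent in $C$ to both $w$ and $u$, reading $C$ in the appropriate direction as $(w,v,u,\dots)$ shows it contains $(w,v,u)$, and $p=C$ works.

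In the remaining case I build a path. If $w$ is empty I take $e'=w$; if $w$ is occupied I use $b_v(w)>0$ to locate a reachable empty vertex: letting $S_w$ be the occupied component of $w$ in $G[V_t\cap V(H)]$, the facts that $H$ is connected and contains an empty vertex force, by a boundary argument, an edge from $S_w$ to an empty vertex $e'$, whence a path $P_1$ inside $S_w\cup\{e'\}$ runs from $e'$ to $w$ with $e'$ its only empty vertex. Concatenating $P_1$ with the edges $wv$ and $vu$ gives a walk $Q$ from $e'$ through $(w,v,u)$; this $Q$ is a \emph{simple} path precisely because we are not in the first case (an occurrence of $u$ on $P_1$ would hand us an occupied $w$--$u$ path inside $H$). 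I then extend $Q$ to a spanning tree $T'$ of $G[V_t\cup\{e'\}]$ keeping $e'$ a leaf, root $T'$ at $e'$, and let $p$ be the tree-path from $e'$ down to any leaf $y$ in the subtree hanging from $u$; it contains $(w,v,u)$, its only empty vertex is $e'$, so it is a valid $r$-path, and $f_{t+1}$ is connected because $T'-y$ spans $V_{t+1}=V(T')\setminus\{y\}$.

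The step I expect to be the crux is guaranteeing validity, i.e.\ connectivity of $f_{t+1}$: a naive path ending at $u$ can disconnect the occupied set when $u$ is a cut vertex, so the whole construction is arranged so that the hole comes to rest either at no vertex at all (the cycle case) or at a spanning-tree leaf (the path case). The secondary technical points are the existence of the reachable empty vertex $e'$ and the simplicity of $Q$, both of which are exactly what the hypothesis $b_v(w)>0$ and the case distinction are there to supply.
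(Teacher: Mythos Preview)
Your argument is correct and follows essentially the same approach as the paper: locate an empty vertex in the direction of $w$, close up a cycle of occupied vertices through $(w,v,u)$ when possible, and otherwise run the hole-to-leaf path inside a spanning tree of $G[V_t]$ so that removing the terminal leaf preserves connectivity. The only cosmetic difference is the case split---the paper first fixes a specific path $p_1=(w_r,\dots,w_1{=}w,v)$ to an empty vertex and checks whether $u$ already sits on it, whereas you test for \emph{any} occupied $w$--$u$ path inside the component $H$; both criteria serve exactly the same purpose of guaranteeing simplicity of the extended path in the non-cycle case.
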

\begin{proof}
Since $b_v(w)>0$, there exists an empty vertex in the direction of $w$ with respect to $v$. Therefore, there is an $r$-path $p_1=(w_r,\dots,w_1=w,v)$ with $w_r$ an empty vertex.

On one hand, if $u=w_i$ for some $i\in [r-1]$ then for the cycle $p=(w_1=w,v,u=w_i,w_{i-1},\dots,w_2)$, $\sigma_p$ is in $\Gamma [f_t]$.

On the other hand, if $u\not =w_i$ for all $i\in [r-1]$ and let $T$ a spanning tree of $G[V_t]$ containing the path $p=(w_{r-1},\dots,w_1=w,v,u=u_0,\dots,u_s)$ where $u_s$ is a leaf of $T$, and then $\sigma_p$ is in $\Gamma [f_t]$, see Figure \ref{fig10}. 

\begin{figure}[!htbp]
\begin{center}
\includegraphics{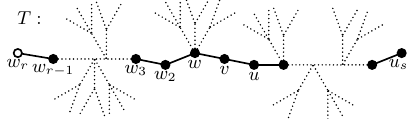}
\caption{The spanning tree in the proof of Lemma \ref{lemma38}.}\label{fig10}
\end{center}
\end{figure}
\end{proof}

\begin{theorem}\label{teo39}
Let $K_{1,3}$ be a star subgraph of $G$ with partition $(\{v\},\{u_1,v_1,w_1\})$ such that $b_v(u_1),b_v(w_1)>0$. If $v,v_1\in V_t$ and $vv_1$ is a bridge, then $(vv_1)\in \Phi[f_t]$. 
\end{theorem}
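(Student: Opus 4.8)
The plan is to produce an explicit valid movement of $V_t$ whose net permutation is exactly the transposition $(vv_1)$. Since $v,v_1\in V_t$, such a movement preserves the occupied set $V_t$, so it lands in $\Phi[f_t]$; thus the whole task reduces to building the sequence. The only resources I would use are the two ``loaded'' directions supplied by the hypotheses: $b_v(u_1)>0$ and $b_v(w_1)>0$ guarantee an empty vertex somewhere in the direction of $u_1$ and another somewhere in the direction of $w_1$ with respect to $v$, and these two holes provide all the maneuvering room around the junction $v$.

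The heart of the argument is a local three-step rotation around $v$. Suppose first that the two empties already sit at $u_1$ and $w_1$, so that locally $v,v_1$ are occupied while $u_1,w_1$ are empty. I would apply, in order, the three elementary path-movements
\[\sigma_1=(u_1\,v\,v_1),\qquad \sigma_2=(w_1\,v\,u_1),\qquad \sigma_3=(v_1\,v\,w_1),\]
each of which is a legal $3$-path movement at the moment it is used: $\sigma_1$ slides the token of $v_1$ onto $v$ and the token of $v$ onto $u_1$ (momentarily emptying $v_1$), $\sigma_2$ then uses the $w_1$-hole to shift those tokens on, and $\sigma_3$ closes the rotation using the freed $v_1$. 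A direct check of the three configurations shows that after $\sigma_3$ the tokens originally at $v$ and $v_1$ have been interchanged while $u_1$ and $w_1$ are empty again, i.e. $\sigma_1\circ\sigma_2\circ\sigma_3=(v\,v_1)$. The key point for validity is that in each of the three intermediate configurations the center $v$ stays occupied and the locally occupied set is $\{v,x\}$ for a single neighbour $x\in\{u_1,w_1,v_1\}$, so the local picture is connected at every step.

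To drop the assumption that the holes are adjacent to $v$, I would replace the preparatory part by routing each empty up its branch. Using Lemma \ref{lemma38} (applied to the paths $(u_1,v,v_1)$ and $(w_1,v,v_1)$, which satisfy its hypotheses since $b_v(u_1),b_v(w_1)>0$ and $v,v_1\in V_t$) together with the proposition that a tree $G[B_v(u)]$ extends to a spanning tree of $G$, one slides each hole along a spanning-tree path so that the vertex vacated at each instant is a leaf of the current spanning subtree; this keeps the occupied set connected and places holes at $u_1$ and $w_1$. After running the central rotation of the previous paragraph, Proposition \ref{prop9} lets me reverse the preparatory slides, returning every branch token and every hole to its original cell, so that the composite has net effect $(v\,v_1)$ and nothing else.

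The step I expect to be the main obstacle is the connectivity bookkeeping, and most delicately the instant at which $v_1$ is vacated: because $vv_1$ is a bridge, emptying $v_1$ can detach it from the rest of $G$, so one must verify that at that moment only $v_1$ itself is separated while $v$ remains occupied and keeps the two loaded branches and the remainder of $G$ joined through $v$. This is exactly where the bridge hypothesis is used, and it is also the place where one must confirm that the tails introduced by the elongated routing paths genuinely cancel under the reversal, rather than leaving a stray shift in a branch. Granting those verifications, that each slide is a legal elementary $p$-movement and that the composition is a valid movement follow directly from the definitions and from Proposition \ref{prop9}, yielding $(vv_1)\in\Phi[f_t]$.
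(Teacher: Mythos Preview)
Your three-step local rotation and your preparatory ``hole-routing'' both break down on the same connectivity obstruction, and it is not a bookkeeping detail that can be patched afterwards. Nothing in the hypotheses forces $v_1$ to be a leaf of $G[V_t]$: there may be further occupied vertices $v_2,v_3,\dots$ in the component of $G-v$ containing $v_1$, reachable from $v$ only through the bridge $vv_1$. The instant your $\sigma_1=(u_1\,v\,v_1)$ vacates $v_1$, all of those vertices are cut off, so $\sigma_1$ is not a valid elementary movement; your own caveat (``only $v_1$ itself is separated'') is exactly the assertion that fails. The same obstruction blocks the preparation: if the $u_1$-branch carries occupied vertices whose only link to $v$ is through $u_1$, then no valid sequence can make $u_1$ empty --- every elementary movement that vacates $u_1$ disconnects that branch. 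Lemma~\ref{lemma38}, which you invoke, does not produce a way to park a hole at $u_1$; it produces a \emph{long} elementary path-movement running from an empty vertex $u_s$ out in the $u_1$-direction, through $u_1,v,v_1$, and on to a spanning-tree leaf $v_r$ on the $v_1$-side.

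The paper's proof uses precisely those long movements, and this is where the bridge hypothesis actually enters: since $vv_1$ is a bridge, the $v_1$-side tails of the two paths obtained from Lemma~\ref{lemma38} (one via $u_1$, one via $w_1$) can be taken to coincide along $v_1,\dots,v_r$, so their contributions on that side cancel against each other. One then distinguishes whether the two empty endpoints $u_s$ and $w_m$ coincide (forcing a cycle through $v$) or not, and in each case composes three or four of these long movements --- each vacating only a leaf, hence valid --- whose product is exactly $(vv_1)$. Your rotation $(u_1\,v\,v_1)\,(w_1\,v\,u_1)\,(v_1\,v\,w_1)$ is the right \emph{shape}, but to make it a proof you must replace each short arc by its long-path analogue terminating at spanning-tree leaves; once you do that, you have essentially reconstructed the paper's argument.
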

\begin{proof}
Let $f_t$ be a connected $[k]$-configuration over $G$. For the paths $(u_1,v,v_1)$ and $(w_1,v,v_1)$, by Lemma \ref{lemma38}, there exist the permutations
\[\sigma_1=(u_s\dots u_{1}vv_1\dots v_r)\textrm{ and }\sigma_1'=(w_m\dots w_1vv_1\dots v_r).\]
Since $vv_1$ is a bridge, we can assume that the corresponding paths of the permutations are sharing the leaf $v_r$.

If $u_s=w_m$ then the vertex $v$ is in a cycle $(v,u_1,\dots,u_i=w_j,w_1)$, see Figure \ref{fig11}. Let $\sigma_2$ and $\sigma_3$  be the permutations
\[\sigma_2=(vu_1\dots u_i\dots w_1)\textrm{ and }\sigma_3=(v_r\dots v_1vw_1\dots u_i\dots u_s).\]
Then $\sigma=\sigma_3\circ\sigma_2\circ\sigma_1=(vv_1)\in\Phi[f_t]$.

\begin{figure}[!htbp]
\begin{center}
\includegraphics{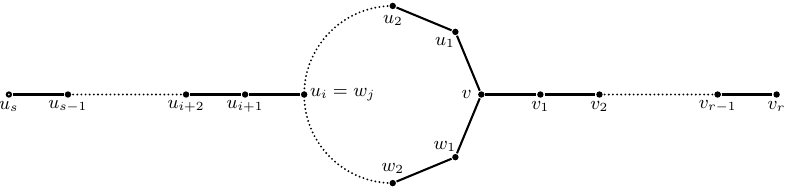}
\caption{Two paths sharing both leaves in the proof of Theorem \ref{teo39}.}\label{fig11}
\end{center}
\end{figure}

If $u_s\not=w_m$, consider the following permutations, see Figure \ref{fig12}.
\[\sigma_5=(w_m\dots w_1vu_1\dots u_s) \textrm{ and } \sigma_6=(v_r\dots v_1vw_1\dots w_m) \textrm{ and } \sigma_7=(u_sw_s).\]
Therefore $\sigma=\sigma_7\circ\sigma_6\circ\sigma_5\circ\sigma_1 \in \Phi[f_t]$ because $\sigma_7\in \Phi[f_t]$.
\begin{figure}[!htbp]
\begin{center}
\includegraphics{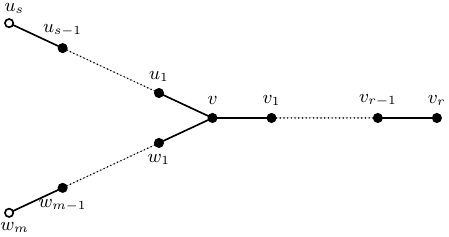}
\caption{Two paths sharing only a leaf in the proof of Theorem \ref{teo39}.}\label{fig12}
\end{center}
\end{figure}
\end{proof}

In order to analyze the Wilson group and use the previous ideas, we give the following definition.

\begin{definition}
Let $x,y\in V_t$ be and $v$ a vertex of degree 3 or more. We say that $v$ is an exchange-vertex for the pair $\{x,y\}$ if there is a vertex $v_1$ adjacent to $v$ and a valid permutation $\sigma$ such that $\sigma(v)=x$, $\sigma(v_1)=y$ and $f_t\circ\sigma$ satisfy the hypothesis of Theorem \ref{teo39}.

Moreover, if $x\sim y$ we say $v$ is an exchange-vertex for the edge $xy$.
\end{definition}

Note that if $v$ is an exchange-vertex of $\{x,y\}$ then $(xy)\in\Phi[f_t]$ because Proposition \ref{proposition20}, indeed, if $(vv_1)\in\Phi[f_t\circ\sigma]$ then $(xy)=\sigma\circ(vv_1)\circ\sigma^{-1}\in\Phi[f_t]$.

\begin{lemma}
If there exists an exchange-vertex for the pair $\{x,y\}$ then $(xy)\in\Phi[f_t]$.
\end{lemma}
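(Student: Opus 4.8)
The plan is to read the conclusion off directly from the definition of an exchange-vertex, combined with Proposition~\ref{proposition20} and Theorem~\ref{teo39}; in essence this lemma just formalizes the remark stated immediately before it. First I would unpack the hypothesis: since $v$ is an exchange-vertex for $\{x,y\}$, there is a neighbour $v_1$ of $v$ and a valid movement $\sigma$ of $V_t$ with $\sigma(v)=x$, $\sigma(v_1)=y$, and such that $f_t\circ\sigma$ satisfies the hypotheses of Theorem~\ref{teo39}. I would then apply that theorem to the configuration $f_t\circ\sigma$, whose underlying star $K_{1,3}$ has the required branches with positive empty-vertex counts and for which $vv_1$ is a bridge with both endpoints occupied, to obtain $(vv_1)\in\Phi[f_t\circ\sigma]$. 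Note that $v,v_1$ are indeed occupied in $f_t\circ\sigma$, since $(f_t\circ\sigma)(v)=f_t(x)\in[k]$ and $(f_t\circ\sigma)(v_1)=f_t(y)\in[k]$ because $x,y\in V_t$.

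Next I would transport this transposition back to the original configuration by conjugation. Taking $\phi=(vv_1)\in\Phi[f_t\circ\sigma]$ in Proposition~\ref{proposition20} yields $\sigma\circ(vv_1)\circ\sigma^{-1}\in\Phi[f_t]$, and it then remains only to simplify the conjugate. Under the paper's composition convention (so that $(f_t\circ\sigma)(w)=f_t(\sigma(w))$, as used throughout Section~\ref{section2}), conjugation relabels the two moved points through $\sigma$: for any transposition $(ab)$ one has $\sigma\circ(ab)\circ\sigma^{-1}=(\sigma(a)\,\sigma(b))$. Specializing to $a=v$, $b=v_1$ with $\sigma(v)=x$, $\sigma(v_1)=y$ gives $\sigma\circ(vv_1)\circ\sigma^{-1}=(xy)$, and hence $(xy)\in\Phi[f_t]$.

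There is essentially no hard step here; the lemma is a bookkeeping consequence of two results already established. The only point that requires care is the direction of the conjugation formula, which must be matched to the left-composition convention fixed just before the definition of $\sigma_p$. I would settle this by checking it on a single point, namely that $\sigma\circ(ab)\circ\sigma^{-1}$ sends $\sigma(a)\mapsto\sigma(b)$, which pins down $(\sigma(a)\,\sigma(b))$ unambiguously. A second item worth recording explicitly, rather than leaving implicit, is that the movement $\sigma$ supplied by the exchange-vertex hypothesis is a genuine valid movement of $V_t$, so that Proposition~\ref{proposition20} applies verbatim; this is exactly what the definition guarantees, so no extra argument is needed.
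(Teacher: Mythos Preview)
Your proof is correct and follows exactly the same route as the paper: the paper's own ``proof'' is the one-line remark immediately preceding the lemma, which invokes Proposition~\ref{proposition20} to get $\sigma\circ(vv_1)\circ\sigma^{-1}\in\Phi[f_t]$ from $(vv_1)\in\Phi[f_t\circ\sigma]$ and then identifies this conjugate with $(xy)$. Your write-up simply fills in the details of that remark, including the verification of the conjugation direction, and adds nothing essentially different.
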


\begin{theorem}\label{teo40}
If $(u_1,u_2,\dots,u_m)$ is a path such that for each edge there is an exchange vertex, then $(u_1u_m)\in\Phi[f_t]$.
\end{theorem}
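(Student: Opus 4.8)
The plan is to reduce the statement to a purely group-theoretic fact about transpositions, using the two tools already in place: the preceding Lemma, which says that the existence of an exchange-vertex for a pair $\{x,y\}$ forces $(xy)\in\Phi[f_t]$, and the theorem asserting that $(\Phi[f_t],\circ)$ is a group.

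First I would unpack the hypothesis. Since each edge $u_iu_{i+1}$ of the path admits an exchange-vertex, the preceding Lemma gives $(u_iu_{i+1})\in\Phi[f_t]$ for every $i\in[m-1]$. Note that the definition of exchange-vertex requires its pair to lie in $V_t$, so all of $u_1,\dots,u_m$ are occupied vertices; moreover, being the vertices of a path, they are pairwise distinct, so each $(u_iu_{i+1})$ is a genuine transposition of elements of $V_t$.

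Second, I would build up the target transposition by induction on $j$, proving the stronger claim that $(u_1u_j)\in\Phi[f_t]$ for all $j$ with $2\le j\le m$. The base case $j=2$ is exactly the membership $(u_1u_2)\in\Phi[f_t]$ obtained above. For the inductive step, I use the conjugation identity $(u_1u_{j+1})=(u_ju_{j+1})\circ(u_1u_j)\circ(u_ju_{j+1})$, valid because conjugating $(u_1u_j)$ by the involution $(u_ju_{j+1})$ fixes $u_1$ and carries $u_j$ to $u_{j+1}$ (here the distinctness of the $u_i$ is what makes the right-hand side equal $(u_1u_{j+1})$ rather than a degenerate product). Both factors $(u_ju_{j+1})$ and $(u_1u_j)$ lie in $\Phi[f_t]$ — the latter by the inductive hypothesis — and since $\Phi[f_t]$ is closed under composition and inverses, the whole product lies in $\Phi[f_t]$. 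Taking $j=m$ yields $(u_1u_m)\in\Phi[f_t]$, as desired.

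There is essentially no genuine obstacle here: once the hypothesis is translated through the Lemma into a family of adjacent transpositions in $\Phi[f_t]$, the group property reduces everything to the elementary fact that such adjacent transpositions generate every transposition between endpoints of the path. The only point needing a moment of care is the distinctness of $u_1,\dots,u_m$, which guarantees the conjugation formula behaves as claimed, and this is automatic for a path. Alternatively, one could bypass the induction entirely and exhibit the palindromic product $(u_1u_m)=(u_{m-1}u_m)\circ\cdots\circ(u_2u_3)\circ(u_1u_2)\circ(u_2u_3)\circ\cdots\circ(u_{m-1}u_m)$, each factor of which lies in $\Phi[f_t]$; but the inductive conjugation argument is cleaner to verify and avoids bookkeeping of the composition order.
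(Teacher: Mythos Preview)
Your proposal is correct and follows essentially the same approach as the paper: invoke the preceding Lemma to obtain each adjacent transposition $(u_iu_{i+1})\in\Phi[f_t]$, then use closure of the group under composition to build $(u_1u_m)$. The paper in fact uses exactly the palindromic product you mention as your alternative, writing $(u_1u_2)\circ(u_2u_3)\circ\cdots\circ(u_{m-1}u_m)\circ(u_{m-1}u_{m-2})\circ\cdots\circ(u_2u_1)=(u_1u_m)$ directly, rather than your inductive conjugation; the two are trivially equivalent.
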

\begin{proof}
Since each transposition $(u_iu_{i+1})\in\Phi[f_t]$ with $i\in [m-1]$ and therefore \[(u_1u_2)\circ(u_2u_3)\circ\dots\circ (u_{m-1}u_m)\circ(u_{m-1}u_{m-2})\circ\dots\circ (u_2u_1)=(u_1u_m)\in\Phi[f_t] .\]
\end{proof}

Now, we analyze the edge-blocks of a graph $G$ when it is not a cycle and $f_t$ is a no-saturated configuration.

\begin{lemma}\label{lemma43}
If $\textbf{B}$ is an edge-block of $G[V_t]$ for which every vertex has weight 1, then for each edge $xy\in E(\textbf{B})$ there exists an exchange-vertex for $xy$.
\end{lemma}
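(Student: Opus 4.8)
The plan is to verify the definition of exchange-vertex directly: for a given edge $xy\in E(\textbf{B})$ I will produce a vertex $v$ of degree at least $3$, a neighbor $v_1$ of $v$, and a valid movement $\sigma$ with $\sigma(v)=x$ and $\sigma(v_1)=y$ such that $f_t\circ\sigma$ meets the hypotheses of Theorem \ref{teo39}. Once this is in hand, Theorem \ref{teo39} gives $(vv_1)\in\Phi[f_t\circ\sigma]$, and Proposition \ref{proposition20} yields $(xy)=\sigma\circ(vv_1)\circ\sigma^{-1}\in\Phi[f_t]$, which is exactly what it means for $v$ to be an exchange-vertex for $xy$. So the entire lemma is a statement about the \emph{existence} of a suitable reconfiguration $\sigma$.

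First I would record two structural facts. Since $\textbf{B}$ is an edge-block it is bridgeless and connected, hence $2$-edge-connected, so the edge $xy$ lies on a cycle $C\subseteq\textbf{B}$. Second, because $f_t$ is no-saturated we have $V_\emptyset\neq\emptyset$, and since $G$ is connected a vertex of degree at least $3$ is available: if $\textbf{B}$ is not a cycle I may take a branch vertex of $\textbf{B}$, while if $\textbf{B}$ is a cycle then, as the empty vertices lie outside $\textbf{B}$, some vertex $v\in V(\textbf{B})$ has a neighbor off $C$ and therefore $\deg_G(v)\geq 3$. In either case Lemma \ref{lemma38} lets me route an empty vertex along a prescribed direction, and this is the mechanism I will use to open up empty directions at $v$.

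The core of the argument is the construction of $\sigma$. Using the cycle $C$ together with repeated applications of Lemma \ref{lemma38}, I would rotate labels along $C$ and along the path joining $v$ to the empty region so that, in the configuration $f_t\circ\sigma$: the labels originally at $x$ and $y$ occupy $v$ and $v_1$; the edge $vv_1$ is a bridge with $v_1$ occupied; and two of the remaining directions at $v$ contain empty vertices, that is $b_v(u_1),b_v(w_1)>0$. Each intermediate step must be a valid elementary movement, so connectivity of the occupied subgraph is preserved throughout; this is exactly what invoking Lemma \ref{lemma38} for each individual rotation guarantees. With these three conditions met, Theorem \ref{teo39} applies and the conclusion follows as above.

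The main obstacle is precisely this last construction. Because $\textbf{B}$ is $2$-edge-connected it contains no bridge internally, so the bridge $vv_1$ of Theorem \ref{teo39} must be reached through the region outside $\textbf{B}$ (the label of $y$ is pushed across a bridge of $G$ to $v_1$), and this has to be arranged \emph{simultaneously} with creating two distinct empty directions at $v$ and without ever disconnecting the occupied subgraph at any intermediate configuration. The delicate bookkeeping—checking that enough empty vertices can be funneled into the two required directions, in the correct order, by valid movements—is where the no-saturated hypothesis and Lemma \ref{lemma38} do the real work, and it is the step I expect to demand the most care.
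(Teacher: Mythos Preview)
Your plan is headed in the right direction, but it leaves the hardest step unproved and misses the shortcut that the paper uses to avoid it entirely.

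The paper's proof does not build $\sigma$ from scratch via repeated applications of Lemma~\ref{lemma38}. Instead it exploits the fact that $\textbf{B}$, being a bridgeless block on which every vertex has weight~$1$, is itself a saturated configuration, so Theorem~\ref{teo23} and Lemma~\ref{lemma34} already describe its Wilson group. Any cycle rotation inside $\textbf{B}$ is automatically a valid elementary movement of $f_t$ (it does not change $V_t$), so that group embeds in $\Phi[f_t]$. Hence one can, in a single stroke, carry the edge $xy$ onto the edge $uv$, where $v\in V(\textbf{B})$ is chosen to have an outside neighbour $v_1$ with $b_v(v_1)>0$ and $u,w$ are the two $\textbf{B}$--neighbours of $v$ on a cycle. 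After this, one application of Lemma~\ref{lemma38} to the path $(v_1,v,u)$ pushes $y$'s label onto $v_1$ and $x$'s label onto $v$, and the hypotheses of Theorem~\ref{teo39} are met. All of the ``delicate bookkeeping'' you anticipate is absorbed into the already--proved saturated case.

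You also overestimate the difficulty of obtaining two empty directions at $v$. Because $u$ and $w$ lie on a common cycle of $\textbf{B}$ through $v$, they belong to the same component of $G-v$; thus $B_v(u)=B_v(w)$, and a single empty vertex created by the $\sigma_p$ of Lemma~\ref{lemma38} simultaneously gives $b_v(u)>0$ and $b_v(w)>0$. There is no need to manufacture two independent empty branches. With this observation and the use of the saturated results on $\textbf{B}$, the construction you were bracing for collapses to two moves.
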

\begin{proof}
Since $f_t$ is a no-saturated configuration there is a vertex $v_1\not\in V(\textbf{B})$ adjacent to some vertex $v\in V(\textbf{B})$ such that $b_v(v_1)>0$. Since $\textbf{B}$ is a bridgeless subgraph, let $u,w$  adjacent to $v$ in $\textbf{B}$ living in a cycle there. Note that $v$ has degree at least 3.

Let $xy$ be an edge of $\textbf{B}$. Without loss of generality, we can assume that $(xu)\circ(yv)\in\Phi[f_y]$ by Theorem \ref{teo23} and Lemma \ref{lemma34}. Let $f_s=f_t\circ(xu)\circ(yv)$.

Note that the path $(v_1,v,u)$ satisfies the hypothesis of Lemma \ref{lemma38}. Hence, there exists a cycle or a path $p$ containing $(v_1,v,u)$ for which $\sigma_p\in\Gamma[f_s]$ with $\sigma_p(v_1)=v$ and $\sigma_p(v)=u$. Moreover, if $vv_1$ is not a bridge, we can delete edges adjacent to $v$ but not in $E(\textbf{B})$, $b_v[f_s\circ\sigma_p](w)>0$ and $b_v[f_s\circ\sigma_p](u)>0$. By Theorem \ref{teo39}, the vertex $v$ is an exchange-vertex for $xy$.

\begin{figure}[!htbp]
\begin{center}
\includegraphics{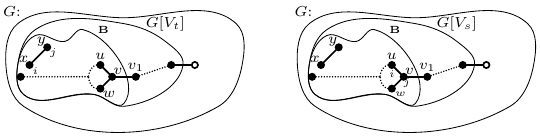}
\caption{The vertex $v$ is an exchange-vertex of $xy$.}\label{fig13}
\end{center}
\end{figure}
\end{proof}

The previous result can be established for vertex-blocks instead of edge-blocks because if $\textbf{B}$ is a vertex-block of $G[V_t]$ for which every vertex has weight 1, consider an edge $vv_1\in E(\textbf{B})$. If $vv_1$ is not a bridge, $vv_1$ is in an edge-block and there exists an exchange-vertex for $vv_1$. And if $vv_1$ is a bridge following the hypothesis of Theorem \ref{teo39}, then $v$ its an exchange-vertex for $vv_1$.

\begin{theorem}\label{teo44}
Let $xy$ be an edge for which its vertices have weight 1. If $xy$ is not a bridge an $G$ is not a cycle, then there exists an exchange-vertex for $xy$.
\end{theorem}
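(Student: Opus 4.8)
The plan is to exhibit an exchange-vertex for $xy$; this is exactly the conclusion sought, and, as recorded in the remark after the definition of exchange-vertex, it also forces $(xy)\in\Phi[f_t]$. Since $xy$ is not a bridge of $G$, it lies on a cycle $C$ of $G$, and I would split the argument according to whether $xy$ is a bridge of the occupied subgraph $G[V_t]$.

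First I would dispose of the case in which $xy$ is \emph{not} a bridge of $G[V_t]$. Here $xy$ lies on a cycle of $G[V_t]$ and hence belongs to some edge-block $\textbf{B}$ of $G[V_t]$; since every vertex of $G[V_t]$ has weight $1$ by definition of $V_t$, the block $\textbf{B}$ satisfies the hypothesis of Lemma \ref{lemma43}, which immediately provides an exchange-vertex for $xy$.

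The substantive case is when $xy$ is a bridge of $G[V_t]$ while not being a bridge of $G$. Then the $xy$-path $C-xy$ must leave $G[V_t]$, so it contains an empty vertex; this is the empty vertex I would exploit. Because $G$ is connected, has a cycle, and is not a cycle, not every vertex can have degree $2$, so there is a vertex $v$ of degree at least $3$. Using the reachability guaranteed by Corollary \ref{cor17}, I would choose a valid movement $\sigma$ carrying the labels originally at $x$ and $y$ to $v$ and to an adjacent vertex $v_1$, so that $\sigma(v)=x$ and $\sigma(v_1)=y$; the empty vertex found above is used, through Lemma \ref{lemma38}, to produce in $f_t\circ\sigma$ two further neighbours $u_1,w_1$ of $v$ with $b_v(u_1)>0$ and $b_v(w_1)>0$. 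Exactly as in the proof of Lemma \ref{lemma43}, if $vv_1$ is not already a bridge I would delete the edges at $v$ lying outside the chosen copy of $K_{1,3}$; valid movements of the resulting subgraph remain valid in $G$, so any transposition obtained there still lies in $\Phi[f_t]$. Theorem \ref{teo39} then yields $(vv_1)\in\Phi[f_t\circ\sigma]$, and Proposition \ref{proposition20} gives $(xy)=\sigma\circ(vv_1)\circ\sigma^{-1}\in\Phi[f_t]$, so $v$ is an exchange-vertex for $xy$.

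The hard part is this second case: I must simultaneously route the labels of $x$ and $y$ into the adjacent positions $v,v_1$ and manufacture two independent empty directions at $v$ out of the single empty vertex supplied by $C$, all while keeping every intermediate induced subgraph connected. Coaxing two empty directions from one empty vertex, by sliding labels around $C$ and invoking Lemma \ref{lemma38}, and checking that the preparatory movement $\sigma$ stays valid throughout is the crux; here the no-saturated hypothesis is precisely what makes empty directions available, while the assumption that $G$ is not a cycle is precisely what supplies the branch-vertex $v$.
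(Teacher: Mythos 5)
Your first case (when $xy$ lies in an edge-block of $G[V_t]$, i.e.\ is not a bridge of the occupied subgraph) matches the paper and is fine via Lemma \ref{lemma43}. The second case, however, contains a genuine gap. Corollary \ref{cor17} only asserts the existence of a valid movement between two \emph{unlabeled} configurations, i.e.\ it matches the vertex sets $V_t$ and $V_s$; it gives no control whatsoever over where individual labels end up. Invoking it to produce a valid $\sigma$ with $\sigma(v)=x$ and $\sigma(v_1)=y$ for a prescribed adjacent pair $v,v_1$ is exactly the kind of label-controlled transitivity that the paper is in the middle of establishing, so the step is both unjustified and essentially circular. You then compound this by declaring the construction of two independent empty directions at $v$, and the verification that $\sigma$ stays valid, to be ``the crux'' --- and leaving it undone. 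A proof that names its hardest step and defers it is not a proof.

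The paper avoids both difficulties by a more local argument: since $xy$ is not a bridge of $G$ it lies on a cycle $C$ of $G$, and since $G$ is connected and is not a cycle, some vertex $u_i$ of $C$ itself has degree at least $3$ (either a vertex outside $C$ attaches to $C$, or $C$ has a chord). Because $xy$ is a bridge of $G[V_t]$ in this case, $C$ must contain an empty vertex --- you correctly spotted this --- and that empty vertex lets one \emph{slide the adjacent pair of labels at $x,y$ along $C$} until they occupy $u_{i-1},u_i$; this is a controlled elementary operation that keeps the two labels adjacent at every step, unlike an appeal to Corollary \ref{cor17}. Theorem \ref{teo39} is then applied at $u_i$, whose extra neighbour supplies the needed third branch of the $K_{1,3}$. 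To repair your argument, replace the arbitrary degree-$3$ vertex and the appeal to Corollary \ref{cor17} with this choice of $u_i\in V(C)$ and the explicit rotation of labels around $C$.
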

\begin{proof}
If $xy$ belongs to an edge-block of $G[V_t]$, by Lemma \ref{lemma43}, the result is done. Now assume to the contrary. By hypothesis, $xy$ belongs to a cycle $C=(u_1=x,u_2=y,\dots,u_s)$ of $G$ and $u_i$ has degree at least 3 for some $i\in [s]$.

Via a valid movement $\sigma$, we can move the edge $xy$ to the edge $u_iu_{i-1}$, see Figure \ref{fig14}, and by Theorem \ref{teo39}, $u_i=\sigma(y)$ (or $u_i=\sigma(x)$) is an exchange-vertex of $\sigma(x)\sigma(y)$, then $u_i$ is an exchange-vertex of $xy$.

\begin{figure}[!htbp]
\begin{center}
\includegraphics{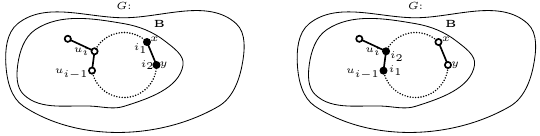}
\caption{The vertex $v$ is an exchange-vertex of $xy$.}\label{fig14}
\end{center}
\end{figure}
\end{proof}

\begin{theorem}\label{teo43}
If $u$ and $v$ are two different vertices of an edge-block $\textbf{B}$ of a graph $G$, both of weight 1 and $G$ is not a cycle, then $(uv)\in\Phi[f_t]$
\end{theorem}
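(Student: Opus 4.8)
The plan is to reduce to the case where $u$ and $v$ are \emph{adjacent} in $\textbf{B}$, which is settled at once by Theorem~\ref{teo44}. Since $\textbf{B}$ is an edge-block it is bridgeless, so every edge of $\textbf{B}$ lies on a cycle of $\textbf{B}\subseteq G$ and therefore is not a bridge of $G$. Hence, if $u$ and $v$ are adjacent in $\textbf{B}$, the edge $uv$ is a non-bridge whose two endpoints have weight $1$, and as $G$ is not a cycle, Theorem~\ref{teo44} yields an exchange-vertex for $uv$; by the remark following the definition of exchange-vertex (which rests on Proposition~\ref{proposition20}) this already gives $(uv)\in\Phi[f_t]$. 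So all the work is in the non-adjacent case.

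For the non-adjacent case I would first take a path $P=(u=u_1,u_2,\dots,u_m=v)$ contained in $\textbf{B}$; by the observation above each edge $u_iu_{i+1}$ is a non-bridge of $G$. If every vertex of $P$ has weight $1$, then Theorem~\ref{teo44} furnishes an exchange-vertex for each edge of $P$, and Theorem~\ref{teo40} immediately gives $(uv)=(u_1u_m)\in\Phi[f_t]$. Thus it suffices to guarantee a \emph{fully occupied} $u$--$v$ path inside $\textbf{B}$, or, what amounts to the same thing, to bring the two robots standing on $u$ and $v$ onto the two ends of one edge of $\textbf{B}$.

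When no such occupied path is available, I would obtain one by a preliminary valid movement and then transport the resulting transposition back with the conjugation principle of Proposition~\ref{proposition20}. Concretely, using that $f_t$ is no-saturated (so empty vertices exist) and that $\textbf{B}$ is $2$-edge-connected, one routes the robot on $v$ towards $u$ along $\textbf{B}$, one elementary movement at a time, until it occupies a neighbour of $u$, keeping $G[V_s]$ connected throughout; this is the single-block analogue of the routing carried out in the proofs of Theorem~\ref{teo16} and Theorem~\ref{teo30}. Writing $\sigma$ for the resulting valid movement of $V_t$ and $f_s=f_t\circ\sigma$, the vertices $a=\sigma^{-1}(u)$ and $b=\sigma^{-1}(v)$ are occupied in $f_s$ and adjacent in $\textbf{B}$, so the adjacent case applied to $f_s$ gives $(ab)\in\Phi[f_t\circ\sigma]$. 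Proposition~\ref{proposition20} then yields
\[\sigma\circ(ab)\circ\sigma^{-1}=(\sigma(a)\,\sigma(b))=(uv)\in\Phi[f_t].\]

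The main obstacle is precisely this routing step: producing a valid movement that makes the robots at $u$ and $v$ adjacent while never disconnecting the swarm. The easy situation is when $\textbf{B}$ is fully occupied along some $u$--$v$ path, for then the direct argument through Theorems~\ref{teo44} and~\ref{teo40} applies with no movement at all; the work is concentrated in the opposite situation, where the empty vertices forced inside $\textbf{B}$ supply the room to slide robots along its cycles, and the degree-$\ge 3$ vertices guaranteed by ``$G$ is not a cycle'' let the $3$-star machinery of Theorem~\ref{teo39} preserve connectivity. I expect the verification that this sliding can always be completed, rather than the algebra of the conjugation, to be the delicate point.
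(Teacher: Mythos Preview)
Your core argument is exactly the paper's: find a $u$--$v$ path inside $\textbf{B}$ all of whose vertices have weight $1$, apply Theorem~\ref{teo44} to every edge, and finish with Theorem~\ref{teo40}. Where you diverge is in believing that such a fully occupied path might fail to exist, which leads you into an unnecessary and admittedly vague routing argument.

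The missing observation is this. Since $G[V_t]$ is connected and $u,v\in V_t$, there is a simple $u$--$v$ path $P$ in $G[V_t]$; every vertex of $P$ has weight~$1$ by construction. Moreover $P\subseteq\textbf{B}$ automatically: distinct edge-blocks are vertex-disjoint (the union of two bridgeless connected subgraphs sharing a vertex is again bridgeless and connected, contradicting maximality), so the only edges of $G$ leaving $\textbf{B}$ are bridges, and because the edge-block graph $G_{\textbf{B}}$ is a tree a simple path that exits $\textbf{B}$ through a bridge cannot return without repeating a vertex. Hence the ``easy situation'' you describe is the \emph{only} situation, and the proof ends immediately after your second paragraph. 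The paper's one-line proof is precisely this: ``there exists a path $P$ contained in $\textbf{B}$ for which every vertex is in $G[V_t]$'', then invoke Lemma~\ref{lemma43}/Theorem~\ref{teo44} (and implicitly Theorem~\ref{teo40}). Your third and fourth paragraphs, and the anticipated ``delicate point'', can be deleted.
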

\begin{proof}
Since there exist a path $P$ contained in $\textbf{B}$ for which every vertex is in $G[V_t]$, by Lemma \ref{lemma43} and Theorem \ref{teo44} the result follows.
\end{proof}

\begin{corollary}
If $G$ is an edge-block, $G$ is not a cycle and $f_t$ is a no-saturated configuration, then $\Phi[f_t]=S_{V_t}$.
\end{corollary}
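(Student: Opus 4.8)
The plan is to obtain this as an essentially immediate consequence of Theorem \ref{teo43}, which already manufactures every transposition on the labeled vertices; the only work left is to assemble those transpositions into the full symmetric group and to check both inclusions. First I would record the structural reduction: since $G$ is itself an edge-block, it is its own unique edge-block, so we may take $\textbf{B}=G$, and then \emph{every} vertex of $G$—in particular every vertex of $V_t$—lies in this single edge-block. Each vertex of $V_t$ has weight $1$ by definition of $V_t$, and by hypothesis $G$ is not a cycle, so for any two distinct $u,v\in V_t$ the hypotheses of Theorem \ref{teo43} are satisfied. (The no-saturated hypothesis is what guarantees the empty vertices that feed the exchange-vertex constructions underlying Theorem \ref{teo43}, namely Lemma \ref{lemma43}, Theorem \ref{teo44} and Theorem \ref{teo39}.)

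Next I would invoke Theorem \ref{teo43} to conclude that $(uv)\in\Phi[f_t]$ for every pair of distinct $u,v\in V_t$. Because $\Phi[f_t]$ is a group and $S_{V_t}$ is generated by the transpositions of pairs of elements of $V_t$ (every permutation factors as a product of transpositions), closure of $\Phi[f_t]$ under composition forces $S_{V_t}\subseteq\Phi[f_t]$. This is the substantive inclusion, and essentially all of its content has already been discharged inside Theorem \ref{teo43} and the chain of results it rests on; at this level it is a purely group-theoretic assembly step.

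For the reverse inclusion I would note that any $\sigma\in\Phi[f_t]$ satisfies $f_t\circ\sigma\simeq f_t$, hence preserves the support $V_t$ setwise, so its restriction to $V_t$ is a permutation of $V_t$, i.e. an element of $S_{V_t}$; the empty vertices contribute only the factor $S_{V_\emptyset}$ that is present in every Wilson group (cf. Theorem \ref{teo24}). Combining the two inclusions gives $\Phi[f_t]=S_{V_t}$. I do not anticipate a genuine obstacle here: the single non-formal ingredient is the realizability of all transpositions, which is exactly Theorem \ref{teo43}, while verifying that $\textbf{B}=G$ captures all of $V_t$, that transpositions generate $S_{V_t}$, and that movements preserve the support are all routine. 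The one point I would state carefully is the bookkeeping of the empty-vertex factor $S_{V_\emptyset}$, so that the identity $\Phi[f_t]=S_{V_t}$ is understood as the equality of the permutations induced on the labeled vertices.
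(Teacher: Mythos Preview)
Your proposal is correct and matches the paper's approach exactly: the paper states this corollary immediately after Theorem~\ref{teo43} with no separate proof, treating it as an immediate consequence of having every transposition $(uv)$ with $u,v\in V_t$ in $\Phi[f_t]$. Your added care about the $S_{V_\emptyset}$ factor is appropriate, since the paper's own final theorem writes the group as a product with $S_{V_\emptyset}$, so the equality $\Phi[f_t]=S_{V_t}$ should indeed be read on the labeled part.
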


Before to analyzing the bridges of a graph with no-saturated configurations, consider the set of vertices $C_v$ for which $v$ is an exchange-vertex, i.e.,
\[C_v=\{x,y\in V_t\colon v\textrm{ is an exchange-vertex for the pair } \{x,y\} \}.\]

In order to see the relation between $C_v$ and the orbits of $\Phi[f_t]$ we have the following definition and results.

\begin{corollary}
Let $\textbf{B}$ be an edge-block of $G$. If $v$ is an exchange-vertex for some edge of $\textbf{B}$, then $v$ is an exchange-vertex for each edge of $\textbf{B}$.
\end{corollary}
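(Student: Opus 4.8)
The plan is to prove that the property of being an exchange-vertex for \emph{some} edge of an edge-block $\textbf{B}$ propagates to \emph{every} edge of $\textbf{B}$. Suppose $v$ is an exchange-vertex for an edge $xy \in E(\textbf{B})$, and let $x'y'$ be any other edge of $\textbf{B}$. By definition, being an exchange-vertex for $xy$ means there is a neighbor $v_1$ of $v$ and a valid movement $\sigma$ with $\sigma(v)=x$, $\sigma(v_1)=y$ such that $f_t\circ\sigma$ satisfies the hypotheses of Theorem~\ref{teo39} (so in particular $v$ has degree at least $3$, the two required branch sets $B_v$ are nonempty, and $vv_1$ plays the role of the bridge in the star). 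The goal is to produce an analogous valid movement $\sigma'$ carrying the pair $\{v,v_1\}$ onto $\{x',y'\}$, reusing the same structural witness at $v$.

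First I would invoke Theorem~\ref{teo43}: since $x,y,x',y'$ all lie in the same edge-block $\textbf{B}$ and all have weight $1$ (they are endpoints of edges of $\textbf{B}$, which live in $G[V_t]$), every transposition among them lies in $\Phi[f_t]$. In particular the transpositions $(xx')$ and $(yy')$ — or more precisely a permutation $\tau\in\Phi[f_t]$ sending the ordered pair $(x,y)$ to $(x',y')$ within $V(\textbf{B})$ — exist. The key observation is that if $v$ is an exchange-vertex for $\{x,y\}$ via $\sigma$, then $\tau\circ\sigma$ (or $\sigma$ followed by the relabeling induced by $\tau$) is a valid movement sending $v\mapsto x'$ and $v_1\mapsto y'$, and since $\tau\in\Phi[f_t]$ leaves the underlying vertex set $V_t$ (hence all the branch-set data and the bridge structure at $v$) unchanged up to similarity, $f_t\circ(\tau\circ\sigma)$ still satisfies the hypotheses of Theorem~\ref{teo39}. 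Concretely I would compose $\sigma$ with an element of $\Phi[f_t]$ supplied by Theorem~\ref{teo43}, using Proposition~\ref{proposition20} to transport the generated transposition back, so that $v$ witnesses the exchange for $x'y'$ as well.

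The main obstacle I anticipate is bookkeeping the distinction between the \emph{edge} $xy$ actually being an edge of $\textbf{B}$ versus the pair $\{x,y\}$ merely lying in $V(\textbf{B})$: the definition of exchange-vertex is phrased for pairs, and the statement concerns edges, so I must check that the permutation $\tau$ furnished by Theorem~\ref{teo43} can be taken to carry one edge of $\textbf{B}$ to another edge of $\textbf{B}$ (not merely one vertex to another), i.e. that $\textbf{B}$ is \emph{edge-transitive} under $\Phi[f_t]$ in the relevant sense. This should follow because $\Phi[f_t]$ restricted to $V(\textbf{B})$ is either $A_{V(\textbf{B})}$ or $S_{V(\textbf{B})}$ (by Lemma~\ref{lemma34}, or by the preceding corollary when $\textbf{B}$ is not a cycle), both of which act transitively on ordered pairs of distinct vertices as long as $|V(\textbf{B})|\ge 3$, and hence on edges; the degenerate small cases where $\textbf{B}$ is a short cycle must be handled separately but are immediate. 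Once edge-transitivity is in hand, the verification that the Theorem~\ref{teo39} hypotheses persist under the relabeling is routine, since those hypotheses depend only on the local branching structure at $v$, which $\tau$ preserves.
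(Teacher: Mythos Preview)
Your approach is correct and is exactly what the paper intends: the corollary is stated without proof because it follows immediately from Theorem~\ref{teo43}, which puts every transposition of weight-$1$ vertices of $\textbf{B}$ into $\Phi[f_t]$, so composing the given witness $\sigma$ with any $\tau\in\Phi[f_t]$ carrying $(x,y)$ to $(x',y')$ yields a witness for $x'y'$, and the hypotheses of Theorem~\ref{teo39} persist since $f_t\circ\tau\circ\sigma\simeq f_t\circ\sigma$. Your worry about edge-transitivity is unnecessary: Theorem~\ref{teo43} already gives you the full symmetric group on the weight-$1$ vertices of $\textbf{B}$ (not merely $A_V$), which is $2$-transitive on ordered pairs, so the appeal to Lemma~\ref{lemma34} and the handling of ``degenerate small cases'' can be dropped.
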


Since the set of bridges and edge-blocks induces a partition into the set of edges, we can define the following graph.

\begin{definition}
Given a graph $G$, the graph $G_\textbf{B}$ obtained from $G$ by contracting each edge-block into a vertex is called the \emph{edge-block} graph.
\end{definition}

\begin{proposition}
The edge-block graph $G_\textbf{B}$ of a connected graph $G$ is a tree.
\end{proposition}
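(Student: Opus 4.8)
The plan is to verify the two defining properties of a tree separately, connectivity and acyclicity, using the fact recorded just before the statement that the bridges and the edge-blocks of $G$ partition $E(G)$. The immediate consequence I would exploit is that, after contracting each edge-block to a single vertex, the edges surviving in $G_\textbf{B}$ are exactly the bridges of $G$; every edge of $G_\textbf{B}$ is a bridge of $G$ and conversely.

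First I would settle connectivity. Since $G$ is connected and contracting a connected subgraph to a point never disconnects a graph, performing this contraction simultaneously over all edge-blocks leaves $G_\textbf{B}$ connected.

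Next, acyclicity, which I would argue by contradiction. Suppose $G_\textbf{B}$ contains a cycle $(\textbf{b}_1,\textbf{b}_2,\dots,\textbf{b}_\ell)$, where each $\textbf{b}_j$ is a vertex of $G_\textbf{B}$ and consecutive ones are joined by an edge $e_j$; by the observation above each $e_j$ is a bridge of $G$. Lifting to $G$: the vertex $\textbf{b}_j$ is either a contracted edge-block or a single original vertex incident only to bridges, the bridge $e_{j-1}$ attaches to some vertex $x_j$ of $\textbf{b}_j$ and $e_j$ attaches to some vertex $y_j$ of $\textbf{b}_j$, and since the subgraph contracted to $\textbf{b}_j$ is connected there is an $x_jy_j$-path $Q_j$ inside it (trivial when $\textbf{b}_j$ is a single vertex). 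Concatenating $Q_1,e_1,Q_2,e_2,\dots,Q_\ell,e_\ell$ produces a closed walk in $G$ traversing each bridge $e_j$ exactly once, and from it I would extract a genuine cycle of $G$ passing through $e_1$. Invoking the standard characterization that a bridge lies on no cycle, the fact that $e_1$ is a bridge contained in this cycle is a contradiction. Hence $G_\textbf{B}$ has no cycle, and being connected it is a tree.

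The hard part will be the lifting step: one must confirm that the closed walk built from the internal paths $Q_j$ and the bridges $e_j$ really contains a cycle through a bridge rather than degenerating. This is handled by noting that distinct edges of a cycle in $G_\textbf{B}$ are distinct bridges joining distinct contracted vertices, so $e_1,\dots,e_\ell$ are pairwise distinct and no bridge is reused along the walk; extracting a cycle through the edge $e_1$ from a closed walk that uses $e_1$ exactly once is then routine.
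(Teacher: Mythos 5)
Your proof is correct. The paper states this proposition without any proof at all (it is a standard fact about the bridge/edge-block decomposition), so there is nothing to compare against; your argument --- connectivity is preserved under contraction of connected subgraphs, and a cycle in $G_\textbf{B}$ would lift to a closed walk in $G$ traversing a bridge exactly once, whence that edge would lie on a cycle of $G$, contradicting that it is a bridge --- is the standard one and is complete. The only point worth making explicit is that the lifted closed walk also rules out loops and multiple edges in $G_\textbf{B}$ (a loop or a pair of parallel edges between two contracted blocks would likewise put a bridge on a cycle, or contradict maximality of the edge-blocks), but your extraction step already covers these degenerate cases.
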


We denote a vertex of $G_\textbf{B}$ as $[v]$ where $v$ is any vertex of the corresponding edge-block of $G$, i.e., $[v]$ is the equivalence class of vertices of an edge-block of $G$. If the equivalence class is trivial, we use $v$ instead of $[v]$.

For example, if $G$ is a unicyclic graph, the cycle of $G$ is denoted by $[v]$ in $G_\textbf{B}$ but the remainder vertices are denoted $u$ instead of $[u]$ in the edge-block graph of $G$.

\begin{proposition}
The edge $[u][v]$ of $G_\textbf{B}$ is a bridge if and only if $u_1v_1$ is a bridge of $G$ for some $u_1\in [u]$ and $v_1\in [v]$.
\end{proposition}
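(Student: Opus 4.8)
The plan is to reduce the statement to the correspondence between the edges of $G_\textbf{B}$ and the bridges of $G$, using the partition of $E(G)$ into bridges and edge-block edges (recorded just before the definition of $G_\textbf{B}$) together with the preceding proposition that $G_\textbf{B}$ is a tree. The first step is to identify which edges of $G$ survive the contraction. Every non-bridge edge lies on a cycle, hence belongs to a bridgeless connected subgraph and therefore to a (maximal) edge-block; when that edge-block is contracted, both of its endpoints are identified, so the edge becomes a loop and disappears. Conversely, a bridge of $G$ belongs to no edge-block, since every edge of a bridgeless subgraph lies on a cycle and hence is not a bridge of $G$; thus a bridge is never contracted and descends to a genuine edge of $G_\textbf{B}$. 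The conclusion of this step is that the edges of $G_\textbf{B}$ are precisely the images of the bridges of $G$.

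For the converse implication, suppose $u_1v_1$ is a bridge of $G$ with $u_1\in[u]$ and $v_1\in[v]$. I would first argue $[u]\neq[v]$: if $u_1$ and $v_1$ lay in a common edge-block $\textbf{B}$, then, $\textbf{B}$ being connected, there would be a $u_1v_1$-path in $\textbf{B}$, and since $\textbf{B}$ is bridgeless this forces $u_1v_1$ onto a cycle of $G$, contradicting that $u_1v_1$ is a bridge. Hence $[u]\neq[v]$, and by the first step $[u][v]$ is an edge of $G_\textbf{B}$. Since $G_\textbf{B}$ is a tree, each of its edges is a bridge, so $[u][v]$ is a bridge of $G_\textbf{B}$.

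For the direct implication, suppose $[u][v]$ is an edge of $G_\textbf{B}$, which is automatically a bridge because $G_\textbf{B}$ is a tree. As $[u]\neq[v]$, this edge is the image of some edge $u_1v_1$ of $G$ with $u_1\in[u]$ and $v_1\in[v]$; joining two distinct vertex classes, $u_1v_1$ cannot lie inside a single edge-block, so by the partition of $E(G)$ it must be a bridge of $G$, as required. The only delicate point in the whole argument is the first step—verifying that the contraction destroys exactly the edge-block edges and preserves exactly the bridges—and, within it, the observation that any edge whose endpoints share a bridgeless connected subgraph lies on a cycle and hence cannot be a bridge; everything else is routine bookkeeping on the contraction and an appeal to the tree structure of $G_\textbf{B}$.
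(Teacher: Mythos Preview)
Your proof is correct. The paper states this proposition without proof, treating it as a routine observation about the edge-block graph (in the same spirit as the immediately preceding proposition that $G_\textbf{B}$ is a tree, also left unproved). Your argument supplies exactly the standard justification: the partition of $E(G)$ into bridges and edge-block edges, which the paper records just before the definition of $G_\textbf{B}$, means that contraction kills precisely the non-bridge edges, so the edges of $G_\textbf{B}$ are the images of the bridges of $G$; combined with the tree structure of $G_\textbf{B}$, both implications follow. The one point you flag as delicate---that an edge of a bridgeless connected subgraph must lie on a cycle, hence cannot be a bridge of $G$---is indeed the only non-trivial ingredient, and you handle it correctly.
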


\begin{definition}
Let $G_\textbf{B}$ be the edge-block graph of $G$ and $f_t$ a $[k]$-connected configuration. We define a weight function $\omega[f_t] $ such that
\[\omega[f_t]:V(G_\textbf{B})\rightarrow \mathbb{N}\]
and $\omega[f_t]([v])$ is the number of empty vertices in the corresponding edge-block of $v$ in $G$.
\end{definition}

For example, if $f_t$ is a saturated connected configuration over $G$, the weighted function of $G_\textbf{B}$ is zero for any of its vertices. 

Let $[u][v]$ be an edge of $G_\textbf{B}$ and $u_1v_1$ the bridge such that $u_1\in [u]$ and $v_1\in [v]$. Recall that the set of empty vertices en the direction of $u_1$ with respect to $v_1$ is $b_{v_1}(u_1)$.  We denote by $\beta_{[v]}([u])$ to the sum of $\omega[f_t]([x])$ for all $[x]$ in the component of $G_\textbf{B}-[v]$ containing $[u]$.

\begin{lemma}
Let $[u][v]$ be an edge of $G_\textbf{B}$ where $[v]$ is not a trivial equivalence class. If $\beta_{[u]}([v])>0$ then $v_1$ is an exchange-vertex of $u_1v_1$ where $u_1v_1$ is the corresponding bridge for $[u][v]$.
\end{lemma}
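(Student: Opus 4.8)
The plan is to exhibit the star $K_{1,3}$ required by Theorem~\ref{teo39} with center $v_1$, and then read the resulting transposition back through the exchange-vertex bookkeeping of Proposition~\ref{proposition20}. First I would record the structural consequences of the hypothesis that $[v]$ is non-trivial: the class $[v]$ is a genuine edge-block $\textbf{B}$ of $G$ that contains a cycle, so the vertex $v_1\in[v]$ incident with the bridge $u_1v_1$ lies on a cycle $C\subseteq\textbf{B}$. Writing $a,b$ for the two neighbors of $v_1$ along $C$, the vertices $u_1,a,b$ are three distinct neighbors of $v_1$ (the edge $v_1u_1$ is a bridge, hence not a cycle edge), so $\deg(v_1)\ge 3$ and $(\{v_1\},\{u_1,a,b\})$ spans a $K_{1,3}$. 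The decisive geometric observation is that $a$ and $b$ lie on the common cycle $C$ through $v_1$: the arc $C-v_1$ is an $ab$-path avoiding $v_1$, so $a$ and $b$ belong to the same component of $G-v_1$ and therefore $B_{v_1}(a)=B_{v_1}(b)$. Consequently a single empty vertex in that component forces $b_{v_1}(a)=b_{v_1}(b)>0$ simultaneously, which is exactly the pair of inequalities demanded by Theorem~\ref{teo39}.

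Next I would translate the hypothesis $\beta_{[u]}([v])>0$. By definition this is the total $\omega[f_t]$-weight of the component of $G_\textbf{B}-[u]$ containing $[v]$, and since $G_\textbf{B}$ is a tree whose only edge joining the two sides is $[u][v]$, this component consists precisely of the blocks lying on the $v_1$-side of the bridge $u_1v_1$. Hence there is an empty vertex $e$ in the component of $G-\{u_1v_1\}$ containing $v_1$. The remaining task is to manoeuvre, by a valid movement $\sigma$, into a similar configuration $f_s=f_t\circ\sigma$ in which this empty vertex sits in the direction of the cycle $C$, that is $b_{v_1}(a)>0$ in $f_s$, while $\{u_1,v_1\}$ is the pair transported by $\sigma$. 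If $e$ already lies in the component of $G-v_1$ meeting $C$, I may take $\sigma$ to fix $v_1$ and $u_1$; otherwise $e$ is separated from $C$ by $v_1$, and I would feed the path $(d,v_1,a)$, with $d$ the neighbor of $v_1$ toward $e$ so that $b_{v_1}(d)>0$, into Lemma~\ref{lemma38} to obtain a valid elementary movement routing an empty vertex across $v_1$ into $\textbf{B}$.

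Having arranged $b_{v_1}(a)=b_{v_1}(b)>0$ with $v_1,u_1\in V_t$ and $u_1v_1$ a bridge, Theorem~\ref{teo39} yields $(v_1u_1)\in\Phi[f_s]$. Finally, reading $f_s=f_t\circ\sigma$ and applying the conjugation principle of Proposition~\ref{proposition20} (as in the remark following the definition of exchange-vertex, $(xy)=\sigma\circ(v_1u_1)\circ\sigma^{-1}$) identifies $v_1$ as an exchange-vertex for the pair obtained from $\{v_1,u_1\}$ under $\sigma$; choosing $\sigma$ so that this pair is exactly $\{u_1,v_1\}$ gives the claim, since $u_1\sim v_1$.

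I expect the genuine difficulty to be the positioning and bookkeeping of the second paragraph. The clean case, where the empty vertex already sees the cycle $C$, is immediate; the trouble is the ``bowtie'' type situation in which every empty vertex guaranteed by $\beta_{[u]}([v])>0$ is trapped beyond a second bridge incident to $v_1$, so that delivering emptiness into the $\{a,b\}$-component necessarily moves the token on $v_1$. Verifying both that Lemma~\ref{lemma38} can route the empty vertex into $\textbf{B}$, and that the accompanying permutation $\sigma$ can still be taken to exchange exactly $\{u_1,v_1\}$ rather than some conjugate pair, is the delicate point on which the proof hinges.
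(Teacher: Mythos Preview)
Your Case~A argument is correct and is in fact cleaner than the paper's: observing that the two cycle-neighbours $a,b$ of $v_1$ lie in the same component of $G-v_1$ (via the arc $C-v_1$), so that a single empty vertex there forces $b_{v_1}(a)=b_{v_1}(b)>0$, lets you invoke Theorem~\ref{teo39} with $\sigma=\mathrm{id}$ and adjacent vertex $u_1$. The paper instead takes the adjacent vertex to be a cycle-neighbour $v_2$ and builds a nontrivial $\sigma$ with $\sigma(v_1)=u_1$, $\sigma(v_2)=v_1$, which is more work.

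The gap is exactly where you suspect. In Case~B the component of $G-v_1$ containing $a,b$ has no empty vertex, while some other branch (your $d$) does; since every $e\!\to\!\{a\text{-side}\}$ walk in $G$ must pass through $v_1$, any valid movement that delivers emptiness into the cycle side \emph{must} move the label on $v_1$. Concretely, the elementary movement supplied by Lemma~\ref{lemma38} for the path $(d,v_1,a)$ has $\sigma_p(v_1)=a$, so the conjugation $\sigma_p\circ(v_1u_1)\circ\sigma_p^{-1}$ yields $(a\,u_1)$, not $(v_1u_1)$; and no choice of $\sigma$ that fixes both $u_1$ and $v_1$ can route emptiness across $v_1$. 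So your scheme of ``take $\sigma$ fixing $\{u_1,v_1\}$ and then apply Theorem~\ref{teo39} at the star $(v_1;u_1,a,b)$'' cannot be completed in this case.

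The paper resolves Case~B with a device you do not use: since $b_{v_1}(a)=b_{v_1}(b)=0$, the whole cycle $C$ is saturated, and therefore the cycle rotation $\sigma_1=\sigma_C$ is itself a valid elementary movement (it needs no empty vertex). One first rotates $C$ so that $\sigma_1(v_2)=v_1$ for a cycle-neighbour $v_2$; this costs nothing and leaves all $b_{v_1}(\cdot)$ values unchanged. Then, using the empty vertex in the $d$-direction (the paper calls it $w_1$), one takes a path movement $\sigma_2$ through $(\,\cdot\,,w_1,v_1,u_1,\,\cdot\,)$ with $\sigma_2(v_1)=u_1$; this $\sigma_2$ fixes $v_2$. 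The composite $\sigma=\sigma_1\circ\sigma_2$ then satisfies $\sigma(v_1)=u_1$ and $\sigma(v_2)=v_1$, so the adjacent vertex witnessing the exchange-vertex property is $v_2$ rather than $u_1$. The point you were missing is that in the hard case one should give up on keeping $\sigma$ close to the identity and instead exploit the free cycle rotation available precisely because the cycle side is saturated.
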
\label{lemma50}
\begin{proof}
Suppose $u_1,v_1\in V_t$. Since $[v]$ is not a trivial equivalence class, $v_1$ is a vertex of a cycle $C$. Let $v_2$ be another vertex of $C$. If $b_{v_1}(v_2)>0$ then there exists a valid movement $\sigma\in \Gamma[f_t]$ such that $\sigma(v_2)=v_1$ and $\sigma(v_1)=u_1$. Due to the fact that $v_1$ is an exchange-vertex of $v_1v_2$, $v_1$ is an exchange-vertex of $\sigma(v_1)\sigma(v_2)=u_1v_1$, see Figure \ref{fig15} (left).

\begin{figure}[!htbp]
\begin{center}
\includegraphics{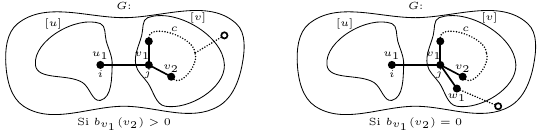}
\caption{If $[v]$ is not a trivial equivalence class, then $v_1$ is an exchange-vertex of $u_1v_1$.}\label{fig15}
\end{center}
\end{figure}

If $b_{v_1}(v_2)=0$, then $C$ is saturated and there is a valid movement $\sigma_1$ such that $\sigma_1(v_2)=v_1$. Since $u_1v_1$ is a bridge and $\beta_{[u]}([v])>0$ there exists a vertex $w_1\notin [v]$ adjacent to $v_1$ such that $b_{v_1}(w_1)>0$. Hence, there is a valid movement $\sigma_2\in \Gamma[f_t\circ \sigma_1]$ such that $\sigma_2(v_1)=u_1$ and $\sigma_2(v_1)$ and the result follows. See Figure \ref{fig15} (right).
\end{proof}

\begin{theorem}\label{teo51}
Let $P=(v_1,v_2,\dots,v_{r+1})$ be a path of $G$ such that each edge is a bridge and $v_1,v_{r+1}\in V_t$. If $[v_1]$ is a no trivial equivalence class of $G_\textbf{B}$ and $r\leq b_{v_2}(v_1)$, then $v_1$ is an exchange-vertex of each edge of $P$.
\end{theorem}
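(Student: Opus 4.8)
The plan is to treat the edges $v_iv_{i+1}$ of $P$ one at a time, for $i=1,\dots,r$, reducing each to the first bridge $v_1v_2$ and then transporting the conclusion back by conjugation. The case $i=1$ is exactly the preceding lemma (Lemma~\ref{lemma50}): since $[v_1]$ is a non-trivial equivalence class and $b_{v_2}(v_1)\ge r\ge 1>0$, the bridge $v_1v_2$ has an empty vertex on its cycle-side, so $v_1$ is an exchange-vertex of $v_1v_2$. Equivalently, $v_1$ has degree at least $3$ (two cycle-neighbours plus $v_2$), and Theorem~\ref{teo39} applies with center $v_1$, bridge-leg $v_2$, and the two cycle-neighbours as the remaining legs.

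For a fixed $i>1$ I would first build a valid movement $\sigma$ that slides the occupied pair sitting on $v_i,v_{i+1}$ down the bridge-path until it occupies $v_1,v_2$, that is, with $\sigma(v_1)=v_i$ and $\sigma(v_2)=v_{i+1}$. To construct $\sigma$ I feed the empty vertices guaranteed by $b_{v_2}(v_1)\ge r$ along $P$ one at a time using Lemma~\ref{lemma38}: applied to a sub-path $(w,v_j,v_{j+1})$ with an empty vertex behind $w$, it produces a valid elementary movement that advances an empty vertex to $v_j$ and shifts the robot on $v_{j+1}$ onto $v_j$; iterating this for $j=i-1,i-2,\dots,1$ brings the target pair onto the first bridge. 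Because advancing the pair one edge consumes one empty vertex and $i-1\le r-1<r\le b_{v_2}(v_1)$, enough empties are always available.

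Once the pair occupies $v_1,v_2$, the configuration $f_t\circ\sigma$ satisfies the hypothesis of Theorem~\ref{teo39} with $v_1$ as center and $v_2$ as the bridge-leg, giving $(v_1v_2)\in\Phi[f_t\circ\sigma]$ with $v_1$ as exchange-vertex of the transported edge. Pulling back by $\sigma^{-1}$ via Proposition~\ref{proposition20} (as in the remark following the definition of exchange-vertex) yields $(v_iv_{i+1})=\sigma\circ(v_1v_2)\circ\sigma^{-1}\in\Phi[f_t]$, witnessing $v_1$ as an exchange-vertex of $v_iv_{i+1}$; ranging over $i$ covers every edge of $P$.

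The main obstacle is verifying that the shuttling movement $\sigma$ can be carried out while keeping every intermediate configuration connected. Since every edge of $P$ is a bridge and the empties start behind the cut-vertex $v_1$, the rerouting must pass through the cycle of the non-trivial block $[v_1]$, and at each application of Theorem~\ref{teo39} one has to recheck that the two star-legs at $v_1$ still satisfy the positivity condition $b_{v_1}(\cdot)>0$ after the earlier shifts. The non-triviality of $[v_1]$ is used precisely here, supplying both the degree-$3$ vertex needed to realize the local transposition and the cyclic room needed to recycle the empty vertices; the hypothesis $r\le b_{v_2}(v_1)$ is what guarantees that this bookkeeping never exhausts the supply of empty vertices before the pair reaches the first bridge.
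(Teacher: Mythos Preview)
Your proposal is correct and follows essentially the same approach as the paper: for each edge $v_iv_{i+1}$ you construct a valid movement $\sigma$ with $\sigma(v_1)=v_i$ and $\sigma(v_2)=v_{i+1}$, check that $f_t\circ\sigma$ still has enough empty vertices on the $[v_1]$-side (the paper records this as $b_{v_2}[f_t\circ\sigma_i](v_1)\ge r+1-i$), and then invoke Lemma~\ref{lemma50}/Theorem~\ref{teo39} at $v_1$. The paper's proof is terser---it simply asserts the existence of $\sigma_i$ and cites Lemma~\ref{lemma50}---whereas you spell out the shuttling via repeated use of Lemma~\ref{lemma38} and discuss the connectivity bookkeeping, but the underlying argument is the same.
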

\begin{proof}
To begin with, observe that each vertex of $P$ has weight 1. For each $i\in [r]$ we have $b_{v_{i+1}}(v_i)\geq r$, then there exists a valid movement $\sigma_i$ such that $\sigma_i(v_1)=v_i$ and $\sigma_i(v_2)=v_{i+1}$. By Lemma \ref{lemma50}, $v_1$ is an exchange-vertex of $\sigma_i(v_1)\sigma_i(v_2)=v_iv_{i+1}$ because $b_{v_2}[f_t\circ\sigma_i]\geq r+1-i$, see Figure \ref{fig16}.

\begin{figure}[!htbp]
\begin{center}
\includegraphics{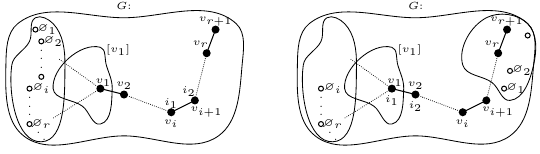}
\caption{If $[v]$ is not a trivial equivalence class, then $v_1$ is an exchange-vertex of $v_iv_{i+1}$.}\label{fig16}
\end{center}
\end{figure}
\end{proof}

\begin{corollary}\label{cor52}
Let $P=(v_1,v_2,\dots,v_{r+1})$ be a path of $G$ such that each edge is a bridge and $v_1,v_{r+1}\in V_t$. If $[v_1]$ is a no trivial equivalence class of $G_\textbf{B}$ and $r\leq b_{v_2}(v_1)$, then $v_1v_{r+1}\in\Phi[f_t]$.
\end{corollary}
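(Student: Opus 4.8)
The plan is to chain together the two immediately preceding results, as this corollary is essentially a direct specialization of Theorem \ref{teo40} once Theorem \ref{teo51} has been applied. First I would invoke Theorem \ref{teo51}: its hypotheses coincide exactly with those of the corollary, namely that each edge of $P$ is a bridge, that the endpoints $v_1,v_{r+1}$ lie in $V_t$, that the class $[v_1]$ is nontrivial, and that $r\leq b_{v_2}(v_1)$. Its conclusion is precisely that $v_1$ is an exchange-vertex of \emph{every} edge $v_iv_{i+1}$ of $P$. Thus, as an immediate consequence, each edge of $P$ admits an exchange-vertex.

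With this in hand, the second step is to apply Theorem \ref{teo40} to the path $P=(v_1,v_2,\dots,v_{r+1})$ itself. Since every edge of $P$ has an exchange-vertex, Theorem \ref{teo40} directly yields $(v_1v_{r+1})\in\Phi[f_t]$, which is the desired conclusion. Internally, the lemma preceding Theorem \ref{teo40} is what promotes each exchange-vertex into the membership $(v_iv_{i+1})\in\Phi[f_t]$, and the telescoping product of transpositions carried out in the proof of Theorem \ref{teo40}, namely $(v_1v_2)\circ(v_2v_3)\circ\cdots\circ(v_rv_{r+1})\circ(v_rv_{r-1})\circ\cdots\circ(v_2v_1)=(v_1v_{r+1})$, assembles these individual transpositions into the single transposition $(v_1v_{r+1})$.

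I expect no genuine obstacle here: the result is obtained purely by composing Theorem \ref{teo51} (which supplies the exchange-vertices along the entire path) with Theorem \ref{teo40} (which converts a path of exchange-vertices into a transposition of its endpoints). The only point requiring a moment's bookkeeping is that the path fed into Theorem \ref{teo40} is literally $P$, and that all the intermediate transpositions $(v_iv_{i+1})$ live in the \emph{single} group $\Phi[f_t]$, so that closure under the group operation guarantees their product — and hence $(v_1v_{r+1})$ — again lies in $\Phi[f_t]$.
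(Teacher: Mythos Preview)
Your proposal is correct and mirrors the paper's own proof exactly: invoke Theorem~\ref{teo51} to obtain that $v_1$ is an exchange-vertex for every edge of $P$, then apply Theorem~\ref{teo40} to conclude $(v_1v_{r+1})\in\Phi[f_t]$. The additional detail you supply about the telescoping product is already contained in the proof of Theorem~\ref{teo40}, so nothing further is needed.
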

\begin{proof}
By Theorem \ref{teo51}, $v_1$ is an exchange-vertex of each edge of $P$. By Theorem \ref{teo40}, the result follows.
\end{proof}

Next, we analyze the case where $[v]$ is a trivial equivalence class, but degree at least 3.

\begin{theorem}\label{teo53}
Let $P=(v_1,v_2,\dots,v_{r})$ be a path of $G$ such that each edge is a bridge and $v_1,v_{r}\in V_t$. If $[v_1]$ is a trivial equivalence class of $G_\textbf{B}$, degree at least 3 and $r\leq b_{v_2}(v_1)$, then $v_1$ is an exchange-vertex of each edge of $\{v_2,v_3,\dots,v_r\}$.
\end{theorem}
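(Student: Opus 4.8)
The plan is to mirror the proof of Theorem~\ref{teo51}, replacing the appeal to Lemma~\ref{lemma50} (which exploited that $[v_1]$ lies on a cycle) by a direct application of Theorem~\ref{teo39} (which exploits only that $v_1$ has degree at least $3$). First I would record the structural facts that drive everything. Because $v_1,v_r\in V_t$ and every edge of $P$ is a bridge, connectivity of $G[V_t]$ forces every internal vertex of $P$ to be occupied, so each $v_j$ has weight $1$. Since $[v_1]$ is a trivial equivalence class, every edge incident to $v_1$ is a bridge; together with $\deg v_1\ge 3$ this gives at least two neighbours of $v_1$ other than $v_2$, all lying in the component $D$ of $G-v_2$ that contains $v_1$. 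By hypothesis $D$ carries at least $r$ empty vertices, since $r\le b_{v_2}(v_1)$.

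Next, fixing an edge $v_iv_{i+1}$ with $2\le i\le r-1$, I would produce a valid movement $\sigma_i$ with $\sigma_i(v_1)=v_i$ and $\sigma_i(v_2)=v_{i+1}$. As in Theorem~\ref{teo51}, one slides the occupied segment $v_1,\dots,v_r$ back by $i-1$ steps into the reservoir $D$; each step is a single valid elementary path-movement supplied by Lemma~\ref{lemma38} (the path runs from an empty vertex of $D$ through $v_1,v_2,\dots$), and it is legitimate because $b_{v_2}(v_1)\ge r>i-1$. Writing $f_s=f_t\circ\sigma_i$, the vertices $v_1,v_2$ are again occupied, the edge $v_1v_2$ is still a bridge, and the count of empties behind $v_1$ has only dropped to $b_{v_2}[f_s](v_1)\ge r+1-i\ge 2$.

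With $f_s$ in hand I would invoke Theorem~\ref{teo39} at the star centred at $v_1$, taking $v_2$ as the distinguished leaf; its conclusion $(v_1v_2)\in\Phi[f_s]$ then yields, via Proposition~\ref{proposition20} and the conjugation identity $\sigma_i\circ(v_1v_2)\circ\sigma_i^{-1}=(\sigma_i(v_1)\,\sigma_i(v_2))=(v_iv_{i+1})$, that $(v_iv_{i+1})\in\Phi[f_t]$, which is exactly the assertion that $v_1$ is an exchange-vertex for $v_iv_{i+1}$. To legitimately apply Theorem~\ref{teo39} I must exhibit \emph{two} distinct neighbours $a,b$ of $v_1$ (necessarily in $D$, hence different from the leaf $v_2$) with $b_{v_1}[f_s](a),b_{v_1}[f_s](b)>0$.

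The hard part will be precisely this last requirement, because the hypothesis only bounds the \emph{total} number of empties in $D$ and they could a priori all sit behind a single neighbour of $v_1$; in Theorem~\ref{teo51} this difficulty evaporated, since the cycle through $v_1$ let Lemma~\ref{lemma50} shuttle an empty to both sides of $v_1$, but here $[v_1]$ is trivial. I would handle it by absorbing an extra redistribution into $\sigma_i$: using the $\ge 2$ empties that remain in $D$ after the slide and the fact that $v_1$ has at least two neighbours in $D$, I would route (again by Lemma~\ref{lemma38}) one robot out of an otherwise-full branch across $v_1$ into a branch that still holds an empty, thereby freeing an empty behind a second neighbour of $v_1$, while arranging the net effect so that $\sigma_i$ still satisfies $\sigma_i(v_1)=v_i$ and $\sigma_i(v_2)=v_{i+1}$. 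Verifying that these path-movements can be chained without ever disconnecting $G[V_t]$ and without disturbing the prescribed images of $v_1$ and $v_2$ is the one genuinely delicate bookkeeping step; the generous reservoir, namely the $\ge 2$ spare empties that the bound $r\le b_{v_2}(v_1)$ guarantees in the range $i\le r-1$, is exactly what makes it possible.
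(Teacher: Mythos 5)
Your proposal is correct and follows essentially the same route as the paper: the paper's (much terser) proof likewise observes that $r\leq b_{v_2}(v_1)$ leaves at least two spare empty vertices behind $v_1$, redistributes them by a valid movement so that two branches at $v_1$ each hold an empty vertex (enabling Theorem~\ref{teo39} in place of Lemma~\ref{lemma50}), and then repeats the sliding-and-conjugating argument of Theorem~\ref{teo51}. The ``delicate bookkeeping'' you flag is exactly the step the paper dispatches in one sentence, so your write-up is, if anything, the more careful of the two.
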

\begin{proof}
Observe that the condition $r\leq b_{v_2}(v_1)$ implies at least two empty vertices in the direction of $v_1$ with respect to $v_2$. Via a valid movement, we can obtain at least two empty vertices in at least one branches at $v_1$. Now, we use exactly the same argument as in Theorem \ref{teo51} and the result follows, see Figure \ref{fig17}.

\begin{figure}[!htbp]
\begin{center}
\includegraphics{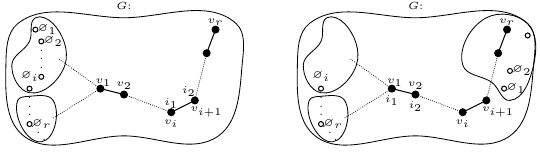}
\caption{If $[v]$ is a trivial equivalence class, then $v_1$ is an exchange-vertex of $v_iv_{i+1}$.}\label{fig17}
\end{center}
\end{figure}
\end{proof}

In the previous proof we remark if $f_t$ has at least two branches at $v_1$ with at least an empty vertex each, then $v_1$ is an exchange vertex of $(v_1v_2)$. See the vertices $[v_{17}]$ and $[v_7]$ of the example of Figure \ref{fig18}.

\begin{corollary}\label{cor54}
Let $P=(v_1,v_2,\dots,v_{r})$ be a path of $G$ such that each edge is a bridge and $v_1,v_{r}\in V_t$. If $[v_1]$ is a trivial equivalence class of $G_\textbf{B}$, degree at least 3 and $r\leq b_{v_2}(v_1)$, then $v_2v_{r}\in\Phi[f_t]$.
\end{corollary}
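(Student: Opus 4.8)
The plan is to treat this statement as the exact analogue of Corollary~\ref{cor52}, replacing the role played there by Theorem~\ref{teo51} with that of Theorem~\ref{teo53}. Indeed, the two corollaries differ only in whether $[v_1]$ is a nontrivial equivalence class (as in Corollary~\ref{cor52}) or a trivial one of degree at least $3$ (as here), and this distinction is precisely what separates Theorem~\ref{teo51} from Theorem~\ref{teo53}. Consequently, the proof should be a two-line reduction to results already established.

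First I would invoke Theorem~\ref{teo53} directly: under the stated hypotheses---each edge of $P$ is a bridge, $v_1,v_r\in V_t$, $[v_1]$ trivial of degree at least $3$, and $r\leq b_{v_2}(v_1)$---the theorem guarantees that $v_1$ is an exchange-vertex for every edge of the subpath $(v_2,v_3,\dots,v_r)$. The key point to observe is that, because $[v_1]$ is a trivial equivalence class, $v_1$ is an exchange-vertex only for the edges lying strictly beyond $v_1$, namely $v_2v_3,v_3v_4,\dots,v_{r-1}v_r$, and not for $v_1v_2$ itself. This is exactly why the conclusion concerns $v_2v_r$ rather than $v_1v_r$.

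Second, I would apply Theorem~\ref{teo40} to the truncated path $(v_2,v_3,\dots,v_r)$. Since each of its edges admits an exchange-vertex (namely $v_1$), Theorem~\ref{teo40} yields $(v_2v_r)\in\Phi[f_t]$, which is the desired conclusion.

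I do not anticipate any genuine obstacle, since all the technical content resides in Theorems~\ref{teo53} and~\ref{teo40}. The only point requiring care is the bookkeeping of indices: one must apply Theorem~\ref{teo40} to the path starting at $v_2$, because the statement that $v_1$ is an exchange-vertex does not extend to the edge $v_1v_2$ when $[v_1]$ is trivial. Overlooking this would incorrectly suggest the stronger conclusion $v_1v_r\in\Phi[f_t]$, which is not what the hypotheses support.
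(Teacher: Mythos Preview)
Your proposal is correct and matches the paper's approach exactly: the paper states Corollary~\ref{cor54} without proof, but the intended argument is the direct analogue of the proof of Corollary~\ref{cor52}, namely invoking Theorem~\ref{teo53} (in place of Theorem~\ref{teo51}) to obtain an exchange-vertex for each edge of $(v_2,\dots,v_r)$ and then applying Theorem~\ref{teo40}. Your care about the index shift---working with the subpath starting at $v_2$ rather than $v_1$---is precisely the point that distinguishes this corollary from Corollary~\ref{cor52}.
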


\begin{theorem}\label{teo55}
Let $[v]$ be a trivial equivalence class of $G_\textbf{B}$ with degree at least 3. If $x\in V_t$ (and $y\in V_t$) such that it is in direction of $v_1$ with respect to $v$ (and $v_2$ with respect to $v$, respectively) and at distance to $v$ less than $r$. If $b_{v_1}(v)\geq r$ (and $b_{v_2}(v)\geq r$) then $(xy)\in\Phi[f_t]$.
\end{theorem}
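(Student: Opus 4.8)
The plan is to express the transposition $(xy)$ as a composition of transpositions of consecutive vertices along a single path from $x$ to $y$, and then to apply Theorem \ref{teo40}. Because $[v]$ is a trivial equivalence class of $G_\textbf{B}$, every edge incident to $v$ is a bridge and $v$ is a cut vertex; since $x$ lies in the direction of $v_1$ and $y$ in the direction of $v_2$ with $v_1\neq v_2$, the vertices $x$ and $y$ lie in different components of $G-v$, so every $x$-$y$ path runs through $v$. I would fix the path $Q=(x,\dots,v_1,v,v_2,\dots,y)$, the concatenation of a shortest $v$-$x$ path and a shortest $v$-$y$ path; each has at most $r$ vertices because $x$ and $y$ are at distance less than $r$ from $v$. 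It then suffices to show that every consecutive transposition along $Q$ belongs to $\Phi[f_t]$.

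For the edges that are interior to a branch, I would invoke Theorem \ref{teo53}. Applying it in the $v_1$-branch with branch vertex $v$, initial step $v_1$, and the path $(v,v_1,\dots,x)$, whose number of vertices is at most $r\le b_{v_1}(v)$, shows that $v$ is an exchange-vertex for every edge of the chain from $v_1$ to $x$; the analogous application using $b_{v_2}(v)\ge r$ does the same for the chain from $v_2$ to $y$. The Lemma preceding Theorem \ref{teo40} then places each of these transpositions in $\Phi[f_t]$. Should a branch happen to traverse an edge-block instead of a pure chain of bridges, each such non-bridge edge still admits an exchange-vertex by Theorem \ref{teo44} (here $G$ is not a cycle, as $\deg v\ge 3$), so every non-central edge of $Q$ is accounted for.

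The two remaining edges are the central bridges $v_1v$ and $vv_2$, and this is the step I expect to be the main obstacle. For them I would use Theorem \ref{teo39}, or equivalently the remark following Theorem \ref{teo53}: $v$ is an exchange-vertex for an edge incident to it once at least two of the branches at $v$ each contain an empty vertex, the third neighbour guaranteed by $\deg v\ge 3$ supplying the extra leaf of the $K_{1,3}$. Since $x$ lies at distance at least $1$ from $v$ we have $r\ge 2$, so $b_{v_1}(v)\ge r\ge 2$ furnishes at least two empty vertices outside the $v_1$-branch, and symmetrically for the $v_2$-branch. The delicate case is when these empties are concentrated in a single other branch; there I would first perform a valid movement $\tau$ that shifts one empty vertex into a second branch at $v$ while fixing $x$ and $y$ (possible because their small distance to $v$ leaves the far empties free to move), establish the transposition in $\Phi[f_t\circ\tau]$, and transport it back to $\Phi[f_t]$ via Proposition \ref{proposition20}, using $\tau(x)=x$ and $\tau(y)=y$ so that the conjugate of $(xy)$ is again $(xy)$.

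Having shown that the transposition of every consecutive pair of $Q$ lies in $\Phi[f_t]$, the composition argument of Theorem \ref{teo40} gives $(xy)\in\Phi[f_t]$. The only genuine bookkeeping is to verify that the empty-vertex budget promised by $b_{v_1}(v),b_{v_2}(v)\ge r$ is never exhausted while carrying out the successive constructions along the two branches and at the junction $v$; the hypothesis that $x$ and $y$ sit at distance less than $r$ from $v$ is precisely what keeps this budget positive at each stage.
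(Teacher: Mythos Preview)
Your overall strategy coincides with the paper's: handle each branch with Theorem~\ref{teo53}/Corollary~\ref{cor54}, then deal with the junction at $v$ by redistributing empty vertices and invoking Theorem~\ref{teo39}. The paper packages the reduction slightly differently, writing
\[
(xy)=(v_1x)\circ(v_2y)\circ(v_1v_2)\circ(v_1x)\circ(v_2y)
\]
so that only the single junction transposition $(v_1v_2)$ remains, rather than the two edges $vv_1$ and $vv_2$ that your Theorem~\ref{teo40} route leaves; but this is cosmetic.

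Where your argument has a genuine gap is the ``delicate case''. You assert a valid movement $\tau$ that pushes an empty vertex into a second branch at $v$ \emph{while fixing $x$ and $y$}, but this need not exist. Take $\deg v=3$ with neighbours $v_1,v_2,v_3$, let $y=v_2$, and suppose every empty vertex outside the $v_1$-branch lies in the $v_3$-branch (this is consistent with $b_{v_1}(v)\ge r$). To place an empty in the $v_2$-direction one must slide a label through $v_2=y$, so $\tau(y)\ne y$; symmetrically, routing the empty into the $v_1$-branch disturbs $x$. The justification ``their small distance to $v$ leaves the far empties free to move'' does not address this, because the obstruction is at the near vertices $v_1,v_2$, not the far ones.

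The paper avoids the difficulty by not insisting that anything be fixed except $v_1$. It chooses $v_3$ with $b_v(v_3)\ge 2$, takes an elementary move $\sigma$ along a path through $v_3,v,v_2$ (so $\sigma(v_3)=v$, $\sigma(v)=v_2$, and $\sigma(v_1)=v_1$ since $v_1$ lies in a different branch), and observes that in $f_t\circ\sigma$ there is now one empty in the $v_2$-direction (the displaced leaf) and still at least one in the $v_3$-direction (since $b_v(v_3)\ge 2$). Hence $v$ is an exchange-vertex for $vv_1$ in $f_t\circ\sigma$, and Proposition~\ref{proposition20} yields $(\sigma(v)\,\sigma(v_1))=(v_2v_1)\in\Phi[f_t]$. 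The key point is that because $(v_1x)$ and $(v_2y)$ were already secured in $\Phi[f_t]$ \emph{before} applying $\sigma$, one is free to let $\sigma$ move $x$, $y$, and even $v$; only $v_1$ needs to stay put, and the path chosen for $\sigma$ guarantees that automatically.
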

\begin{proof}
By Corollary \ref{cor54}, we have $(vv_1),(vv_2)\in\Phi[f_t]$. It remains to prove that $(v_1v_2)\in\Phi[f_t]$ owing to the fact that $(xy)=(v_1x)\circ(v_2y)\circ(v_1v_2)\circ(v_1x)\circ(v_2y)$.

If $vv_1$ and $vv_2$ satisfy the hypothesis of Theorem \ref{teo39}, the result follows. Without loss of generality, $vv_2$ doesn't have such hypothesis. Since $r\geq 2$, there exists a vertex $v_3$ adjacent to $v$  for which $b_v{v_3}\geq 2$. Hence, there is a valid movement $\sigma\Gamma[f_t]$ such that $\sigma(v_3)=v$ and $\sigma(v)=v_2$. Therefore, $b_v[f_t\circ\sigma](v_2)>0$ and $b_v[f_t\circ\sigma](v_3)>0$ and then $v$ is an exchange-vertex of $v_1$ and $v_2$ and finally $(v_1v_2)\in\Phi[f_t]$.
\end{proof}

Through Theorems \ref{teo51} and \ref{teo53} we can determine the vertices of $C_v$ where $[v]$ has degree at least 3 in $G_\textbf{B}$. We need the following useful definition related to $C_v$.

\begin{definition}
Let $[v]$ be a vertex of degree at least 3 of $G_\textbf{B}$.  We denote by $C_{[v]}$ to the subset of vertices of $G$ having the following property: for every pair of their vertices $(x,y)$, the transposition $(xy)$ is in $\Phi[f_t]$ according to Corollaries \ref{cor52} and \ref{cor54} and Theorems \ref{teo43} and \ref{teo55}.
\end{definition}

Figure \ref{fig18} shows the four sets $C_{[v]}$ of a graph $G$. Note that $b_{[v_4]}([v_3])=1$, so $C_{[v_3]}$ contains $v_2$, $v_3$ and $v_4$. Similarly $b_{[v_{10}]}([v_{11}])=2$, so $C_{[v_{11}]}$ contains $v_{9}$, $v_{10}$, $v_{11}$ and $v_{14}$. The vertices $[v_7]$ and $[v_{17}]$ of $G_\textbf{B}$ are trivial equivalence classes. For $C_{[v_7]}$ we have $b_{[v_6]}([v_7])=2$, $b_{[v_8]}([v_7])=1$ and $b_{[v_{15}]}([v_7])=3$ then $v_6\in C_{[v_7]}$ and $v_{15},v_{16}\in C_{[v_7]}$. Moreover, there are two branches at $v_7$ with empty vertices, then $v_7\in C_{[v_7]}$. Finally, $C_{[17]}$ contains $v_{18}$ and $v_{19}$ because $b_{[v_{16}]}([v_{17}])=0$, $b_{[v_{19}]}([v_{17}])=3$ and $b_{[v_{19}]}([v_{17}])=3$.

\begin{figure}[!htbp]
\begin{center}
\includegraphics{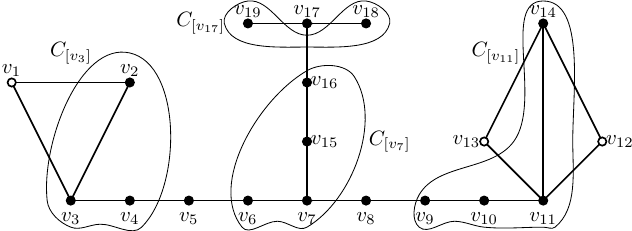}
\caption{The graph $G$ as an example with the set of vertices $C_{[v_3]}$, $C_{[v_7]}$, $C_{[v_{11}]}$ and $C_{[v_{17}]}$.}\label{fig18}
\end{center}
\end{figure}

Note the following facts. $S_{C_{[v]}}\subseteq \Phi[f_t]$ and if $v$ has degree at least 3, then $C_{[v]}\subseteq C_v$. Let $V_{\geq 3}$ be the set of vertices of degree at least 3.

We define the relation $R$ over $V_{\geq 3}$ as follows: 

$vRv$ if and only if there exists a sequence $\{u=u_1,u_2,\dots,u_{r+1}=v\}$ of vertices of $V_{\geq 3}$ such that for each $i\in [r]$, $C_{[u_i]}\cap C_{u_{i+1}}\not = \emptyset$. Clearly, $R$ is a equivalence relation. We denote by $R(v)$ the equivalence class of $v$.

\begin{theorem}
If $v\in V_{\geq 3}$, then \[C_v=\underset{u\in R(v)}{\bigcup}C_{[v]}.\]
\end{theorem}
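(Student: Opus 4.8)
The statement asserts that the set $C_v$ of all partners of $v$ under admissible transpositions equals the union of the sets $C_{[u]}$ over the equivalence class $R(v)$. The plan is to prove set equality by two inclusions, using the fact that $S_{C_{[u]}}\subseteq \Phi[f_t]$ for each $u$ together with the transitivity packaged into the relation $R$. First I would establish the easier inclusion $\bigcup_{u\in R(v)} C_{[u]} \subseteq C_v$. Take any $x$ in some $C_{[u]}$ with $u\in R(v)$; I must show $v$ is an exchange-vertex for the pair $\{x,y\}$ in the sense defining $C_v$, equivalently that $(vx)\in\Phi[f_t]$. Walking along the $R$-chain $\{v=u_1,u_2,\dots,u_{r+1}=u\}$, at each step there is a common vertex $z_i\in C_{[u_i]}\cap C_{u_{i+1}}$; since $(u_i z_i), (z_i\,\text{-type transpositions})$ all lie in $\Phi[f_t]$ by the defining property of the $C_{[u_i]}$ and the hypothesis $S_{C_{[u_i]}}\subseteq\Phi[f_t]$, I can compose transpositions along the chain to produce $(vx)\in\Phi[f_t]$, which places $x$ in $C_v$.

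For the reverse and main inclusion $C_v\subseteq \bigcup_{u\in R(v)} C_{[u]}$, suppose $x\in C_v$, so $(vx)\in\Phi[f_t]$ via $v$ acting as an exchange-vertex. The strategy is to trace which vertex-sets $C_{[u]}$ the transposition $(vx)$ must pass through and to argue that every intermediate high-degree vertex used in realizing $(vx)$ lies in $R(v)$. Concretely, the moves generating $(vx)$ (built from Theorems \ref{teo51}, \ref{teo53}, \ref{teo55} and Corollaries \ref{cor52}, \ref{cor54}) route $x$ toward $v$ along bridges and edge-blocks of the edge-block tree $G_\textbf{B}$; each routing step is governed by some exchange-vertex $u_i$ of degree at least $3$, and consecutive steps share a vertex, giving $C_{[u_i]}\cap C_{u_{i+1}}\neq\emptyset$. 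This exhibits the chain witnessing $u_i\,R\,v$, so $x$ lands in some $C_{[u]}$ with $u\in R(v)$. Because $G_\textbf{B}$ is a tree (by the earlier proposition), the routing path is unique, which keeps the chain of exchange-vertices well-defined and finite.

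The hard part will be the second inclusion, specifically showing that \emph{every} admissible transposition $(vx)\in\Phi[f_t]$ with $x\in C_v$ actually decomposes through the $C_{[u]}$ sets indexed by $R(v)$ rather than reaching $x$ by some route outside this class. The subtlety is that $C_v$ is defined by the existence of \emph{some} valid permutation making $v$ an exchange-vertex, whereas the $C_{[u]}$ are defined by the concrete sufficient conditions of the cited corollaries; I must verify these coincide, i.e.\ that no extra partners arise beyond those certified by Corollaries \ref{cor52} and \ref{cor54} and Theorems \ref{teo43} and \ref{teo55}. I expect this to require a careful case analysis on whether $[v]$ is a trivial equivalence class of $G_\textbf{B}$ and on the distribution of empty vertices $b_{v_2}(v_1)$ governing how far labels can be shuttled along each bridge-path, invoking the tree structure of $G_\textbf{B}$ to guarantee that the orbit of $v$ under $\Phi[f_t]$ decomposes exactly along these edge-block components.
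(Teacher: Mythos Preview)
Your proposal follows essentially the same two-inclusion strategy as the paper: the easy direction is handled by walking along an $R$-chain and composing transpositions, and the hard direction uses the tree structure of $G_{\textbf{B}}$ to argue that any valid movement carrying $v$ to a vertex $v_0$ must pass through the nearest $u_1\in R(v)$, forcing $v_0\in C_{[u_1]}$. The paper phrases the reverse inclusion as a contradiction (pick $u_1\in R(v)$ closest to $[v_0]$ in $G_{\textbf{B}}$ and invoke Theorems~\ref{teo51} and~\ref{teo53}), which is exactly the ``routing'' argument you sketch, read contrapositively.

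One small slip worth flagging: you write that $x\in C_v$ is ``equivalently $(vx)\in\Phi[f_t]$'', but by definition $x\in C_v$ means $v$ is an exchange-vertex for some pair $\{x,y\}$, i.e.\ there is a valid $\sigma$ with $\sigma(v)=x$, $\sigma(v_1)=y$ satisfying the hypotheses of Theorem~\ref{teo39}. This is a structural reachability condition on $v$, not literally the statement $(vx)\in\Phi[f_t]$. The paper's own proof is equally loose on this point (it concludes $v_0\in C_v$ from $(v_0v_r)\in\Phi[f_t]$ and $v_r\in C_v$), so your argument matches the paper's level of rigor; just be aware that the delicate point you correctly identify---that the sufficient conditions of Corollaries~\ref{cor52}, \ref{cor54} and Theorems~\ref{teo43}, \ref{teo55} are also \emph{necessary} for $v$ to reach a given vertex---is precisely what both your sketch and the paper's proof leave implicit.
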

\begin{proof}
Let $u\in R(v)$ be and a sequence $\{u=u_1,u_2,\dots,u_{r+1}=v\}$ of vertices of $V_{\geq 3}$ such that for each $i\in [r]$, $C_{[u_i]}\cap C_{u_{i+1}}\not = \emptyset$. Let $v_i\in C_{[u_i]}$
Let $v_0\in C_{[u]}$ then for each $j\in [r]$, we have $(v_{j-1}v_j)\in \Phi[f_t]$. Then, $(v_0v_r)\in \Phi[f_t]$. Since $v_r\in C_{[v]}\subseteq C_v$, then $v_0\in C_v$ and $C_{[u]}\subseteq C_v$.

Now, suppose that $v_0\in C_v$ and $v_0\in C_{[u]}$ for all $u\in R(v)$. Let $u_1\in R(u)$ such that $[u_1]$ is the closest vertex to $[v_0]$ in $G_\textbf{B}$. Then $C_{[u_1]}\cap C_{[v_0]}=\emptyset$. Since $v_0\in C_v$ there exist valid movements $\sigma$ and $\sigma_1$ such that $\sigma(v)=v_0$ and $\sigma_1(u_1)=v_0$ but it is not possible according to Theorems \ref{teo51} and \ref{teo53} because $v_0$ would have to be a vertex of $C_{u_1}$.
\end{proof}

Finally, note that if $x,y\in C_v$, then $(xy)\Phi[f_t]$ and then $S_{C_v}\subset \Phi[f_t]$. In consequence, we can prove the main theorem for no-saturated configurations.

\begin{theorem}
Let $R(v_1),R(v_2),\dots,R(v_m)$ be the equivalence classes of $R$. Then \[\Phi[f_t]=\underset{i=1}{\overset{m}{\prod}}S_{C_{v_{i}}}\times S_{V_\emptyset}.\]
\end{theorem}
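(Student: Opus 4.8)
The plan is to establish the two inclusions $\prod_{i=1}^{m}S_{C_{v_i}}\times S_{V_\emptyset}\subseteq\Phi[f_t]$ and $\Phi[f_t]\subseteq\prod_{i=1}^{m}S_{C_{v_i}}\times S_{V_\emptyset}$ separately, after first recording that the supports $C_{v_1},\dots,C_{v_m},V_\emptyset$ are pairwise disjoint subsets of $V(G)$. Disjointness from $V_\emptyset$ is immediate, since every $C_{v_i}\subseteq V_t$. For $C_{v_i}\cap C_{v_j}=\emptyset$ when $i\neq j$, I would argue by contradiction: a common vertex $z$ lies in some $C_{[u]}$ with $u\in R(v_i)$ and in some $C_{[u']}$ with $u'\in R(v_j)$ by the previous theorem, whence $C_{[u]}\cap C_{u'}\neq\emptyset$ (using $C_{[u']}\subseteq C_{u'}$ for $u'\in V_{\geq 3}$), forcing $uRu'$ and thus $R(v_i)=R(v_j)$, a contradiction. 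This disjointness is exactly what makes the external direct product meaningful as a subgroup of the symmetric group on $V(G)$.

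For the inclusion $\supseteq$, I would first recall that $S_{V_\emptyset}\subseteq\Phi[f_t]$ (already established) and that, for each $i$, every transposition $(xy)$ with $x,y\in C_{v_i}$ belongs to $\Phi[f_t]$ by the very definition of $C_{[v]}$ together with Corollaries \ref{cor52} and \ref{cor54} and Theorems \ref{teo43} and \ref{teo55}; since transpositions generate the symmetric group, $S_{C_{v_i}}\subseteq\Phi[f_t]$. Because these subgroups have pairwise disjoint supports, they commute elementwise and intersect trivially, so they generate their (internal) direct product; as $\Phi[f_t]$ is a group, the whole product $\prod_{i}S_{C_{v_i}}\times S_{V_\emptyset}$ is contained in it.

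The reverse inclusion is the substantive part, and I would phrase it through the orbits of the action $\varphi$. Since every $\sigma\in\Phi[f_t]$ satisfies $f_t\simeq f_t\circ\sigma$, it preserves the occupied set, so $\sigma(V_\emptyset)=V_\emptyset$ and $\sigma(V_t)=V_t$; combined with $S_{V_\emptyset}\subseteq\Phi[f_t]$, this makes $V_\emptyset$ a single orbit. It then suffices to prove that the orbit $\Phi[f_t]x$ of each $x\in V_t$ is exactly $C_{v_i}$ when $x\in C_{v_i}$, and $\{x\}$ otherwise: granting this, any $\sigma\in\Phi[f_t]$ maps each block $C_{v_i}$ onto itself and fixes every remaining vertex of $V_t$, hence lies in $\prod_i S_{C_{v_i}}\times S_{V_\emptyset}$. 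The inclusion $C_{v_i}\subseteq\Phi[f_t]x$ is already supplied by $\supseteq$ (the transpositions act transitively on $C_{v_i}$), so the only remaining point is the upper bound $\Phi[f_t]x\subseteq C_{v_i}$.

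The hard part will be precisely this upper bound: ruling out a valid return-movement that transports a robot out of its block. The invariant I would exploit is that the edge-block graph $G_\textbf{B}$ is a tree, so any route between two edge-blocks crosses a uniquely determined sequence of bridges, and a robot can traverse such a bridge only when the empty-space bounds $r\le b_{v_2}(v_1)$ of Theorems \ref{teo51} and \ref{teo53} hold. A vertex $z\notin C_{v_i}$ is separated from $x$ in $G_\textbf{B}$ by a bridge across which these bounds fail, and the argument in the second half of the previous theorem shows that reaching such a $z$ would force $z\in C_{u}$ for the nearest exchange-vertex $u$, contradicting $z\notin\bigcup_{u\in R(v_i)}C_{[u]}=C_{v_i}$. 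The genuine obstacle is promoting this transposition-level obstruction to every \emph{composite} valid movement rather than a single elementary exchange; I expect this to require tracking the empty vertices along the critical bridge throughout the entire sequence, rather than verifying the constraint at one step.
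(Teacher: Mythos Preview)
Your handling of disjointness and of the inclusion $\prod_i S_{C_{v_i}}\times S_{V_\emptyset}\subseteq\Phi[f_t]$ is correct and matches the paper. The gap is entirely in the reverse inclusion. You correctly isolate the obstacle---promoting the bridge obstruction from elementary moves to arbitrary composite elements of $\Phi[f_t]$---but you do not close it; ``tracking the empty vertices along the critical bridge throughout the entire sequence'' is a plan, not an argument. Worse, the dynamical picture is misleading: during an intermediate configuration a label \emph{can} cross the critical bridge (the empty-vertex counts change along the sequence), so there is no single static inequality that blocks the crossing. What must be shown is only that once the sequence returns to a configuration similar to $f_t$, the \emph{net} permutation respects the blocks, and that is not something a step-by-step invariant readily captures.

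The paper sidesteps all of this with a short algebraic manoeuvre inside the group $\Phi[f_t]$. Assume $\sigma\in\Phi[f_t]$ sends $x\in C_{v_i}$ to $y\in C_{v_j}$ with $i\neq j$, and choose edges $xw$, $yz$ admitting $v_i$, $v_j$ as exchange-vertices, so that $(xw),(yz)\in\Phi[f_t]$. If in addition $\sigma(w)=z$, then the exchange-vertex property of $v_i$ for $xw$, read in the similar configuration $f_t\circ\sigma$, transports to the edge $yz$ in $f_t$; hence $y\in C_{v_i}$, contradicting disjointness. If $\sigma(w)\neq z$, one replaces $\sigma$ by
\[
\sigma_1:=\sigma\circ(xw)\circ\sigma^{-1}\circ(yz)\circ\sigma\circ(xw)\in\Phi[f_t],
\]
which one checks satisfies $\sigma_1(x)=y$ and $\sigma_1(w)=z$, reducing to the first case. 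This conjugation trick is the idea your proposal is missing; it converts the problem from analysing an arbitrary valid sequence to a two-line computation in $\Phi[f_t]$.
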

\begin{proof}
By construction, $S_{V_\emptyset},S_{C_v}\leq \Phi[f_t]$, then $\underset{i=1}{\overset{m}{\prod}}S_{C_{v_{i}}}\times S_{V_\emptyset}\leq \Phi[f_t].$

Let $\sigma\in\Phi[f_t]$ be such that $\sigma\notin \underset{i=1}{\overset{m}{\prod}}S_{C_{v_{i}}}$, i.e., $\sigma(x)=y$ for $x\in C_{v_i}$ and $y\in C_{v_j}$ with $i\not = j$. The vertices $v_i$ and $v_j$ are exchange-vertices for some edges $xw$ and $yz$, respectively, then $(xw),(yz)\in\Phi[f_t]$.

On the other hand, if $\sigma(w)=z$ then $\sigma\in \Phi[f_t]$ and $v_i$ is an exchange-vertex for $xw$ in $f_t\circ\sigma$, then $v_i$ is an exchange-vertex for $yz$ in $f_t$. Hence, $y\in C_{v_i}$. A contradiction since $C_{v_i}\cap C_{v_j}=\emptyset$.

Now, suppose $\sigma(w)\not = z$, then $\sigma^{-1}(z)\not = w$. And we have $\sigma(x)=y$ then $\sigma^{-1}(z)\not =x$ and $\sigma(w)\not=y$. Therefore, $\sigma_1\colon=\sigma\circ(xw)\circ\sigma^{-1}\circ(yz)\circ\sigma\circ (xw)\in\Phi[f_t]$ and satisfies that $\sigma_1(x)=y$ and $\sigma_1(y)=z$. As before, this is a contradiction. Then $\sigma\in\underset{i=1}{\overset{m}{\prod}}S_{C_{v_{i}}}$ and $\Phi[f_t]\leq \underset{i=1}{\overset{m}{\prod}}S_{C_{v_{i}}}\times S_{V_\emptyset}$.
\end{proof}

To end this section, Figure \ref{fig18} shows a graph with a no-saturated configuration $f_t$ for which its Wilson group is $S_{C_{v_3}}\times S_{C_{v_7}}\times S_{C_{v_{11}}}\times S_{C_{v_{17}}} \times S_{V_\emptyset}$.

\section*{Acknowledgments}

Part of this work is included in the undergraduate thesis \cite{A} of one of the authors.

C.  Rubio-Montiel was partially supported by PAIDI grant 007/21.

The authors wish to thank the anonymous referees of this paper for their suggestions and remarks.



\begin{thebibliography}{10}

\bibitem{A}
A.~Atilano.
\newblock {\em Keeping connectivity in simple graphs (In spanish)}.
\newblock TESIUNAM, 2022.
\newblock Thesis (BSc.)--National Autonomous University of Mexico.

\bibitem{MR1822278}
V.~Auletta and P.~Persiano.
\newblock Optimal pebble motion on a tree.
\newblock {\em Inform. and Comput.}, 165(1):42--68, 2001.

\bibitem{CKLL}
A.~Cornejo, F.~Kuhn, R.~Ley{-}Wild, and N.~A. Lynch.
\newblock Keeping mobile robot swarms connected.
\newblock In Idit Keidar, editor, {\em Distributed Computing, 23rd
  International Symposium, {DISC} 2009, Elche, Spain, September 23-25, 2009.
  Proceedings}, volume 5805 of {\em Lecture Notes in Computer Science}, pages
  496--511. Springer, 2009.

\bibitem{MR4036097}
E.~D. Demaine, S.~P. Fekete, P.~Keldenich, H.~Meijer, and C.~Scheffer.
\newblock Coordinated motion planning: reconfiguring a swarm of labeled robots
  with bounded stretch.
\newblock {\em SIAM J. Comput.}, 48(6):1727--1762, 2019.

\bibitem{MR2889522}
S.~Fujita, T.~Nakamigawa, and T.~Sakuma.
\newblock Colored pebble motion on graphs.
\newblock {\em European J. Combin.}, 33(5):884--892, 2012.

\bibitem{MR2672474}
G.~Goraly and R.~Hassin.
\newblock Multi-color pebble motion on graphs.
\newblock {\em Algorithmica}, 58(3):610--636, 2010.

\bibitem{kornhauser1984coordinating}
D.~Kornhauser, G.~Miller, and P.~Spirakis.
\newblock Coordinating pebble motion on graphs, the diameter of permutation
  groups, and applications.
\newblock In {\em 25th Annual Symposium on Foundations of Computer Science,
  1984}, pages 241--250. IEEE, 1984.

\bibitem{RW}
D.~Ratner and M.~K. Warmuth.
\newblock Finding a shortest solution for the $n\times n$ extension of
  the 15-puzzle is intractable.
\newblock In Tom Kehler, editor, {\em Proceedings of the 5th National
  Conference on Artificial Intelligence. Philadelphia, PA, USA, August 11-15,
  1986. Volume 1: Science}, pages 168--172. Morgan Kaufmann, 1986.

\bibitem{MR1307623}
J.~J. Rotman.
\newblock {\em An introduction to the theory of groups}, volume 148 of {\em
  Graduate Texts in Mathematics}.
\newblock Springer-Verlag, New York, fourth edition, 1995.

\bibitem{MR0332555}
R.~M. Wilson.
\newblock Graph puzzles, homotopy, and the alternating group.
\newblock {\em J. Combinatorial Theory Ser. B}, 16:86--96, 1974.

\end{thebibliography}
\end{document}